\newcommand{\sspace}{\mathcal{X}}
\newcommand{\sspacealt}{\mathcal{Y}}
\newcommand{\ssvar}{x}
\newcommand{\ssvaralt}{y}
\newcommand{\ssvarn}[1]{\ssvar_{#1}}
\newcommand{\pspace}{\Omega}
\newcommand{\pspacealt}{\widetilde{\Omega}}
\newcommand{\psvar}{\theta}
\newcommand{\prps}{\Theta}
\newcommand{\prss}{X}
\newcommand{\pmap}[1]{p(\cdot \hspace{-1mm}\mid\hspace{-1mm} {#1})}
\newcommand{\pmapalt}[1]{q(\cdot \hspace{-1mm}\mid\hspace{-1mm} {#1})}
\newcommand{\pmapi}[2]{p_{#1}(\cdot \hspace{-1mm}\mid\hspace{-1mm} {#2})}
\newcommand{\pmapws}[1]{p(\cdot \mid {#1})}
\newcommand{\pmape}[2]{p(#1 \hspace{-1mm}\mid\hspace{-1mm} #2)}
\newcommand{\pmapalte}[2]{q(#1 \hspace{-1mm}\mid\hspace{-1mm} #2)}
\newcommand{\fmape}[2]{f(#1 \hspace{-1mm}\mid\hspace{-1mm} #2)}
\newcommand{\gmape}[2]{g(#1 \hspace{-1mm}\mid\hspace{-1mm} #2)}
\newcommand{\hmape}[2]{h(#1 \hspace{-1mm}\mid\hspace{-1mm} #2)}
\newcommand{\Gmape}[2]{G(#1 \hspace{-1mm}\mid\hspace{-1mm} #2)}
\newcommand{\Hmape}[2]{H(#1 \hspace{-1mm}\mid\hspace{-1mm} #2)}
\newcommand{\pmapews}[2]{p(#1 \mid #2)}
\newcommand{\prior}{p}
\newcommand{\prioralt}{q}
\newcommand{\prodspace}{S}
\newcommand{\prodm}{\mu}
\newcommand{\prodmalt}{\nu}
\newcommand{\post}[1]{\prior(\cdot \hspace{-1mm}\mid\hspace{-1mm} #1)}
\newcommand{\postalt}[1]{\prioralt(\cdot \hspace{-1mm}\mid\hspace{-1mm} #1)}
\newcommand{\postws}[1]{\prior(\cdot \mid #1)}
\newcommand{\poste}[2]{\prior(#1 \hspace{-1mm}\mid\hspace{-1mm} #2)}
\newcommand{\postalte}[2]{\prioralt(#1 \hspace{-1mm}\mid\hspace{-1mm} #2)}
\newcommand{\ssfilt}[1]{\mathscr{F}_{#1}}
\theoremstyle{definition}
\newtheorem{thm}{Theorem}[section]
\newtheorem{prop}[thm]{Proposition}
\newtheorem{defn}[thm]{Definition}
\newtheorem{ex}[thm]{Example}
\newtheorem{rmk}[thm]{Remark}
\newtheorem*{claim*}{Claim}
\numberwithin{equation}{section}
\begin{document}


\title[Schnorr randomness and Effective Bayesian Consistency]
{Schnorr randomness and Effective Bayesian Consistency and Inconsistency}

\author{Simon M. Huttegger}

\address{Department of Logic and Philosophy of Science \\ 5100 Social Science Plaza \\ University of California, Irvine \\ Irvine, CA 92697-5100, U.S.A.}

\email{shuttegg@uci.edu}

\urladdr{http://faculty.sites.uci.edu/shuttegg/}

\author{Sean Walsh}

\address{Department of Philosophy \\ University of California, Los Angeles \\ 390 Portola Plaza, Dodd Hall 321 \\ Los Angeles, CA 90095-1451}

\email{walsh@ucla.edu}

\urladdr{http://philosophy.ucla.edu/person/sean-walsh/}

\author{Francesca Zaffora Blando}

\address{Department of Philosophy\\ Carnegie Mellon University \\Baker Hall 161\\
5000 Forbes Avenue\\ Pittsburgh, PA 15213}

\email{fzaffora@andrew.cmu.edu}

\urladdr{https://francescazafforablando.com/}

\subjclass[2010]{Primary 03D32 Secondary: 03F60, 60A10, 62G20}

\date{\today}

\begin{abstract}
\makeatletter\phantomsection\def\@currentlabel{(abstract)}\makeatother
We study Doob's Consistency Theorem and Freedman's Inconsistency Theorem from the vantage point of computable probability and algorithmic randomness. We show that the Schnorr random elements of the parameter space are computably consistent, when there is a map from the sample space to the parameter space satisfying many of the same properties as limiting relative frequencies. We show that the generic inconsistency in Freedman's Theorem is effectively generic, which implies the existence of computable parameters which are not computably consistent. Taken together, this work provides a computability-theoretic solution to Diaconis and Freedman's problem of ``know[ing] for which [parameters] $\psvar$ the rule [Bayes' rule] is consistent'' (\cite[4]{Diaconis1986-us}), and it strengthens recent similar results of Takahashi \cite{Takahashi2023-rl} on Martin-L\"of randomness in Cantor space.
\end{abstract}

\maketitle


\section{Introduction}\label{sec:intro}

A statistical model consists of a sample space and a collection of probability distributions on the sample space, one of which is hypothesized to be the data-generating process. The distributions are often indexed by a parameter space, which is an assumption that we will make throughout this paper. In a setting where the number of sample observations grows without bound, statisticians consider the question of whether estimates are {\it consistent}, where, loosely speaking, an estimate is consistent if it is guaranteed to identify the true parameter.

Consistency is of major importance in classical statistics. For instance, Barnett \cite[p. 135]{Barnett1982} remarks that ``[c]onsistency is generally regarded as an essential property of a reasonable estimator.'' As Diaconis and Freedman \cite{Diaconis1986-us} point out, it also plays an important role in Bayesian statistics and thus constitutes a point of contact between classical and Bayesian statistics. 

Suppose we have a prior probability measure on the parameter space. Doob \cite{Doob1949-hg} showed that, under mild conditions, the posteriors are consistent for {\it almost all} parameters\textemdash that is, with increasing sample size, the posterior concentrates around the true parameter whenever it is in a set of prior probability one. Further, consistency can often be achieved for {\it all} parameters, in the finite-dimensional parametric case (\cite{Freedman1963-aa,schwartz1965bayes}).

Things are much more complicated in the nonparametric case. Freedman demonstrated that inconsistency can be {\it topologically generic} in this setting \cite{Freedman1965-aa}. The literature following Freedman has generalized this result, establishing the pervasiveness of inconsistency for nonparametric models (\cite{Diaconis1986-us}).

In the present paper, we investigate statistical consistency from the point of view of computable probability theory and algorithmic randomness (see \cite{Gacs2005}, \cite{Hoyrup2009-pl}, \cite{Rut:2013}, \cite{Hoyrup2021-qj}, and \cite{Huttegger2024-zb}).
Before stating our main results, we begin by recalling elements of the classical statistical setup in more detail.\footnote{The notation for the different parts of the classical setup can vary between authors. We are broadly following the notation set out in Chapter 1 of \cite{Schervish2012-cn}.}

\emph{Parameter space and prior}. We suppose that the parameter space $\pspace$ is a Polish space and that the prior $\prior$ is a probability measure on $\pspace$.

\emph{Sample space}. We suppose that the sample space $\sspace$ is a closed subset of Baire space~$\mathbb{N}^{\mathbb{N}}$, and hence is representable as the set of paths $[T]$ through a tree $T\subseteq \mathbb{N}^{<\mathbb{N}}$. As Diaconis and Freedman say: ``all the paradoxes of inconsistency already appear'' in the case of this kind of sample space.\footnote{\cite[2]{Diaconis1986-us}. It is not obvious whether our results generalize to the sample space being $\mathbb{R}^{\mathbb{N}}$.} We use $\ssvarn{n}$ for the finite sequence of natural numbers which results from restricting the element $\ssvar$ of the sample space to its first $n$ bits.\footnote{One often writes $\ssvarn{n}$ as $\ssvar\upharpoonright n$. The notation $\ssvarn{n}$ makes vivid the statistical interpretation.} Each string of natural numbers $\sigma$ in $T$ of length $n$ determines a basic clopen $[\sigma]=\{\ssvar\in \sspace: \ssvarn{n}= \sigma\}$ subset of the sample space $\sspace$.  

\emph{Likelihood}. We suppose that the likelihood $\psvar\mapsto \pmap{\psvar}$ is a measurable map from the parameter space $\pspace$ to the Polish space $\mathcal{M}^+(\sspace)$ of finite Borel measures on the sample space $\sspace$, and that, for $\prior$-a.s. many $\psvar$, one has that $\pmap{\psvar}$ is an element of the Polish space $\mathcal{P}(\sspace)$ of Borel probability measures on~$\sspace$. Since any element $\nu$ of $\mathcal{M}^+(\sspace)$ is determined by its action on the clopens, we abbreviate $\nu([\sigma])$ as $\nu(\sigma)$.

\emph{Product space and joint distribution}. The prior and the likelihood induce a probability measure $\prodm$, called the \emph{joint distribution}, on the product space $\prodspace=\pspace\times \sspace$ given by the formula $\prodm(A\times B) = \int_A \pmape{B}{\psvar} \; d\prior(\psvar)$ on rectangles. For a $\prodm$-integrable function $f:S\rightarrow \mathbb{R}$, one has the following formula for the integral: 
\begin{equation}\label{eqn:formula:int:mu}
\int_{S} f(\psvar,\ssvar) \; d\prodm(\psvar,\ssvar) = \int_{\pspace} \int_{\sspace} f(\psvar,\ssvar) \; d\pmap{\psvar}(\ssvar) \; d\prior(\psvar)
\end{equation} Finally, the joint distribution $\prodm$ induces a pushforward $\prodm_{\prss}$ on the sample space $\sspace$. Its pushforward on the parameter space $\pspace$ is just the prior $\prior$.

\emph{Posterior}. We define the posterior as a map $\sigma \mapsto \postws{\sigma}$ from the tree $T$ to the Polish space $\mathcal{M}^+(\pspace)$ of finite Borel measures on the parameter space $\pspace$ by
\begin{equation}\label{eqn:post}
 \poste{A}{\sigma} = \frac{\int_A \pmape{\sigma}{\psvar}\; d\prior(\psvar)}{\int_{\pspace} \pmape{\sigma}{\psvar}\; d\prior(\psvar)}
\end{equation}
when the denominator is non-zero, and we assume that the posterior is zero otherwise. For each $n\geq 0$, the posterior induces a map $\ssvar\mapsto \post{\ssvarn{n}}$ from the sample space $\sspace$ to $\mathcal{M}^+(\pspace)$. For $\prodm_{\prss}$-a.s. many $\ssvar$ from $\sspace$, one can show that $\post{\ssvarn{n}}$ is in $\mathcal{P}(\pspace)$, the space of probability measures on the parameter space (cf. Proposition~\ref{prop:whenpostprob2}).

\emph{Relation to discrete forms of Bayes' Theorem}. In the case where the parameter space $\pspace$ is countable and we focus on $\sigma=\ssvarn{n}$, if we momentarily abuse notation by identifying $\psvar$ with its singleton event $A=\{\psvar\}$, then (\ref{eqn:post}) reduces to the familiar discrete form $ \poste{\psvar}{\ssvarn{n}} = \frac{\pmapews{\ssvarn{n}}{\psvar} \cdot \prior(\psvar)}{\sum_{\psvar\in \pspace} \pmapews{\ssvarn{n}}{\psvar} \cdot \prior(\psvar)}$ of Bayes' Theorem.

\begin{figure}
\begin{center}
\begin{equation*}
\xymatrix{
 & & & & \mathcal{M}^+(\sspace)\\
(\pspace, \prior) \ar@/^2pc/@{->}[urrrr]_{\psvar\mapsto \pmapws{\psvar}} \ar@{->}[d]_{\psvar\mapsto \delta_{\psvar}}& & (\prodspace, \prodm) \ar@{->}[ll]_{\prps} \ar@{->}[rr]^{\prss} & & (\sspace, \prodm_{\prss}) \ar@{->}[u]_{\ssvar\mapsto \delta_{\ssvar}}  \ar@/^2pc/@{->}[dllll]_{\ssvar\mapsto \postws{\ssvarn{n}}} \\
\mathcal{M}^+(\pspace) & & & & 
}
\end{equation*}
\caption{The five spaces and maps between them.}
\label{fig:generalsetup}
\end{center}
\end{figure}
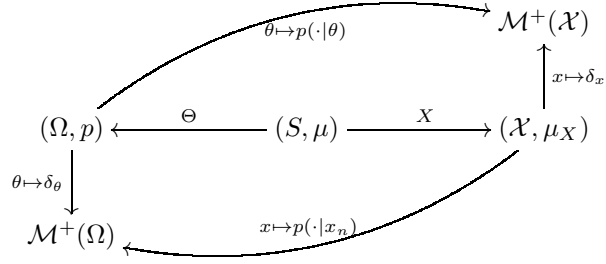

\emph{Diagram of the spaces}. The situation is then as depicted in Figure~\ref{fig:generalsetup}, where the $\sigma$-algebras on all five Polish spaces is the Borel $\sigma$-algebra, and where $\prodm_{\prss}$ refers again to the pushforward of $\prodm$ onto the sample space. We name the two projections $\prps:\prodspace\rightarrow \pspace$ and $\prss:\prodspace\rightarrow \sspace$. This diagram can help keep track of what is meant by the expression $p(\cdot \mid \cdot)$, as can our prior experience with Bayes' Theorem. Note that, in our notation, $p(\cdot \mid \cdot)$ does \emph{not} mean the ordinary conditional probability with respect to the prior.

Our goal is to study this setup within the framework of computable probability theory and effective analysis. We make the following assumptions:\footnote{For computable Polish spaces and computable probability measures on them, see again the sources \cite{Gacs2005}, \cite{Hoyrup2009-pl}, \cite{Rut:2013}, \cite{Hoyrup2021-qj}, and \cite{Huttegger2024-zb}.}
\begin{defn}[Effective assumptions]\label{defn:assum}
\text{}
\begin{enumerate}[leftmargin=*]
    \item \label{assum2} The parameter space $\pspace$ is a computable Polish space, and the prior $\prior$ is a computable probability measure on $\pspace$.
    \item \label{assum1} The sample space $\sspace$ is the set of paths $[T]$ through a computable tree $T\subseteq \mathbb{N}^{<\mathbb{N}}$ which has no dead ends. 
   \item \label{assum2.5} The likelihood $\psvar\mapsto \pmape{\sigma}{\psvar}$ is $L_1(\prior)$ computable, uniformly in $\sigma$ from $T$; and $\pmap{\psvar}$ is in $\mathcal{P}(\sspace)$ for $\prior$-a.s. many $\psvar$ in $\pspace$.
   \end{enumerate}
\end{defn}

Together, (\ref{assum2})-(\ref{assum2.5}) suffice to ensure that the joint distribution $\prodm$ is a computable probability measure on the product space $\prodspace$ (cf. Proposition~\ref{prop:prodmcomputable}). In (\ref{assum1}), having no dead ends provides a natural countable dense set\textemdash namely, the leftmost path above each $\sigma$ in $T$\textemdash and so ensures that the sample space $\sspace=[T]$ is a computable Polish space. This is also natural on the statistical interpretation, since finite samples can always be continued. 

In (\ref{assum2.5}), if $r$ is a computable real $\geq 1$, then, by $L_r(\prior)$ computable, we just mean a computable point of the computable Polish space $L_r(\prior)$ of Borel measurable functions whose $r$-th power is also integrable (the functions in $L_r(\prior)$ being identified when they are $\prior$-a.s equal). Its metric is given by the $L_r(\prior)$ norm $\|f\|_{L_r(\prior)}$\textemdash namely, the $r$-th root of $\int_{\pspace} \left|f\right|^r\; d\prior$\textemdash and its countable dense set is a certain set of rational-valued step functions with events whose $\prior$-measure can be computed.\footnote{The events are those from the algebra generated by a $\prior$-computable basis, see Definition~\ref{defn:nucomputablebasis}. For more on $L_r(\prior)$ as a computable Polish space, see \cite[\S{2.3}]{Huttegger2024-zb}.} 

A related notion is that of an \emph{$L_r(\prior)$ Schnorr test}, which is a function $g:\pspace\rightarrow [0,\infty]$ whose $L_r(\prior)$ norm $\|g\|_{L_r(\prior)}$ is computable and which is \emph{lower semi-computable} (abbreviated \emph{lsc}): that is, $g^{-1}(q,\infty]$ is c.e. open uniformly in rational $q\geq 0$. Since they are lsc functions, the $L_r(\prior)$ Schnorr tests are everywhere defined, and one can show that their $\prior$-a.s. equivalence class is an $L_r(\prior)$ computable point.

Since, in Definition~\ref{defn:assum}(\ref{assum2.5}), the  likelihood is viewed as an element of the space $L_1(\prior)$, we need to specify how to understand its pointwise behavior:
\begin{defn}[Convention on versions: differences of $L_1(\prior)$ Schnorr tests]\label{defn:conventionversions}
If $f$ is an $L_1(\prior)$ computable function, then we restrict attention to versions of it of the form $g-h$, where $g,h$ are two $L_1(\prior)$ Schnorr tests.
\end{defn}
\noindent By a result of Miyabe \cite[Theorem 4.3]{Miyabe2013-wy}, all $L_1(\prior)$ computable functions have such versions, and any two such versions agree on the Schnorr randoms, which we define in a moment.\footnote{Further, these versions are known to agree, again by Miyabe \cite[7]{Miyabe2013-wy}, with the version $\lim_n f_n$ on the Schnorr randoms, where $f_n\rightarrow f$ fast in $L_1(\prior)$ and $f_n$ is a computable sequence from the countable dense set of $L_1(\prior)$. The latter version is preferred by Pathak, Rojas, and Simpson \cite[Definition 3.8, p. 314]{Pathak2014aa} and  Rute \cite[Definition 3.17, p. 16]{Rute2012aa}.} This convention has the virtue of allowing us to use many natural versions. But this convention needs to be paired with some convention on how to treat $g(\psvar)-h(\psvar)$ when both values are infinite. So as to avoid having to define notation on partial functions, we treat it as conventionally equal to zero. 

The paradigmatic example of a parameter space in nonparametric statistics is:
\begin{defn}[Infinite-dimensional simplex]\label{defn:sinfty} 
The infinite-dimensional simplex is 
$\mathbb{S}_{\infty}= \{\psvar\in [0,1]^{\mathbb{N}}: \sum_i \psvar(i)=1\}$.
\end{defn}
\noindent  This is especially natural when the sample space is Baire space $\mathbb{N}^{\mathbb{N}}$ and when the likelihood is the countable product of~$\psvar$, as a probability measure on the natural numbers. This is what happens in countable versions of stickbreaking and the Dirichlet process in nonparametric statistics (cf. \cite[\S{3.3}]{Ghosal2017-jk}, \cite{Sethuraman1994-mq}).

We then define a classical consistency notion and its effectivization:
\begin{defn}[Consistency notions, classical and effective]\label{defn:con:eff} 
\text{}
\begin{enumerate}[leftmargin=*]
    \item \label{defn:con:eff:20} The parameter $\psvar$ is \emph{classically consistent} if $\pmap{\psvar}$ is in $\mathcal{P}(\sspace)$ and, for all open $U$ in $\pspace$, one has that $\lim_n \poste{U}{\ssvarn{n}}=I_U(\psvar)$ for $\pmap{\psvar}$-a.s. many $\ssvar$ in $\sspace$.
    \item \label{defn:con:eff:2} The parameter $\psvar$ is \emph{computably consistent} if $\pmap{\psvar}$ is in $\mathcal{P}(\sspace)$ and, for all c.e. open $U$ in $\pspace$ with $\prior(U)$ computable, one has that $\lim_n \poste{U}{\ssvarn{n}}=I_U(\psvar)$ for $\pmap{\psvar}$-a.s. many $\ssvar$ in $\sspace$.
\end{enumerate}
\end{defn}
\noindent Since this definition concerns the pointwise behavior of the likelihood, it presupposes a version which accords with Definition~\ref{defn:conventionversions}. The classical notion in (\ref{defn:con:eff:20}) is close to that used in the traditional statistical literature, although this literature usually restricts it to open $U$ containing the parameter $\psvar$, as this can be equivalently expressed in terms of weak convergence of measures (\cite[123-124]{Ghosal2017-jk}).

Our first main theorem is the following effectivization of Doob's Consistency Theorem  (\cite{Doob1949-hg}, \cite[Theorem 6.9]{Ghosal2017-jk}, \cite[Theorem 7.78]{Schervish2012-cn}).
\begin{restatable}[Effective Doob Consistency Theorem and Schnorr randomness]{thm}{thmdoob}\label{thm:doob}
Suppose there is a sequence of uniformly computable continuous functions ${f_n:\sspace\rightarrow \pspace}$ such that both of the following happen:
\begin{enumerate}[leftmargin=*]
    \item \label{thm:doob1} For all $(\psvar,\ssvar)$ in a $\prodm$-measure one subset of $\mathsf{KR}^{\prior}\times \sspace$, we have that $\lim_n f_n(\ssvar)$ exists and is equal to $\psvar$.
    \item \label{thm:doob2} There is a $\prior$-computable basis on $\pspace$ such that, for all $n\geq 0$ and all elements $U$ of the $\prior$-computable basis, the event $f_n^{-1}(U)$ is uniformly both c.e. open and effectively closed.
\end{enumerate}
Then all pairs $(\psvar, \ssvar)$ of parameters and samples in $\mathsf{SR}^{\prodm}$ are such that, for all c.e. open $U$ in $\pspace$ with $\prior(U)$ computable, one has that $\lim_n \poste{U}{\ssvarn{n}}=I_U(\psvar)$.
Further, all parameters $\psvar$ in $\mathsf{SR}^{\prior}$ are computably consistent.
\end{restatable}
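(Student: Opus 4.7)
The proof plan has three stages: effective martingale convergence of the posterior on $\mathsf{SR}^{\prodm}$, identification of the limit as $I_U(\psvar)$, and descent from $\mathsf{SR}^{\prodm}$ to $\mathsf{SR}^{\prior}$.

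Stage one, convergence on $\mathsf{SR}^{\prodm}$. Fix a c.e.\ open $U\subseteq \pspace$ with $\prior(U)$ computable. Viewed on the product space, the process $(\psvar,\ssvar)\mapsto \poste{U}{\ssvarn{n}}$ is, by (\ref{eqn:post}) together with the Fubini-style formula (\ref{eqn:formula:int:mu}), the Doob martingale $E_{\prodm}[I_{\prps^{-1}(U)} \mid \ssfilt{n}]$ relative to the filtration $\ssfilt{n}$ generated by the first $n$ sample coordinates. Since $\prps$ is computable, $\prps^{-1}(U)$ is c.e.\ open, and $\prodm(\prps^{-1}(U))=\prior(U)$ is computable, so $I_{\prps^{-1}(U)}$ is an $L_1(\prodm)$ Schnorr test in the sense of Definition~\ref{defn:conventionversions}. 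I would then invoke the effective L\'evy upward convergence theorem for Schnorr randomness (in the Miyabe/Rute style) to conclude that $\lim_n \poste{U}{\ssvarn{n}}$ exists and equals $E_{\prodm}[I_{\prps^{-1}(U)} \mid \ssfilt{\infty}]$ pointwise at every $(\psvar,\ssvar)\in \mathsf{SR}^{\prodm}$.

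Stage two, identification of the limit. Classically, hypothesis~(\ref{thm:doob1}) says that $\prps$ coincides $\prodm$-a.e.\ with the $\ssfilt{\infty}$-measurable map $\lim_n (f_n \circ \prss)$, so $E_{\prodm}[I_U(\prps)\mid \ssfilt{\infty}]=I_U(\prps)$ $\prodm$-a.s. To upgrade this to a pointwise identity on $\mathsf{SR}^{\prodm}$, I would use hypothesis~(\ref{thm:doob2}): for every basis element $V$ of the $\prior$-computable basis, $f_n^{-1}(V)$ is uniformly both c.e.\ open and effectively closed, hence its indicator is a computable point of $L_1(\prodm_{\prss})$. Exhausting $U$ from within by c.e.\ unions of such basis elements yields effective $L_1(\prodm)$-approximations of $I_U\circ \prps$ by the sample-only functions $I_{f_n^{-1}(V)}\circ \prss$, and the preservation of Schnorr randomness under fast $L_1$-convergence (Miyabe's theorem) then pins the pointwise limit to $I_U(\psvar)$ at every pair in $\mathsf{SR}^{\prodm}$.

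Stage three, descent to $\mathsf{SR}^{\prior}$. The projection $\prps:(\prodspace,\prodm)\to (\pspace,\prior)$ is a computable measure-preserving map, so by an effective disintegration argument each $\psvar\in \mathsf{SR}^{\prior}$ admits a $\pmap{\psvar}$-full set of samples $\ssvar$ with $(\psvar,\ssvar)\in \mathsf{SR}^{\prodm}$. Combined with Stages one and two, this yields computable consistency at every $\psvar\in \mathsf{SR}^{\prior}$.

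The main obstacle is Stage two: converting an almost-sure identification of conditional expectations into a pointwise identification valid at every Schnorr random pair. The two-sided effectivity of $f_n^{-1}(V)$ built into hypothesis~(\ref{thm:doob2}) is exactly what is needed so that the approximating indicators become genuine computable points of $L_1(\prodm)$, and so that their $L_1$-limit can be evaluated pointwise on $\mathsf{SR}^{\prodm}$ and matched with the martingale limit from Stage one.
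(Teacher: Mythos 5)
Your Stage three is exactly the paper's Proposition~\ref{prop:important}, and your recognition that hypothesis~(\ref{thm:doob2}) makes $I_{f_n^{-1}(V)}$ a computable point of $L_1(\prodm_{\prss})$, with $I_{f^{-1}(U)}$ effectively approximable by such indicators, is the content of the paper's key Proposition~\ref{prop:thiswashard}. But Stage two has a genuine gap, and it is precisely the one the paper flags in the introduction as the reason it does \emph{not} route the argument through an effective L\'evy Upward Theorem. The effective L\'evy theorem (and Miyabe's result on fast $L_1$-convergence) gives you that the martingale limit at a Schnorr random $\ssvar$ agrees with the \emph{canonical version} of $E_{\prodm}[I_{\prps^{-1}(U)}\mid \ssfilt{\infty}]$, i.e.\ with $\lim_k I_{f_{n_k}^{-1}(V_k)}(\ssvar)$ along your fast approximating sequence; and it guarantees that any two such fast-converging sequences give the \emph{same} value at Schnorr randoms. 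What it does not give you is that this common value equals $I_U(\psvar)$. The only bridge between the sample-side quantities $I_{f_{n_k}^{-1}(V_k)}(\ssvar)$ and the parameter coordinate $\psvar$ is hypothesis~(\ref{thm:doob1}), which is a bare $\prodm$-a.s.\ statement about a Borel set with no effectivity; a Schnorr random pair need not lie in that set, so you cannot conclude $f_{n}(\ssvar)\to\psvar$ pointwise at $(\psvar,\ssvar)\in\mathsf{SR}^{\prodm}$ and hence cannot identify the limit. (That pointwise convergence \emph{does} hold in the paradigmatic example is Proposition~\ref{prop:limrelfreqs}, but it requires a separate Chebyshev-plus-Schnorr-test argument and is not a hypothesis of the theorem.)

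The paper's workaround is structurally different and worth internalizing. It never identifies the limit of a two-sided $L_1$ object at random points; instead it exploits that $\ssvar\mapsto\poste{U}{\ssvarn{n}}$ is \emph{lsc} (Proposition~\ref{prop:posteriorintegral}), shows $\sup_{n\geq m}\poste{U}{\ssvarn{n}}$ is a Schnorr $L_2(\prodm_{\prss})$-test via Proposition~\ref{prop:thiswashard} and Doob's maximal inequality, and then applies a one-sided criterion (Proposition~\ref{prop:wr2propv2}, built on the $\Sigma^{0,\prior}_2$ characterizations of Schnorr randomness) to conclude $\poste{U}{\ssvarn{n}}\to 0$ at every Schnorr random pair \emph{outside} $U\times\sspace$. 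The convergence to $1$ when $\psvar\in U$ is then obtained not directly but by running the same argument on the c.e.\ open complements $V_m=\pspace\setminus C_m$ of the effectively closed supersets $C_m\supseteq U_m$ supplied by the $\prior$-computable basis, and using that the posterior is a probability measure concentrated on $\mathsf{KR}^{\prior}$ so that $\poste{U_m}{\ssvarn{n}}=\poste{C_m}{\ssvarn{n}}\to 1$. This asymmetric, complement-based maneuver is what replaces the pointwise identification you are missing; without it, or without an added hypothesis forcing $f_n(\ssvar)\to\psvar$ at Schnorr random pairs, Stage two does not close.
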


In this, recall that a that a function between two computable Polish spaces is \emph{computable continuous} if the inverse image of c.e. opens is uniformlly c.e. open.   For the notion of a $\prior$-computable basis in (\ref{thm:doob2}), see Definition~\ref{defn:nucomputablebasis}. For the moment, we just note some simple examples. If $\pspace$ is Cantor space or Baire space, then the basic clopens are a $\prior$-computable basis for any computable prior $\prior$. If $\pspace$ is the reals or the unit interval then the rational-valued open intervals are a $\prior$-computable basis for any atomless computable prior $\prior$.

We now recall the notions of Kurtz and Schnorr randomness which feature in Theorem~\ref{thm:doob}. Suppose that $Y$ is a computable Polish space and that $\nu$ is a computable probability measure on $Y$. Then we define:\footnote{On Cantor space, see \cite{Downey2010aa} for a comprehensive treatment of these randomness notions. Much of this theory generalizes easily from Cantor space to arbitrary computable Polish spaces.}
\begin{defn}\label{defn:random}
A point $y$ of $Y$ is \emph{Kurtz $\nu$-random}, abbreviated $\mathsf{KR}^{\nu}$, if $y$ is in all c.e. open $U$ with $\nu(U)=1$.

A point $y$ of $Y$ is \emph{Schnorr $\nu$-random}, abbreviated $\mathsf{SR}^{\nu}$, if for all computable sequences of c.e. opens $U_n$ with $\nu(U_n)\leq 2^{-n}$ uniformly computable, one has that $y$ is not in $\bigcap_n U_n$.
\end{defn}

 The sequences $U_n$ are called \emph{sequential Schnorr $\nu$-tests}. An equivalent characterization of Schnorr randomness is: $y$ in $\mathsf{SR}^{\nu}$ iff $f(y)<\infty$ for all $L_1(\nu)$ Schnorr tests~$f$. Hence the versions in Definition~\ref{defn:conventionversions} are finite on Schnorr randoms. 
 
We need other characterizations of $\mathsf{SR}^{\nu}$, which are to our knowledge new:
\begin{restatable}{defn}{defnsigmanuzerotwo}\label{defn:defnsigmanuzerotwo}
A class $B$ is \emph{$\nu$-effectively $\Sigma^0_2$}, abbreviated $\Sigma^{0,\nu}_2$, if it has computable $\nu$-measure and it can be written as $B=\bigcup_i C_i$ where $C_i$ is a computable sequence of effectively closed sets $C_i$ which have uniformly computable measure $\nu(C_i)$. 
\end{restatable}
\noindent We prove the following characterizations of $\mathsf{SR}^{\nu}$ in terms of $\Sigma^{0,\nu}_2$ sets:

\begin{restatable}{prop}{schnorrsigmazerotwonutest}\label{prop:schnorrsigmazerotwonutest}
A point $y$ is in $\mathsf{SR}^{\nu}$ iff for all computable sequences of $\Sigma^{0,\nu}_2$ sets $B_n$ with $\nu(B_n)\leq 2^{-n}$, one has that $y$ is not in $\bigcap_n B_n$.
\end{restatable}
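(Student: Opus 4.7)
The plan is to prove the biconditional in two directions, using the $L_1(\nu)$ Schnorr test characterization of $\mathsf{SR}^{\nu}$ recalled in the excerpt (``$y \in \mathsf{SR}^{\nu}$ iff $f(y)<\infty$ for every $L_1(\nu)$ Schnorr test $f$'') together with the standard lemma of effective outer regularity for $\Pi^0_1$ classes with computable measure.

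For the direction $(\Leftarrow)$, I would show that every sequential Schnorr $\nu$-test is itself a uniform $\Sigma^{0,\nu}_2$ sequence, so that avoiding the latter implies Schnorr randomness. Fixing a $\nu$-computable basis on $\mathcal{Y}$, each c.e. open $U$ with computable $\nu(U)$ can be written as $U = \bigcup_k B(x_k, r_k)$ with the basic opens taken from the basis, and each $B(x,r)$ further as $\bigcup_m \{z : d(z,x) \leq r_m\}$ for rationals $r_m \nearrow r$ chosen so the closed balls have uniformly computable $\nu$-measures (equal to those of the corresponding open balls by the null-boundary condition). Each such closed ball is effectively closed, exhibiting $U$ as $\Sigma^{0,\nu}_2$ uniformly. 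Hence any $y$ avoiding every uniform $\Sigma^{0,\nu}_2$ sequence of measure $\leq 2^{-n}$ in particular avoids every sequential Schnorr $\nu$-test and so lies in $\mathsf{SR}^{\nu}$.

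For the direction $(\Rightarrow)$, suppose $y \in \mathsf{SR}^{\nu}$ and $(B_n)$ is a uniform $\Sigma^{0,\nu}_2$ sequence with $\nu(B_n) \leq 2^{-n}$. Writing $B_n = \bigcup_i C_{n,i}$, I would apply effective outer regularity componentwise to obtain uniformly c.e. opens $W_{n,i} \supseteq C_{n,i}$ with $\nu(W_{n,i})$ uniformly computable and $\nu(W_{n,i}) < \nu(C_{n,i}) + 2^{-(n+i+2)}$. Setting $V_n = \bigcup_i W_{n,i}$, we get $V_n \supseteq B_n$ with $\nu(V_n) \leq \nu(B_n) + 2^{-n-1}$, via the subadditive bound $V_n \setminus B_n \subseteq \bigcup_i (W_{n,i} \setminus C_{n,i})$. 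I would then use the lower-semicomputable function $g = \sum_n I_{V_n}$: for $y \in \bigcap_n B_n$ one has $g(y) = \infty$, and $\|g\|_{L_1(\nu)} = \sum_n \nu(V_n) \leq 3/2$.

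The anticipated main obstacle is that in this construction $\nu(V_n)$ is only lower-semicomputable, with a uniformly computable upper bound, whereas being an $L_1(\nu)$ Schnorr test requires $\|g\|_{L_1(\nu)}$ to be exactly computable. I would address this by strengthening the outer-regularity step so that $\nu(V_n)$ matches a prescribed computable target slightly above $\nu(B_n)$ --- for instance, by adjusting the c.e. open cover to absorb the slack computably --- or equivalently by proving a strengthened effective outer regularity directly for $\Sigma^{0,\nu}_2$ sets with computable measure, exploiting the density of c.e. opens with computable measure in the measure algebra. With that in hand, $g$ becomes a bona fide $L_1(\nu)$ Schnorr test, and Schnorr randomness of $y$ forces $g(y) < \infty$, whence $y \notin \bigcap_n B_n$.
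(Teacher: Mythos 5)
Your backward direction is fine and matches the paper: a sequential Schnorr $\nu$-test consists of c.e.\ opens with computable measure, and each such open is $\Sigma^{0,\nu}_2$ by decomposing it into closed balls from a $\nu$-computable basis (this is the paper's Proposition~\ref{prop:sigma02nu}(\ref{prop:sigma02nu:1})). Your forward direction also has the right skeleton, and you correctly identify the one real obstacle: the covering opens $V_n \supseteq B_n$ must have \emph{computable} (not merely left-c.e.) measure, uniformly in $n$, for the resulting test to be a Schnorr test.

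But that obstacle is the entire technical content of the proof, and your proposal does not actually resolve it. The paper devotes a separate result to exactly this step, Proposition~\ref{prop:sigma02nu:4}: given a $\Sigma^{0,\nu}_2$ class $B$ and rational $\epsilon>0$, one can compute a c.e.\ open $U\supseteq B$ with $\nu(U)$ computable and $\nu(U\setminus B)<\epsilon$. Its proof is not a routine componentwise application of outer regularity: one must calibrate the excess $\nu(U_n\setminus C_n)$ against a computable sequence $\delta_n$ tracking the increments $\nu(C_n\setminus C_{n-1})$, and then establish computability of $\nu(U)=\nu(B)+\nu(U\setminus B)$ through a chain of tail estimates. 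Your first suggested fix (``adjusting the c.e.\ open cover to absorb the slack computably'') is a restatement of the difficulty rather than a solution; your second (``exploiting the density of c.e.\ opens with computable measure in the measure algebra'') does not work as stated, because approximation in the measure algebra controls $\nu(V_n\triangle B_n)$ but does not give the containment $V_n\supseteq B_n$, and without containment you cannot conclude $g(y)=\infty$ for $y\in\bigcap_n B_n$. So the proposal has a genuine gap at its central step; once the covering lemma is actually proved, either your $L_1(\nu)$-test route or the paper's more direct route (the covers $V_{n+1}$ already form a sequential Schnorr test) finishes the argument.
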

\begin{restatable}{prop}{propschnorrsigmazeronutwo}\label{prop:schnorrsigma0nu2}
A point $y$ is in $\mathsf{SR}^{\nu}$ iff $y$ is in all $\Sigma^{0,\nu}_2$ classes of $\nu$-measure one.
\end{restatable}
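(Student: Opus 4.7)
The plan is to prove the two directions separately. The backward direction ($\Leftarrow$) is a direct application of the sequential Schnorr test characterization in Definition~\ref{defn:random}, while the forward direction ($\Rightarrow$) requires constructing an ordinary sequential Schnorr test that covers the complement of the given $\Sigma^{0,\nu}_2$ class of full measure, via a key lemma that partial unions of effectively closed sets with uniformly computable measures themselves have uniformly computable measures.

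For the backward direction, suppose $y$ lies in every $\Sigma^{0,\nu}_2$ class of $\nu$-measure one and let $(U_n)_n$ be a sequential Schnorr test. Consider $W = \bigcup_n (Y \setminus U_n)$: each $Y \setminus U_n$ is effectively closed with $\nu(Y \setminus U_n) = 1 - \nu(U_n)$ uniformly computable, and $\nu(W) = 1 - \nu(\bigcap_n U_n) = 1$ is computable because $\nu(\bigcap_n U_n) \leq \nu(U_k) \leq 2^{-k}$ for every $k$. Hence $W$ is $\Sigma^{0,\nu}_2$ of $\nu$-measure one, so by hypothesis $y \in W$, meaning $y \notin U_n$ for some $n$, which is exactly what Definition~\ref{defn:random} requires for $y \in \mathsf{SR}^{\nu}$.

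For the forward direction, suppose $y \in \mathsf{SR}^{\nu}$ and write $B = \bigcup_i C_i$ with the $C_i$ effectively closed, $\nu(C_i)$ uniformly computable, and $\nu(B) = 1$. The key lemma I would prove is that the partial unions $D_k = \bigcup_{i \leq k} C_i$ have uniformly computable measures. For $k=2$ one has the identity
\[
\nu(C_1 \cup C_2) + \nu(C_1 \cap C_2) = \nu(C_1) + \nu(C_2),
\]
whose right side is computable; each summand on the left is upper semicomputable as the measure of an effectively closed set, so each must also be lower semicomputable (by subtracting the other's upper approximations from the computable sum), hence computable. Induction on $k$, applied to the pair $(D_k, C_{k+1})$, then gives the general statement uniformly in $k$.

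Since $\nu(D_k) \uparrow \nu(B) = 1$ with $\nu(D_k)$ uniformly computable, I can effectively find $k_n$ with $\nu(D_{k_n}) \geq 1 - 2^{-n}$. The c.e. opens $E_n = Y \setminus D_{k_n}$ then form a sequential Schnorr test with $\nu(E_n) \leq 2^{-n}$ uniformly computable, and $D_{k_n} \subseteq B$ yields $B^c \subseteq \bigcap_n E_n$, so Schnorr randomness of $y$ forces $y \notin \bigcap_n E_n$ and hence $y \in B$. The main obstacle is the key lemma, which rests on the observation that two upper semicomputable quantities whose sum is computable must individually be computable; the rest of the argument is routine manipulation of the definitions.
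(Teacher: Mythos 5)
Your proof is correct and follows essentially the same route as the paper, which establishes this proposition by exactly the back-and-forth you describe between sequential Schnorr tests $U_n$ and the associated $\Sigma^{0,\nu}_2$ classes $\bigcup_n(Y\setminus U_n)$ of $\nu$-measure one. Your key lemma (uniform computability of $\nu(\bigcup_{i\le k}C_i)$ via the observation that two upper semicomputable reals with computable sum are each computable) is precisely the fact the paper leaves implicit when it assumes, without loss of generality, that the closed sets in a $\Sigma^{0,\nu}_2$ presentation form an increasing sequence.
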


The paradigmatic example of  conditions~(\ref{thm:doob1})-(\ref{thm:doob2}) of Theorem~\ref{thm:doob} is:
\begin{restatable}[Paradigmatic example: limiting relative frequencies]{prop}{proppara}\label{prop:para}
Let the parameter space $\pspace$ be the unit interval $[0,1]$, and let the sample space $\sspace$ be Cantor space $2^{\mathbb{N}}$. Let the prior $\prior$ be any computable probability measure on $\pspace$. Let $C$ be an effectively closed subset of $\pspace$ of $\prior$-measure one.
Let the likelihood $\pmap{\psvar}$, for $\psvar$ in $C$, be the measure on Cantor space which is the countable product of the measure on $\{0,1\}$ with $1$ being assigned probability~$\psvar$. For $\psvar$ in $C$, let it be the zero element of $\mathcal{M}^+(\sspace)$.
Let $f_n:\sspace\rightarrow \pspace$ by $f_n(\ssvar)=\frac{1}{n}\sum_{i<n} \ssvar(i)$, so that $\lim_n f_n$ is the limiting relative frequency of $1$'s, if the limit exists.
Then conditions~(\ref{thm:doob1})-(\ref{thm:doob2}) of Theorem~\ref{thm:doob} are satisfied. 
\end{restatable}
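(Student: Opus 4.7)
The plan is to verify conditions~(\ref{thm:doob1}) and~(\ref{thm:doob2}) of Theorem~\ref{thm:doob} in turn; the functions $f_n$ themselves are uniformly computable continuous since each depends only on the first $n$ bits of $\ssvar$.

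For condition~(\ref{thm:doob2}), I would take as $\prior$-computable basis on $[0,1]$ the open intervals $(a,b)\cap [0,1]$ with endpoints drawn from a fixed computable dense sequence $\{c_m\}$ that avoids the (at most countably many) atoms of $\prior$; such a sequence can be extracted from a presentation of $\prior$ by a standard construction, and collapses to the rationals in the atomless case mentioned in the paragraph following Theorem~\ref{thm:doob}. Because $f_n(\ssvar)$ depends only on $\ssvarn{n}$, for any such basis element $U$ one has the finite clopen decomposition
\begin{equation*}
f_n^{-1}(U) \;=\; \bigcup \bigl\{[\sigma] : \sigma \in 2^n \text{ and } \tfrac{1}{n}\textstyle\sum_{i<n}\sigma(i) \in U\bigr\},
\end{equation*}
computable uniformly from $n$ and an index for $U$. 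Being a finite union of basic clopens in Cantor space, $f_n^{-1}(U)$ is uniformly both c.e. open and effectively closed, as required.

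For condition~(\ref{thm:doob1}), the classical strong law of large numbers applied fiberwise says that for each $\psvar \in C$, the set $E_\psvar = \{\ssvar \in \sspace : \lim_n f_n(\ssvar) = \psvar\}$ has $\pmap{\psvar}$-measure one, since under $\pmap{\psvar}$ the coordinates $\ssvar(i)$ are i.i.d. Bernoulli($\psvar$). Applying the integration formula~(\ref{eqn:formula:int:mu}) to $E = \{(\psvar,\ssvar) : \psvar \in C,\ \lim_n f_n(\ssvar) = \psvar\}$ yields
\begin{equation*}
\prodm(E) \;=\; \int_C \pmape{E_\psvar}{\psvar}\, d\prior(\psvar) \;=\; \prior(C) \;=\; 1.
\end{equation*}
Moreover, the complement of $\mathsf{KR}^\prior$ is contained in the union over all effectively closed $\prior$-null subsets of $\pspace$, a countable union of $\prior$-null sets, so $\prior(\mathsf{KR}^\prior) = 1$ and hence $\prodm(\mathsf{KR}^\prior \times \sspace) = 1$. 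The intersection $E \cap (\mathsf{KR}^\prior \times \sspace)$ is then the required $\prodm$-measure one subset of $\mathsf{KR}^\prior \times \sspace$ on which $\lim_n f_n(\ssvar) = \psvar$.

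The only real subtlety lies in condition~(\ref{thm:doob2}) and amounts to exhibiting a suitable $\prior$-computable basis for a possibly non-atomless computable prior on $[0,1]$, which comes down to selecting endpoints at which $\prior$ has no mass. Condition~(\ref{thm:doob1}) is essentially immediate from the SLLN together with the fact that Kurtz randomness carries full prior measure, and no effective strengthening of the SLLN is needed because all that the hypothesis of Theorem~\ref{thm:doob} demands is a classical $\prodm$-measure one set.
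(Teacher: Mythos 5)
Your treatment of condition~(\ref{thm:doob1}) is correct, and it takes a genuinely more elementary route than the paper: the paper proves an effective strong law (Proposition~\ref{prop:limrelfreqs}), showing $\lim_n f_n(\ssvar)=\psvar$ for every $(\psvar,\ssvar)$ in $\mathsf{SR}^{\prodm}$, and then invokes $\prodm(\mathsf{SR}^{\prodm})=1$ together with $\mathsf{SR}^{\prodm}\subseteq \mathsf{KR}^{\prior}\times\sspace$. Since condition~(\ref{thm:doob1}) only demands a classical $\prodm$-measure one subset of $\mathsf{KR}^{\prior}\times\sspace$, your fiberwise SLLN plus the Fubini formula~(\ref{eqn:formula:int:mu}) plus $\prior(\mathsf{KR}^{\prior})=1$ does suffice here; the paper's stronger pointwise statement is reused later (e.g., for the simplex case) but is not needed for this proposition.

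The gap is in condition~(\ref{thm:doob2}). Requiring the interval endpoints only to avoid the atoms of $\prior$ is enough to make the intervals a $\prior$-computable basis, but not enough to make your clopen decomposition of $f_n^{-1}(U)$ computable, and in particular not enough to make $f_n^{-1}(U)$ \emph{effectively closed}. To decide whether $[\sigma]\subseteq f_n^{-1}((a,b))$ you must decide the strict inequalities $a<\tfrac{1}{n}\sum_{i<n}\sigma(i)<b$ between the computable reals $a,b$ and the rational $\tfrac{k}{n}$; this is only semi-decidable unless you know in advance that $a,b\neq\tfrac{k}{n}$. Without that, the set of $\sigma\in 2^n$ with $\tfrac{1}{n}\sum_{i<n}\sigma(i)\in(a,b)$ is merely c.e., so $f_n^{-1}((a,b))$ is c.e. open but its complement need not be c.e. open, and the effectively-closed half of condition~(\ref{thm:doob2}) is exactly what fails. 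Your fallback to explicit rational endpoints does handle the atomless case (two explicitly given rationals can be compared), but a general computable prior can have an atom at every rational (e.g., $\prior=\sum_i 2^{-(i+1)}\delta_{q_i}$ over a computable enumeration $q_i$ of $\mathbb{Q}\cap[0,1]$), forcing non-rational computable endpoints for which the comparison with $\tfrac{k}{n}$ is not decidable. The fix is what Proposition~\ref{prop:fnthebasis} does: use the Effective Baire Category Theorem to choose endpoints that, in addition to giving computable $\prior$-measure, avoid the countable set $\{\tfrac{1}{n}\sum_{i<n}\sigma(i):\sigma\in 2^n,\ n>0\}$; then both the set of $\sigma$ landing inside the interval and its complement in $2^n$ are uniformly c.e., which yields both c.e. openness and effective closedness of $f_n^{-1}(U)$.
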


We also can effectivize at least some cases of consistency via identifiability:

\begin{prop}[Effective identifiability]\label{thm:effectiveidentifiability}

Suppose that $\pspace$ is computably homeomorphic to a computable Polish subspace of $\mathbb{S}_{\infty}$ via the map $h:\pspace\rightarrow \mathbb{S}_{\infty}$.

Suppose that the sample space is Baire space $\mathbb{N}^{\mathbb{N}}$.

Suppose that the likelihood $\psvar\mapsto \pmap{\psvar}$ from $\pspace$ to $\mathcal{P}(\sspace)$ is given by taking $\pmap{\psvar}$ to be the countable product of $h(\psvar)$. 

Then all $\psvar$ in $\mathsf{SR}^{\prior}$ are computably consistent.
\end{prop}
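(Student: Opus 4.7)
My plan is to apply Theorem~\ref{thm:doob} by constructing a sequence of uniformly computable continuous estimators $f_n:\sspace\rightarrow \pspace$. By composing throughout with the computable homeomorphism $h$, I may identify $\pspace$ with its image $h(\pspace)\subseteq \mathbb{S}_\infty$, so that $\pmap{\psvar}$ becomes the countable product of $\psvar$ viewed as a probability measure on $\mathbb{N}$. Fix a computable dense sequence $\{\psvar_i\}_{i\geq 1}$ in $\pspace$, and for $\sigma\in \mathbb{N}^n$ let $\hat{p}_\sigma\in \mathbb{S}_\infty$ denote the empirical distribution $\hat{p}_\sigma(k)=\frac{1}{n}\#\{i<n:\sigma(i)=k\}$. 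Define $f_n$ to be constant on each cylinder $[\sigma]$ with $|\sigma|=n$, taking value $\psvar_{i^*(\sigma)}$, where $i^*(\sigma)\in\{1,\dots,n\}$ is obtained uniformly computably by computing rational approximations to $d_{\mathbb{S}_\infty}(h(\psvar_i),\hat{p}_\sigma)$ to precision $1/n^2$ for each $i\leq n$ and returning the smallest-index minimizer. This renders $f_n$ locally constant on cylinders of length $n$, hence computable continuous uniformly in $n$.

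For condition~(\ref{thm:doob1}) of Theorem~\ref{thm:doob}, the strong law of large numbers for i.i.d.\ $\mathbb{N}$-valued sequences yields, for each $\psvar\in \pspace$ and $\pmap{\psvar}$-a.s.\ $\ssvar$, that $\hat{p}_{\ssvarn{n}}\to h(\psvar)$ in $\mathbb{S}_\infty$; by Fubini this holds on a $\prodm$-measure one subset of $\pspace\times \sspace$, and hence of $\mathsf{KR}^{\prior}\times \sspace$ (using $\prior(\mathsf{KR}^{\prior})=1$). Combined with the density of $\{h(\psvar_i)\}$ in $h(\pspace)$ and the continuity of $h^{-1}:h(\pspace)\to\pspace$, this forces $f_n(\ssvar)\to \psvar$ on the same set. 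For condition~(\ref{thm:doob2}), since $f_n$ is locally constant on cylinders of length $n$, one has $f_n^{-1}(U)=\bigsqcup_{|\sigma|=n,\,\psvar_{i^*(\sigma)}\in U}[\sigma]$; to make this simultaneously c.e.\ open and effectively closed, I would take the $\prior$-computable basis to consist of sets with $\prior$-null topological boundary and effectively closed closure, and then perturb $\{\psvar_i\}$ so that $\psvar_i\notin \partial U$ for every such basis element $U$ (possible since the union of these boundaries is $\prior$-null and meager). Under these choices ``$\psvar_i\in U$'' is both c.e.\ and co-c.e., hence decidable uniformly in $i$ and $U$, so $f_n^{-1}(U)$ is a recursive union of cylinders and therefore clopen.

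With both hypotheses of Theorem~\ref{thm:doob} verified, that theorem delivers the conclusion that every $\psvar\in \mathsf{SR}^{\prior}$ is computably consistent. The principal obstacle is the clopen requirement in condition~(\ref{thm:doob2}): for a generic c.e.\ open $U$ and a computable point $\psvar_i$, the predicate ``$\psvar_i\in U$'' is only c.e., so the boundary-avoidance step is essential and relies on the favorable structure of $\prior$-computable bases. The convergence condition, by contrast, is a direct effectivization of the strong law of large numbers through the identifiability map $h$.
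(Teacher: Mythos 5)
Your proposal is essentially correct but takes a genuinely different route from the paper. The paper does not build estimators mapping into $\pspace$ at all: it passes to the image $h(\pspace)\subseteq\mathbb{S}_\infty$, extends the prior and likelihood to the full superspace $\mathbb{S}_\infty$ (with the likelihood set to zero off the image), verifies conditions~(\ref{thm:doob1})--(\ref{thm:doob2}) of Theorem~\ref{thm:doob} on $\mathbb{S}_\infty$ using the raw empirical-distribution maps $f_n(\ssvar)(j)=\frac{1}{n}\sum_{i<n}I_{\{j\}}(\ssvar(i))$ (Proposition~\ref{prop:sinftyprob}, which reduces coordinate-wise to the Bernoulli case of Proposition~\ref{prop:para}), and then transfers computable consistency back down to $\pspace$ via a dedicated subspace-transfer lemma (Proposition~\ref{prop:superspace}). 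The reason for the superspace detour is exactly the problem you solve by hand: the empirical distribution $\hat p_{\ssvarn{n}}$ lies in $\mathbb{S}_\infty$ but generally not in $h(\pspace)$, so the natural estimators do not take values in the parameter space. You instead compose the empirical distribution with an approximate nearest-neighbour projection onto a computable dense sequence in $\pspace$, which lets you apply Theorem~\ref{thm:doob} once, directly on $\pspace$. Your route buys a single application of the main theorem with no transfer lemma; the paper's buys canonical estimators whose clopen-preimage condition is essentially free (preimages of coordinate intervals under relative frequencies are unions of cylinders determined by counting), at the cost of proving Proposition~\ref{prop:superspace}.

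One step of your condition~(\ref{thm:doob2}) argument needs repair, though the repair is standard. You cannot in general ask for basis elements with ``effectively closed closure'': the topological closure of a c.e.\ open need not be effectively closed. What works is the Hoyrup--R\'ojas basis of open balls $B(c,r)$ paired with closed balls $B[c,r]$, with the radii chosen by the Effective Baire Category Theorem so that each sphere $B[c,r]\setminus B(c,r)=\{y:d(c,y)=r\}$ is simultaneously $\prior$-null and nowhere dense; the latter is achievable because any radius whose sphere has nonempty interior must equal $d(c,k)$ for some $k$ in the countable dense set, and this countable set of uniformly computable reals can be effectively avoided. Then $U_j\cup(\pspace\setminus C_j)$ is uniformly dense c.e.\ open, your perturbed sequence $\{\psvar_i\}$ can be taken inside $\bigcap_j\big(U_j\cup(\pspace\setminus C_j)\big)$ by another application of effective Baire category, and ``$\psvar_i\in U_j$'' is decided by running the two semi-decision procedures in parallel. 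A smaller point: the distances $d_{\mathbb{S}_\infty}(h(\psvar_i),\hat p_\sigma)$ must be computed in the subspace metric of Proposition~\ref{prop:pi02eff}, whose terms $1/d(\cdot,C_n)$ are computable only via a search for positive rational lower bounds; the search terminates because all points involved lie in $\mathbb{S}_\infty$. With these emendations your argument goes through.
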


See \S\ref{sec:simplex} for the definition of computable Polish subspace. Like with the classical proofs of consistency via identifiability (cf. \cite[Proposition 6.10]{Ghosal2017-jk}), our effectivization ultimately goes through limiting relative frequencies.

While proof of Theorem~\ref{thm:doob} uses lots of facts about Schnorr randomness, it is apriori not clear whether a weaker randomness notion suffices. This is of concern since many consistency-like notions in weaker settings, often called density notions, are not really algorithmic randomness notions, since some of them fail the Strong Law of Large Numbers.\footnote{See discussion in introduction to \cite{Bienvenu2014-cs}.} However, we can alleviate this concern by showing:
\begin{restatable}{thm}{thmreverse}\label{thm:reversal}

Suppose that $\prior$ is a computable probability measure on a computable Polish space $\pspace$. Suppose that there is a triple of sample space $\sspace$, likelihood $\psvar\mapsto \pmap{\psvar}$, and uniformly computable continuous functions $f_n:\sspace\rightarrow \pspace$ such that:
\begin{enumerate}[label=(\alph*), ref=\alph*, leftmargin=*]
\item\label{thm:reversal:2a} The sample space $\sspace$ and likelihood $\psvar\mapsto \pmape{\sigma}{\psvar}$ satisfies conditions (\ref{assum1})-(\ref{assum2.5}) of Definition~\ref{defn:assum}.
\item\label{thm:reversal:2c} The uniformly computable continuous functions $f_n:\sspace\rightarrow \pspace$ satisfy conditions~(\ref{thm:doob1})-(\ref{thm:doob2}) of Theorem~\ref{thm:doob}.
\end{enumerate}

Then the following are equivalent for a parameter $\psvar_0$ in $\pspace$:
\begin{enumerate}[leftmargin=*]
    \item\label{thm:reversal:1} $\psvar_0$ is in $\mathsf{SR}^{\prior}$
    \item\label{thm:reversal:2} $\psvar_0$ is computably consistent relative to any triple satisfying (\ref{thm:reversal:2a})-(\ref{thm:reversal:2c}).
\end{enumerate}
\end{restatable}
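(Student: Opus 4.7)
\textbf{Direction $(\ref{thm:reversal:1}) \Rightarrow (\ref{thm:reversal:2})$.} This direction is immediate from Theorem~\ref{thm:doob}: for any triple $(\sspace,\pmap{\cdot},f_n)$ satisfying (\ref{thm:reversal:2a})--(\ref{thm:reversal:2c}), the hypotheses of Theorem~\ref{thm:doob} are met, and so every parameter in $\mathsf{SR}^{\prior}$ is computably consistent relative to that triple, in particular $\psvar_0$.

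\textbf{Direction $(\ref{thm:reversal:2}) \Rightarrow (\ref{thm:reversal:1})$.} We argue by contrapositive: assuming $\psvar_0 \notin \mathsf{SR}^{\prior}$, the plan is to exhibit one specific triple satisfying (\ref{thm:reversal:2a})--(\ref{thm:reversal:2c}) relative to which $\psvar_0$ fails to be computably consistent. The construction adapts the paradigmatic Bernoulli example (Proposition~\ref{prop:para}) from the unit interval to our arbitrary computable Polish parameter space. First, fix a sequence of computable continuous functions $\iota_k:\pspace\to[0,1]$ separating the points of $\pspace$, built from the computable metric and a computable dense sequence in $\pspace$. Next, take $\sspace=2^{\mathbb{N}}$, identified with $(2^{\mathbb{N}})^{\mathbb{N}}$ via a computable interleaving. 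Let $\pmap{\psvar}$ be the independent product of Bernoulli measures with parameters $\iota_k(\psvar)$ in the $k$-th coordinate block, and let $f_n(\ssvar)\in\pspace$ be the best approximation (in the metric of $\pspace$) to a point whose $\iota_k$-coordinates match the first-$n$ empirical means in the $k$-th block for $k\leq n$.

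\textbf{Verification of (\ref{thm:reversal:2a})--(\ref{thm:reversal:2c}).} The sample space is Cantor space, so (\ref{thm:reversal:2a}) is immediate. Uniform $L_1(\prior)$-computability of the likelihood in $\sigma$ follows from the uniform computability of each $\iota_k(\psvar)$ in $\psvar$ and the product structure. Condition~(\ref{thm:doob1}) of Theorem~\ref{thm:doob} follows by applying the effective strong law of large numbers, coordinatewise, on Schnorr $\pmap{\psvar}$-randoms. Condition~(\ref{thm:doob2}) is satisfied by taking preimages of rational subintervals under the $\iota_k$ to generate a $\prior$-computable basis of $\pspace$; measures of these cylinders are uniformly computable because the $\iota_k$ push $\prior$ forward to computable Borel measures on $[0,1]$.

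\textbf{Main obstacle: exhibiting inconsistency at $\psvar_0$.} The heart of the proof is producing a c.e.~open $U\subseteq\pspace$ with computable $\prior(U)$ such that $\lim_n \poste{U}{\ssvarn{n}}\neq I_U(\psvar_0)$ on a $\pmap{\psvar_0}$-positive set of samples. By Proposition~\ref{prop:schnorrsigma0nu2}, since $\psvar_0 \notin \mathsf{SR}^{\prior}$, there exists a $\Sigma^{0,\prior}_2$ class $A=\bigcup_i C_i$ of $\prior$-measure one with $\psvar_0 \notin A$, where each $C_i$ is effectively closed with uniformly computable measure. The candidate $U$ is extracted as a c.e.~open separating $\psvar_0$ from a suitably large finite union of the $C_i$, so that $\prior(U)$ is computable via the complementary $\prior(C_i)$. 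The remaining difficulty, and the main obstacle of the proof, is transferring the randomness-theoretic failure at $\psvar_0$ in the parameter space to a positive-probability failure of posterior convergence in sample space; this uses the product-Bernoulli structure together with the martingale characterization of the posterior to show that sampling from $\pmap{\psvar_0}$ cannot effectively confirm the value of $I_U(\psvar_0)$ in the limit.
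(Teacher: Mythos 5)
Your forward direction matches the paper's. The converse, however, has a genuine gap at exactly the step you flag as the ``main obstacle,'' and the mechanism you propose for closing it is not the one that works. You aim to exhibit a positive-$\pmap{\psvar_0}$-probability failure of posterior convergence for a Bernoulli-type likelihood built from point-separating functions $\iota_k$. But for such product-Bernoulli models, classical consistency typically holds at \emph{every} parameter in the support of the prior, and whenever it does, the open-set form of Portmanteau forces $\poste{U}{\ssvarn{n}}\rightarrow 1 = I_U(\psvar_0)$ for every open $U$ containing $\psvar_0$ --- in particular for the members $U_n$ of the Schnorr test witnessing $\psvar_0\notin\mathsf{SR}^{\prior}$, which is the only data your hypothesis hands you. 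Non-Schnorr-randomness of $\psvar_0$ does not by itself produce a c.e.\ open $U$ with computable $\prior(U)$ on whose topological boundary $\psvar_0$ sits, so there is no visible route from $\psvar_0\notin\mathsf{SR}^{\prior}$ to a failure of the \emph{second} conjunct of Definition~\ref{defn:con:eff}(\ref{defn:con:eff:2}). In addition, your construction discards the hypothesized triple and would need a from-scratch verification that a triple satisfying (\ref{thm:reversal:2a})--(\ref{thm:reversal:2c}) exists over an arbitrary computable Polish $\pspace$ (in particular that your $f_n^{-1}(U)$ are uniformly effectively clopen), which is itself nontrivial and not established.

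The paper's proof attacks the \emph{first} conjunct of the definition of computable consistency instead: it takes the given triple, replaces $T$ by a tree $T^{\infty}$ consisting of countably many copies of $T$ hanging off the root, and defines $\pmapalte{(m)^{\frown}\sigma}{\psvar}=\pmape{\sigma}{\psvar}\cdot I_{U_m\setminus U_{m+1}}(\psvar)$ for a sequential Schnorr test $U_n$ capturing $\psvar_0$. Under the convention on versions (Definition~\ref{defn:conventionversions}, with $\infty-\infty=0$), the resulting likelihood is the zero element of $\mathcal{M}^+(\sspace)$ precisely on $\bigcap_n U_n\ni\psvar_0$, so $\pmapalt{\psvar_0}\notin\mathcal{P}(\sspace)$ and $\psvar_0$ fails computable consistency outright; the bulk of the work then goes into checking that the modified triple still satisfies (\ref{thm:reversal:2a})--(\ref{thm:reversal:2c}) via the identity $\prodmalt(B)=\sum_m\prodm(\iota_m^{-1}(B)\cap((U_m\setminus U_{m+1})\times\sspace))$. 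Your proposal contains neither this idea nor a workable substitute for it.
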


This theorem applies in particular to the paradigmatic example of a computable prior on the unit interval (cf. Proposition~\ref{prop:para}). Hence this theorem provides many examples of parameters that are not computably consistent. However, since the proof involves greatly varying the sample space and likelihood, it is also important to understand when computable inconsistency can emerge by varying the prior.

This is handled by Freedman's Inconsistency Theorem (\cite[Theorem 6.12]{Ghosal2017-jk}):

\begin{restatable}{thm}{thmfreedman}\label{thm:freedman} (Effective Freedman Inconsistency).

Let the parameter space $\pspace$ be the infinite dimensional simplex $\mathbb{S}_{\infty}$. Let the sample space $\sspace$ be Baire space $\mathbb{N}^{\mathbb{N}}$. Let the likelihood $\psvar\mapsto \pmap{\psvar}$ from $\pspace$ to Baire space be given by taking $\pmap{\psvar}$ to be the countable product of $\psvar$, as a probability measure on the natural numbers.

Then the set of points $( \prior,\psvar)$ in $\mathcal{P}(\pspace)\times \pspace$ which are not classically consistent is effectively comeager.
\end{restatable}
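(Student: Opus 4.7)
The plan is to effectivize the classical proof of Freedman's inconsistency theorem by producing a uniformly computable sequence of effectively open dense subsets of $\mathcal{P}(\pspace) \times \pspace$ whose intersection consists entirely of classically inconsistent pairs. My guiding intuition is that finitely-supported priors, whose support avoids a small neighborhood of the true $\psvar$, produce posteriors that concentrate on the KL-nearest support point rather than $\psvar$; such priors are weakly dense in $\mathcal{P}(\pspace)$ and can be produced effectively, so inconsistency is witnessed from every effective open neighborhood.

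First I would introduce, for each triple $(V, \epsilon, N)$ with $V$ a basic c.e.\ open in $\pspace$, $\epsilon \in (0, 1/2) \cap \mathbb{Q}$, and $N \in \mathbb{N}$, the set
\begin{equation*}
G_{V,\epsilon,N} = \{(\prior, \psvar) : \psvar \in V\} \cup \Bigl\{(\prior, \psvar) : \exists n \geq N,\, S \subseteq \mathbb{N}^n \text{ finite},\, \textstyle\sum_{\sigma \in S} \pmape{\sigma}{\psvar} > \epsilon,\, \forall \sigma \in S\; \poste{V}{\sigma} > 1 - \epsilon \Bigr\}.
\end{equation*}
Each $G_{V,\epsilon,N}$ is effectively open: the first clause is c.e.\ open since $V$ is; in the second clause, for each fixed $(n, S)$ the sum $\sum_{\sigma \in S} \pmape{\sigma}{\psvar} = \sum_{\sigma \in S} \prod_{i<n} \psvar(\sigma_i)$ is computable continuous in $\psvar$ and each inequality $\poste{V}{\sigma} > 1 - \epsilon$ rewrites as $\epsilon \int_V \pmape{\sigma}{\psvar'} \, d\prior(\psvar') > (1-\epsilon) \int_{V^c} \pmape{\sigma}{\psvar'} \, d\prior(\psvar')$, which is effectively open in $\prior$ (l.s.c.\ left, u.s.c.\ right, using that $V$ is c.e.\ open and $V^c$ effectively closed).

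The core work is effective density of each $G_{V,\epsilon,N}$. Given a basic open $\mathcal{U}_0 \times \mathcal{W}_0$: if $\mathcal{W}_0 \cap V \neq \emptyset$ the first clause supplies a witness trivially. Otherwise I would fix $\psvar \in \mathcal{W}_0 \subseteq \pspace \setminus V$ and construct a finitely-supported $\prior = \sum_{i \leq K} \alpha_i \delta_{\psvar^{(i)}} \in \mathcal{U}_0$ with $\psvar^{(0)} \in V$ chosen close to $\psvar$ in Kullback--Leibler divergence and $\psvar^{(1)}, \ldots, \psvar^{(K)} \in \pspace \setminus V$ chosen at uniformly larger KL distance from $\psvar$; the weights $\alpha_i$ would be obtained by solving an effective linear program encoding the basic-open constraints defining $\mathcal{U}_0$. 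For $n$ large, the standard large-deviation estimate $\pmape{\sigma}{\psvar^{(j)}} / \pmape{\sigma}{\psvar^{(0)}} \approx \exp(-n [D(\psvar \| \psvar^{(j)}) - D(\psvar \| \psvar^{(0)})])$ along $\pmap{\psvar}$-typical $\sigma$ implies the posterior concentrates on $\psvar^{(0)} \in V$, so the KL-typical set $S$ of length-$n$ sequences witnesses both the threshold $\sum_{\sigma \in S} \pmape{\sigma}{\psvar} > \epsilon$ and the inequalities $\poste{V}{\sigma} > 1 - \epsilon$.

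The main obstacle will be the effective construction of this finitely-supported approximating prior: given an arbitrary basic open $\mathcal{U}_0 \subseteq \mathcal{P}(\pspace)$, one must computably produce $\prior \in \mathcal{U}_0$ with support configured so that the KL-minimizer to $\psvar$ lies in $V$. This calls for a computable version of the classical weak-density of finitely-supported measures (solvable via the linear program above once the basic-open constraints are written in terms of integrals of computable continuous test functions), together with effective KL-distance control along the support points. Once effective density is established, passage from a $G_{V,\epsilon,N}$-witness to classical inconsistency follows: the events $A_n = \bigcup_{\sigma \in S_n}[\sigma]$ have $\pmap{\psvar}(A_n) > \epsilon$ for arbitrarily large $n$, so by the reverse Fatou lemma $\pmap{\psvar}(\limsup_n A_n) \geq \epsilon$, and on $\limsup_n A_n$ one has $\limsup_m \poste{V}{\ssvarn{m}} \geq 1 - \epsilon$; combined with $\pmap{\psvar}$-a.s.\ existence of $\lim_m \poste{V}{\ssvarn{m}}$ (obtained via martingale convergence of the posterior under $\prodm$ and disintegration), this yields $\pmap{\psvar}(\lim_m \poste{V}{\ssvarn{m}} \geq 1 - \epsilon) > 0$ for any $\psvar \notin V$, contradicting classical consistency at the open $V$ and thus establishing that every $(\prior, \psvar)$ in $\bigcap_{V,\epsilon,N} G_{V,\epsilon,N}$ is classically inconsistent.
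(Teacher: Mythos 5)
Your overall architecture is sound and runs parallel to the paper's: a computable family of c.e.\ open sets indexed by a basic open $V$, a threshold $\epsilon$, and a time cutoff $N$; density witnessed by finitely-supported priors; and inconsistency extracted from membership in the intersection via a positive-measure set on which $\limsup_m \poste{V}{\ssvarn{m}}\geq 1-\epsilon$ while consistency would force the limit to be $0$. The openness argument is essentially the paper's Proposition~\ref{prop:freeprep} (the paper smooths $I_V$ by effective Urysohn functions and works with a double integral $\int_{\sspace}\int_{\pspace}\chi\,d\postws{\ssvarn{n}}\,d\pmapws{\psvar}$ rather than finite cylinder families, but both are workable), and the endgame matches the paper's Proposition~\ref{prop:thelprop}. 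Two remarks before the main point: since ``effectively comeager'' only requires each c.e.\ open to be \emph{classically} dense, your worry about effectively constructing the approximating prior (the linear program) is moot; and you do not need martingale convergence at the end, since classical consistency already asserts the a.s.\ existence of the limit.

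The genuine gap is in the density step, where you \emph{fix} an arbitrary $\psvar\in\mathcal{W}_0$ and only construct $\prior$. This fails in general. Take $\eta=(1,0,0,\ldots)$, let $\mathcal{W}_0=\{\psvar':\psvar'(0)>9/10\}$, let $V=\{\psvar':\psvar'(0)<1/10\}$ (so $\mathcal{W}_0\cap V=\emptyset$), and let $\mathcal{U}_0$ be a small ball around $\delta_\eta$ in $\mathcal{P}(\mathbb{S}_{\infty})$. If you fix $\psvar=\eta$, the sample is a.s.\ the constant sequence, any $\prior\in\mathcal{U}_0$ must put most of its mass on atoms $\psvar^{(j)}$ with $\psvar^{(j)}(0)$ near $1$, and any atom $\psvar^{(0)}\in V$ has $\psvar^{(0)}(0)<1/10$; the posterior then concentrates on the atoms \emph{outside} $V$ (they maximize the likelihood $\psvar^{(j)}(0)^n$), so no witness for the second clause of $G_{V,\epsilon,N}$ exists above $(\delta_\eta,\eta)$ with $\psvar$ held fixed. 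The three constraints ``$\prior\in\mathcal{U}_0$'', ``$\psvar^{(0)}\in V$'', and ``the off-atoms are KL-farther from $\psvar$ than $\psvar^{(0)}$'' are jointly unsatisfiable here. The fix is exactly the paper's device: you must also perturb $\psvar$, into the dense effectively comeager set $\mathbb{S}_{\infty}^{+}$ of everywhere-positive parameters, and place every off-atom in the dense set $\mathbb{S}_{\infty}^{-}$ of parameters with some zero coordinate (a high-coordinate zero is a tiny perturbation, so membership in $\mathcal{U}_0$ is preserved). Then $\pmap{\psvar}$-a.s.\ the sample eventually contains each killing symbol, the off-atoms acquire likelihood exactly $0$, and the posterior becomes exactly the Dirac at the surviving atom in $V$ (Proposition~\ref{prop:helperpost}); no large-deviation or KL comparison is needed, which matters because an SLLN for log-likelihood ratios on an infinite alphabet with merely finite KL gaps would itself require additional integrability arguments that you have not supplied.
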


In this, effectively comeager means: it contains the countable intersection of a computable sequence of c.e. open sets, each of which is dense.\footnote{Hence, in Cantor space, a point is in all effectively comeager sets iff it is weakly 1-generic.}

By a combination of Theorem~\ref{thm:doob} and Proposition~\ref{prop:para}, we can further prove:

\begin{restatable}{cor}{corfreedman}\label{cor:freedman} 
Let $\pspace$ and $\sspace$ and $\psvar\mapsto \pmap{\psvar}$ be as in Theorem~\ref{thm:freedman}. In any non-empty c.e. open of $\mathcal{P}(\pspace)\times \pspace$ we can find a computable point $(\prior, \psvar_0)$ such that 
\begin{enumerate}[leftmargin=*]
    \item\label{cor:freedman:1} The parameter $\psvar_0$ is not  computably consistent.
    \item\label{cor:freedman:2} All parameters $\psvar$ in $\mathsf{SR}^{\prior}$ are computably consistent.
\end{enumerate}
\end{restatable}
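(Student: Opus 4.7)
For conclusion~(2), the result is immediate from Proposition~\ref{thm:effectiveidentifiability}: setting $h$ to be the identity on $\mathbb{S}_{\infty}$ makes the setup of Corollary~\ref{cor:freedman} a direct instance of that proposition, and for any computable prior $p$ on $\mathbb{S}_{\infty}$ all elements of $\mathsf{SR}^{p}$ are therefore automatically computably consistent. So the work lies in~(1): producing, in the given $V$, a computable pair $(p,\theta_0)$ with $\theta_0$ not computably consistent with respect to~$p$.

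For~(1), the plan is to apply Theorem~\ref{thm:reversal} to the triple consisting of sample space $\mathbb{N}^{\mathbb{N}}$, likelihood $\theta \mapsto \theta^{\otimes \mathbb{N}}$, and the empirical frequency maps $f_n \colon \mathbb{N}^{\mathbb{N}} \to \mathbb{S}_{\infty}$ given by $f_n(x)(k) = \frac{1}{n}\left|\{i < n : x(i) = k\}\right|$. Verifying hypothesis~(\ref{thm:reversal:2a}) is routine from Definition~\ref{defn:assum}. For~(\ref{thm:reversal:2c}), each $f_n$ depends only on $x \upharpoonright n$ and takes finitely many rational values in $\mathbb{S}_{\infty}$, so the family is uniformly computable continuous; taking a $p$-computable basis of $\mathbb{S}_{\infty}$ built from rational cylinder sets in finitely many coordinates, the preimages $f_n^{-1}(U)$ are uniformly clopen, supplying condition~(\ref{thm:doob2}) of Theorem~\ref{thm:doob}. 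The convergence $\lim_n f_n(x) = \theta$ holding $p(\cdot\mid\theta)$-a.s.\ for $\theta \in \mathsf{KR}^{p}$ is an immediate coordinatewise upgrade of the SLLN underlying Proposition~\ref{prop:para}, since convergence in the product topology on $\mathbb{S}_{\infty}$ is coordinatewise. Theorem~\ref{thm:reversal} then yields, for every computable prior $p$, the equivalence $\theta_0 \in \mathsf{SR}^{p} \iff \theta_0$ is computably consistent with respect to~$p$.

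To finish, I need a computable $(p, \theta_0) \in V$ with $\theta_0 \notin \mathsf{SR}^{p}$. Computable atomless probability measures are dense in $\mathcal{P}(\mathbb{S}_{\infty})$, shown by empirical approximation followed by smoothing via a fixed computable atomless reference measure (e.g.\ from a stickbreaking construction). Inside a basic product rectangle $V_1 \times V_2 \subseteq V$, I pick such a computable atomless $p \in V_1$ and a computable $\theta_0 \in V_2$; then $p(\{\theta_0\}) = 0$ automatically. A standard $L_1(p)$ Schnorr test built from a weighted sum $\sum_n 2^{-n} p(B_n)^{-1} 1_{B_n}$ of indicators of shrinking basic balls $B_n$ around $\theta_0$ (chosen from the $p$-computable basis so that $p(B_n)$ is computable and tends to zero fast enough) then witnesses $\theta_0 \notin \mathsf{SR}^{p}$. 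Contraposing the reversal above, $\theta_0$ is not computably consistent, establishing~(1). The chief obstacles will be (i) arranging the $p$-computable basis on $\mathbb{S}_{\infty}$ so that each basic open pulls back along every $f_n$ to a uniformly computable clopen family, and (ii) the density of computable atomless probability measures in $\mathcal{P}(\mathbb{S}_{\infty})$; both are standard classically but warrant care in the effective setting.
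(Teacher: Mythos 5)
Your conclusion~(\ref{cor:freedman:2}) is fine: Proposition~\ref{thm:effectiveidentifiability} with $h$ the identity is a legitimate route, and it is in substance the same as the paper's citation of Proposition~\ref{prop:sinftyprob} together with Theorem~\ref{thm:doob}, since the former is proved from the latter.

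The argument for~(\ref{cor:freedman:1}) has a genuine gap: you misread the quantifier in Theorem~\ref{thm:reversal}(\ref{thm:reversal:2}). That clause says $\psvar_0$ is computably consistent relative to \emph{every} triple satisfying (\ref{thm:reversal:2a})--(\ref{thm:reversal:2c}), so its contrapositive only yields that a non-Schnorr-random $\psvar_0$ fails computable consistency relative to \emph{some} triple --- and the proof in \S\ref{sec:reversal} realizes this by replacing the sample space with $[T^{\infty}]$ and building an adversarial likelihood that encodes the Schnorr test covering $\psvar_0$. It does not give the equivalence ``$\psvar_0\in\mathsf{SR}^{\prior}$ iff $\psvar_0$ is computably consistent'' for the \emph{fixed} triple of Theorem~\ref{thm:freedman} (Baire space with the i.i.d.\ product likelihood $\psvar\mapsto\psvar^{\otimes\mathbb{N}}$), which is what Corollary~\ref{cor:freedman}(\ref{cor:freedman:1}) asserts. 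Indeed, your proposed witness need not work: for a computable atomless prior with full weak support on $\mathbb{S}_{\infty}$ (e.g.\ a computable Dirichlet-type prior), classical results give posterior consistency at \emph{every} parameter in the support for this likelihood, so a computable, hence non-Schnorr-random, $\psvar_0$ in the support can perfectly well be computably consistent. Picking $\prior$ atomless and $\psvar_0$ computable is therefore not enough; the inconsistency has to be extracted from the specific i.i.d.\ model.

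The paper's actual route is quite different and does not pass through the reversal at all: the proof of Theorem~\ref{thm:freedman} exhibits a computable family of dense c.e.\ opens $U_{i,\epsilon,m,k}$ whose intersection consists of classically (hence computably) inconsistent pairs by Proposition~\ref{prop:thelprop}; the Effective Baire Category Theorem then produces a \emph{computable} point of $\bigcap_{i,\epsilon,m,k}U_{i,\epsilon,m,k}$ inside any given non-empty c.e.\ open, and the argument following (\ref{eqn:thesetv}) shows this point is not computably consistent for the i.i.d.\ likelihood itself. If you want to salvage a more elementary construction, you could instead choose $\prior$ and $\psvar_0$ computable with $\psvar_0$ outside the topological support of $\prior$ (the posterior then never charges a neighborhood of $\psvar_0$), but that requires shrinking the given c.e.\ open appropriately and is still a different argument from the one you gave.
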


The topic of this paper is closely related to that of Takahashi \cite{Takahashi2023-rl}. He was concerned with Martin-L\"of randomness in the case where the parameter space and the sample space are both Cantor space.\footnote{As Takahashi notes, this study was initiated by Vovk and V'Yugin \cite{Vovk1993-xw}. Another antecedent is Kjos-Hanssen \cite{Hanssen2010-bn}, who studies Martin-L\"of randomness in the setting of Proposition~\ref{prop:para}, albeit without a prior.} By contrast, we have focused on Schnorr randomness in the case where the parameter space is an arbitrary computable Polish space and the sample space is a closed subset of Baire space which carries the natural structure of a computable Polish space. Further, whereas we build the joint distribution from the prior and likelihood, Takahashi starts from the assumption of a computable probability measure $\prodm$ on $2^{\mathbb{N}}\times 2^{\mathbb{N}}$, defines the prior as the pushforward on the parameter space, and defines the likelihood $\pmape{\sigma}{\psvar}=\lim_n \prodm(\pspace\times [\sigma] \mid [\psvar_n]\times \sspace)$. This equation is true on the Schnorr randoms in our setting (cf. Proposition~\ref{prop:relationtotakahasi}), and so our results apply to Takahashi's setting. Since the Schnorr randoms are a superset of the Martin-L\"of randoms, our Theorem~\ref{thm:doob} thus improves Takahashi's results, in that we have identified more parameters at which consistency is achieved, albeit at the cost of more constructive constraints on the map from the sample space to the parameter space.

This paper is independent of our prior paper \cite{Huttegger2024-zb} on L\'evy's Upward Theorem. The basic reason is that the map $f:\sspace\rightarrow \pspace$, given by $f=\lim_n f_n$ where $f_n:\sspace\rightarrow \pspace$ satisfies conditions~(\ref{thm:doob1})-(\ref{thm:doob2}) of Theorem~\ref{thm:doob}, can itself be quite complicated (cf. again the example in Proposition~\ref{prop:para}), and thus complicates any direct application of the results in \cite{Huttegger2024-zb}. In more detail: the classical proof of Doob's Consistency Theorem goes through applying L\'evy's Upward Theorem to the indicator function $I_{f^{-1}(U)}$ in $\sspace$ with respect to the measure $\prodm_{\prss}$ and the filtration generated by length $n$-strings, where $U$ is open in $\pspace$ (cf. the proof of Theorem~\ref{thm:doob} at close of \S\ref{sec:proofdoob}). Since $f$ is not necessarily computable continuous, the function $I_{f^{-1}(U)}$ is not necessarily lsc when $U$ is c.e. open. Hence any application of an effective version of L\'evy's Theorem will go through some version of $I_{f^{-1}(U)}$, which may behave differently than $I_{f^{-1}(U)}$ itself does on the relevant random points.\footnote{Even if one were to find a way around this issue of versions and find a way to directly apply effective versions of L\'evy's Upward Theorem, the bulk of the proof of Theorem~\ref{thm:doob} in \S\ref{sec:proofdoob} revolves around the proof of Proposition~\ref{prop:thiswashard}, which one would still have to do; and it depends on everything in \S\ref{sec:productspace}; and the reversal in \S\ref{sec:reversal} does not appear to come out of the characterizations of Schnorr randomness via effective versions of L\'evy's Upward Theorem.} Because of this impediment, we proceed by developing an alternative approach through $\Sigma^{0,\prior}_2$ classes and Propositions~\ref{prop:schnorrsigmazerotwonutest}, \ref{prop:schnorrsigma0nu2}, which in the proof of Theorem~\ref{thm:doob} we apply to the lsc functions $\poste{U}{\ssvarn{n}}$ (cf. Proposition~\ref{prop:posteriorintegral}, and the proof of Theorem \ref{thm:doob} in \S\ref{sec:proofdoob}). The virtue of this approach is that, when $U$ is c.e. open, the posteriors $\poste{U}{\ssvarn{n}}$, as quotients of integrals, are less complicated to define pointwise than the function $I_{f^{-1}(U)}$ is.

This paper is organized as follows. In \S\ref{sec:sigmazerotwo} we prove Propositions~\ref{prop:schnorrsigmazerotwonutest}-\ref{prop:schnorrsigma0nu2} and related facts about Schnorr randomness.  In section \S\ref{sec:productspace} we prove all the effective properties of the classical statistical setup. In \S\ref{sec:proofdoob} we prove Theorem~\ref{thm:doob}. In \S\ref{sec:paradigmatic} we prove Proposition~\ref{prop:para}. In \S\ref{sec:reversal} we prove Theorem~\ref{thm:reversal}. In \S\ref{sec:simplex} we take note of the computable Polish space structure on the infinite dimensional simplex and prove Proposition~\ref{thm:effectiveidentifiability}.  In \S\ref{sec:suffincon} we prove some elementary results in preparation for Effective Freedman Inconsistency. Finally, in  \S\ref{sec:efincon} we prove Theorem~\ref{thm:freedman} and Corollary~\ref{cor:freedman}.

\section{\texorpdfstring{$\Sigma^0_2$}{Sigma zero two} and Schnorr randomness}\label{sec:sigmazerotwo}

In this section, we suppose that $Y$ is a computable Polish space and that $\nu$ is a computable probability measure on $Y$. The latter means that $\nu(U)$ is uniformly left-c.e. as $U$ ranges over c.e. opens $U$. Equivalently, it means that $\nu$ is a computable point of the computable Polish space $\mathcal{P}(Y)$. 

We presuppose familiarity with algorithmic randomness as developed on arbitrary computable Polish spaces.\footnote{\cite{Gacs2005}, \cite{Hoyrup2009-pl}, \cite{Rut:2013}, \cite{Hoyrup2021-qj}, \cite{Huttegger2024-zb}. As regards notation, we follow the latter.} The only definition from the general theory which bears mentioning, since it can vary slightly between authors, is the following:\footnote{This is from \cite[Definition 1.1(10)]{Huttegger2024-zb}. For related definitions, see \cite[Section {2.3}]{Gacs2005}, \cite[Section {1.4}, 8]{Rut:2013}, \cite[Section {5.2}]{Hoyrup2009-pl}.}
\begin{defn}\label{defn:nucomputablebasis}
A \emph{$\nu$-computable basis} is a computable sequence of c.e. opens $U_0, U_1, \ldots$ such that any c.e. open can be written as an effective union of them and such that $\nu(U_i)$ is uniformly computable and they are paired with a computable sequence of effectively closed sets $C_0, C_1, \ldots$ such that $C_i\supseteq U_i$ and $\nu(C_i)=\nu(U_i)$ is uniformly computable. 
\end{defn}

Note that for any fixed $i\geq 0$, these two events $C_i, U_i$ necessarily agree on $\mathsf{KR}^{\nu}$.

By a result of Hoyrup and R\'ojas which goes through the Effective Baire Category Theorem, such a basis always exists, and indeed can be taken to be among the open balls with centers from the countable dense set with the closed balls being the associated effectively closed sets.\footnote{\cite[Lemma 5.1.1]{Hoyrup2009-pl}.}

The following proposition, which we state without proof, collects together some elementary facts about about $\Sigma^{0,\nu}_2$ classes (cf. Definition~\ref{defn:defnsigmanuzerotwo}):
\begin{prop}\label{prop:sigma02nu}
(General results on $\Sigma^{0,\nu}_2$ classes).
\begin{enumerate}[leftmargin=*]
\item\label{prop:sigma02nu:0}  If $C$ is effectively closed with $\nu(C)$ computable, then $C$ is $\Sigma^{0,\nu}_2$.
\item\label{prop:sigma02nu:1}  If $U$ is c.e. open with $\nu(U)$ computable, then $U$ is $\Sigma^{0,\nu}_2$.
\item\label{prop:sigma02nu:1.5} The $\Sigma^{0,\nu}_2$ classes are closed under finite intersection; they are also closed under finite union and effective union provided the union has computable $\nu$-measure.
\begin{proofdetail}
\item\label{prop:sigma02nu:2} If $V$ is a c.e. open set with $\nu(V)$ computable, then for any rational $\epsilon>0$ we can compute an index for an effectively closed $D\subseteq V$ such that $\nu(D)$ is computable and $\nu(V\setminus D)<\epsilon$.    
\end{proofdetail}
\item\label{prop:sigma02nu:3} If $C$ is an effectively closed set with $\nu(C)$ computable, then for any rational $\epsilon>0$ we can compute an index for a c.e. open $U\supseteq C$ such that $\nu(U)$ is computable and $\nu(U\setminus C)<\epsilon$.
\end{enumerate}
\end{prop}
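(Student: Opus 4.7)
The plan is to dispatch the four items in order, since each will build on the previous. Item (\ref{prop:sigma02nu:0}) is immediate by taking $C_i := C$ as a constant computable sequence.

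For item (\ref{prop:sigma02nu:1}), my strategy is to exploit the Hoyrup-R\'ojas observation that any computable probability measure $\nu$ admits a computable basis of open balls $B(x,r)$ whose radii $r$ are drawn from a uniformly computable dense set of rationals satisfying $\nu(\overline{B(x,r)}) = \nu(B(x,r))$. Given any c.e.\ open $U$ with computable measure, enumerate basis balls covering $U$; using the general identity $B(x,r) = \bigcup_{r'<r}\overline{B(x,r')}$ combined with density of the good radii, each basis ball is a computable union of effectively closed balls of computable measure, all contained in $U$. Flattening yields a presentation $U = \bigcup_n D_n$ with the $D_n$ effectively closed of uniformly computable measure, and $\nu(U)$ is computable by hypothesis. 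The main conceptual obstacle of the whole proposition lives here: one must ensure this union equals $U$ \emph{exactly}, not merely up to a $\nu$-null set; the good-radii construction is precisely what makes this work, since without it one only inner-approximates $U$ in measure.

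For item (\ref{prop:sigma02nu:1.5}), write the presentations $B = \bigcup_i C_i$ and $B' = \bigcup_j C'_j$. A preliminary fact I would establish first, used repeatedly afterwards, is that the union and intersection of two effectively closed sets of computable measure again have computable measure: by inclusion-exclusion $\nu(C\cap C') = \nu(C)+\nu(C')-\nu(C\cup C')$, and $\nu(C\cup C')$ is computable because its c.e.\ open complement equals $C^c\cap(C')^c$, whose measure is left-c.e.\ directly and right-c.e.\ via inclusion-exclusion among the c.e.\ opens $C^c, (C')^c$ with computable measure. For the finite intersection $B\cap B'$, I would then present it as $\bigcup_{i,j}(C_i\cap C'_j)$ and prove that $\nu(B\cap B')$ is computable via the sandwich
\[\nu(V_N) \leq \nu(B\cap B') \leq \nu(V_N) + \bigl[\nu(B) - \nu({\textstyle\bigcup_{i\leq N}} C_i)\bigr] + \bigl[\nu(B') - \nu({\textstyle\bigcup_{j\leq N}} C'_j)\bigr],\]
where $V_N = \bigcup_{i,j\leq N}(C_i\cap C'_j)$; since $\nu(B)$ and $\nu(B')$ are computable, both bracketed error terms are driven below any prescribed $\epsilon$ by choosing $N$ large. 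The finite and effective union cases are easier: the union of a (computable family of) $\Sigma^{0,\nu}_2$ presentations is itself a computable double union of effectively closed sets with uniformly computable measure, and the hypothesized computability of the measure of the total union is precisely the remaining ingredient demanded by the definition.

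Item (\ref{prop:sigma02nu:3}) is then derived from item (\ref{prop:sigma02nu:1}) by complementation. Since $C^c$ is c.e.\ open with $\nu(C^c) = 1-\nu(C)$ computable, item (\ref{prop:sigma02nu:1}) supplies $C^c = \bigcup_n D_n$ with $D_n$ effectively closed of uniformly computable measure; passing to $\bigcup_{k\leq n}D_k$ we may assume this sequence is nested. Because $\nu(D_n)\uparrow \nu(C^c)$ with computable limit, we can computably find, given rational $\epsilon>0$, an index $N$ with $\nu(C^c)-\nu(D_N)<\epsilon$, and then set $U := D_N^c$. This $U$ is c.e.\ open with computable measure, $U\supseteq C$, and $\nu(U\setminus C) = \nu(C^c)-\nu(D_N) < \epsilon$, as required.
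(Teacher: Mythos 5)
Your proposal is correct and follows essentially the same route as the paper's proof: the constant sequence for effectively closed sets, an exact covering of each basis ball by closed balls of computable measure (you use concentric balls with good smaller radii where the paper uses closed basis balls $B[i,r_n]$ with $d(i,j)+r_n<\epsilon$, an immaterial difference), the intersection handled by truncating both presentations and estimating $\nu(B\cap B')-\nu(V_N)$, unions by flattening, and item (\ref{prop:sigma02nu:3}) by complementing an inner approximation of the c.e.\ open complement. The only cosmetic deviation is that you reprove by hand, via inclusion–exclusion, the fact that the algebra generated by c.e.\ opens of computable measure has computable measure, which the paper simply cites.
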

\begin{proofdetail}
\begin{proof}
For (\ref{prop:sigma02nu:0}), this by definition since one can just take $C_n=C$ for all $n\geq 0$.

For (\ref{prop:sigma02nu:1}), let us identify the countable dense set of $X$ with natural numbers and let us use variables $i,j$ for this countable dense set, and let $d$ be the metric with respect to which $Y$ is a computable Polish space. Let $r_n>0$ be the computable sequence of reals such that $B(i,r_n)$ is a $\nu$-computable basis with $B[i,r_n]$ the associated effectively closed ball with the same $\nu$-measure. It suffices to consider the case of $U$ an open ball $B(j,\epsilon)$ where $\epsilon>0$ is rational. In particular it suffices to show that we can compute an index for a c.e. set $I\subseteq \mathbb{N}\times \mathbb{N}$ such that  $B(j,\epsilon)=\bigcup_{(i,n)\in I} B[i,r_n]$. We define $I=\{(i,n)\in \mathbb{N}\times \mathbb{N}:  d(i,j)+r_n<\epsilon\}$. To show that $B(j,\epsilon)=\bigcup_{(i,n)\in I} B[i,r_n]$, we show the two inclusions:
\begin{itemize}[leftmargin=*]
\item For the right-to-left, suppose that $(i,n)$ is in $I$ and $x$ is in the closed ball $B[i,r_n]$. Then $d(x,j)\leq d(x,i)+d(i,j)<r_n+\epsilon-r_n = \epsilon$, and so $x$ is in $B(j,\epsilon)$.
\item For the right-to-left direction, suppose $x$ is in $B(j,\epsilon)$. By the density of the $r_n$ sequence in $\mathbb{R}^{>0}$, choose $n\geq 0$ such that $r_n<\frac{1}{2}(\epsilon-d(x,j))$. Choose $i$ from the countable dense set in the open ball $B(x,r_n)$. Then $(i,n)$ is in $I$: for, we have $d(i,j)+r_n\leq d(i,x)+d(x,j)+r_n<2r_n+d(x,j)<\epsilon$.
\end{itemize}

In the remainder of the proofs, we often appeal to the fact that if c.e. opens $U_0, U_1, \ldots$ have uniformly computable $\nu$-measure, then any event in the algebra generated by them also has uniformly computable $\nu$-measure(cf. \cite[Proposition 2.8(2)]{Huttegger2024-zb}).

For (\ref{prop:sigma02nu:1.5}), suppose that $B,B^{\prime}$ are $\Sigma^{0,\nu}_2$ classes with respective sequences $C_n, C_n^{\prime}$ of effectively closed sets. Without loss of generality, both of these sequences are increasing. Then $B\cap B^{\prime}=\bigcup_n (C_n\cap C_n^{\prime})$. Each of $C_n\cap C_n^{\prime}$ is effectively closed with computable $\nu$-measure. Given $k\geq 0$, compute $n_k\geq 0$ such that $\nu(B\setminus C_{n_k})<2^{-(k+1)}$ and $\nu(B^{\prime}\setminus C_{n_k}^{\prime})<2^{-(k+1)}$. Then one has that $\nu(B\cap B^{\prime})-\nu(C_{n_k}\cap C_{n_k}^{\prime})=\nu((B\cap B^{\prime})\setminus (C_{n_k}\cap C_{n_k}^{\prime}))\leq \nu(B\setminus C_{n_k})+\nu(B^{\prime}\setminus C_{n_k}^{\prime})<2^{-k}$. Hence $\nu(B\cap B^{\prime})$ is also computable.

To finish (\ref{prop:sigma02nu:1.5}), suppose that $B_i$ is a computable sequence of $\Sigma^{0,\nu}_2$ sets with respective sequences $C_{i,j}$ of effectively closed sets; and suppose that $B=\bigcup_i B_i$ has $\nu$-computable measure. Then simply note that $B=\bigcup_{i,j} C_{i,j}$.

For (\ref{prop:sigma02nu:2}), since $V$ is $\Sigma^{0,\nu}_2$, let $V=\bigcup_n D_n$, where $D_n$ is effectively closed with $\nu(D_n)$ uniformly computable. Without loss of generality $D_n$ is increasing. Hence, using computability of $\nu(V)$, we can compute $n$ such that $\nu(V\setminus D_n)<\epsilon$.

For (\ref{prop:sigma02nu:3}) simply apply (\ref{prop:sigma02nu:2}) to the relative complement. In more detail, let $V:=Y\setminus C$. By (\ref{prop:sigma02nu:2}) we can compute an index for an effectively closed $D\subseteq V$ such that $\nu(D)$ is computable and $\nu(V\setminus D)<\epsilon$. Let $U:=Y\setminus D$. Then $U\supseteq C$ and $\nu(U\setminus C)=\nu(V\setminus D)<\epsilon$.
\end{proof}
\end{proofdetail}
Note that (\ref{prop:sigma02nu:1}) implies: anything in $\mathsf{KR}^{\nu}$ is in all effectively closed $C$ with $\nu(C)=1$.

The following is a technical but important strengthening of (\ref{prop:sigma02nu:3}):
\begin{prop}\label{prop:sigma02nu:4}
(Effectively covering a $\Sigma^{0,\nu}_2$ class).

If $B$ is $\Sigma^{0,\nu}_2$, then for any rational $\epsilon>0$ we can compute an index for a c.e. open $U\supseteq B$ such that $\nu(U)$ is computable and $\nu(U\setminus B)<\epsilon$.
\end{prop}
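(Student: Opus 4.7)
The plan is to effectivize the cover of each effectively closed piece $C_i$ in a representation $B=\bigcup_i C_i$ with a geometrically decaying error budget, take the union, and then carefully bound the tail of its approximations to get computability of the measure of the resulting c.e.\ open.

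First, write $B=\bigcup_i C_i$ with the $C_i$ uniformly effectively closed and $\nu(C_i)$ uniformly computable. By replacing $C_i$ with $\bigcup_{j\leq i}C_j$, I may assume via Proposition~\ref{prop:sigma02nu}(\ref{prop:sigma02nu:1.5}) that the sequence is increasing, with measures still uniformly computable. Next, apply Proposition~\ref{prop:sigma02nu}(\ref{prop:sigma02nu:3}) uniformly in $i$ to obtain c.e.\ opens $U_i\supseteq C_i$ with $\nu(U_i)$ uniformly computable and $\nu(U_i\setminus C_i)<\epsilon\cdot 2^{-(i+2)}$. Set $U:=\bigcup_i U_i$. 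Then $U$ is c.e.\ open (uniformly from $B$ and $\epsilon$), covers $B$, and
\[
\nu(U\setminus B)\;\leq\;\sum_i \nu(U_i\setminus C_i)\;<\;\epsilon/2\;<\;\epsilon.
\]

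The main step\textemdash and the main obstacle, since a countable union of c.e.\ opens with uniformly computable measures need not itself have computable measure\textemdash is verifying that $\nu(U)$ is itself computable. Let $V_n:=\bigcup_{i<n}U_i$. By the algebra-of-sets fact used in the proof of Proposition~\ref{prop:sigma02nu}(\ref{prop:sigma02nu:1.5}) (namely, that events in the algebra generated by c.e.\ opens of uniformly computable $\nu$-measure have uniformly computable $\nu$-measure), $\nu(V_n)$ is uniformly computable in $n$. Since $V_n\uparrow U$, it suffices to drive $\nu(U)-\nu(V_n)$ effectively to zero. Observing that $V_n\supseteq U_{n-1}\supseteq C_{n-1}$, that $V_n$ already absorbs the first $n$ of the $U_i$, and writing $U_i=C_i\cup(U_i\setminus C_i)$, one obtains the inclusion
\[
U\setminus V_n\;\subseteq\;\bigcup_{i\geq n}(U_i\setminus V_n)\;\subseteq\;(B\setminus C_{n-1})\;\cup\;\bigcup_{i\geq n}(U_i\setminus C_i),
\]
using that $\bigcup_{i\geq n} C_i = B$ by monotonicity. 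Taking measures yields the effective bound
\[
\nu(U)-\nu(V_n)\;\leq\;\bigl(\nu(B)-\nu(C_{n-1})\bigr)+\epsilon\cdot 2^{-(n+1)}.
\]

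Both terms on the right tend to zero effectively: $\nu(B)$ is computable by hypothesis and $\nu(C_{n-1})$ is uniformly computable in $n$, so given $k$ we can compute an $n$ making the right-hand side smaller than $2^{-k}$. Combined with the uniform computability of $\nu(V_n)$, this exhibits $\nu(U)$ as a computable real, completing the proof. The only delicate point is this computability of $\nu(U)$; the decay rate $\epsilon\cdot 2^{-(i+2)}$ was chosen precisely so that the tail of the error budget is summable and decays fast enough to pair with the effective convergence $\nu(C_{n-1})\to\nu(B)$.
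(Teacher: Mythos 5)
Your proof is correct and follows essentially the same route as the paper's: cover each $C_i$ by a c.e.\ open $U_i$ with a summable, computable error budget via Proposition~\ref{prop:sigma02nu}(\ref{prop:sigma02nu:3}), take $U=\bigcup_i U_i$, and establish computability of $\nu(U)$ by effectively bounding $\nu(U)-\nu(V_n)$ for the finite unions $V_n$. Your geometric budget $\epsilon\cdot 2^{-(i+2)}$ together with the inclusion $U\setminus V_n\subseteq(B\setminus C_{n-1})\cup\bigcup_{i\geq n}(U_i\setminus C_i)$ yields a somewhat cleaner tail estimate than the paper's choice of $\delta_n$ tracking $\nu(C_n\setminus C_{n-1})$, but the substance is identical.
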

\begin{proof}
Let $B=\bigcup_n C_n$ where $C_n$ is uniformly effectively closed and $\nu(C_n)$ is computable; and where in addition $\nu(B)$ is computable. Without loss of generality $C_n$ is increasing. Define uniformly computable reals $0\leq \eta_n\leq 1$ by $\eta_0=\nu(C_0)$ and $\eta_n=\nu(C_n\setminus C_{n-1})$ for $n>0$. Since $\sum_n \eta_n = \nu(B)$, we have $\lim_n \eta_n=0$. Further $\nu(B\setminus C_m)=\sum_{n>m} \eta_n$. Let $\delta_n$ be a uniformly computable sequence of rationals with $0<\delta_n-\eta_n<2^{-(n+1)}$. Since $\delta_n<2^{-(n+1)}+\eta_n$, also $\lim_n \delta_n=0$. Further:
\begin{equation}\label{eqn:technprop:eqn:2}
\sum_{n>m} \delta_n=\sum_{n>m} \eta_n+\sum_{n>m} (\delta_n-\eta_n)<\nu(B\setminus C_m)+2^{-(m+1)}
\end{equation}
By Proposition~\ref{prop:sigma02nu}(\ref{prop:sigma02nu:3}), uniformly compute c.e. open $U_n\supseteq C_n$ with $\nu(U_n)$ computable and $\nu(U_n\setminus C_n)<\frac{\epsilon}{2}\cdot \delta_n$.  Let $U=\bigcup_n U_n$. This gives the required estimate
\begin{equation*}
\nu(U\setminus B)\leq \sum_n \nu(U_n\setminus C_n)< \sum_n \frac{\epsilon}{2}\cdot \delta_n \leq \sum_n \frac{\epsilon}{2}\cdot \eta_n + \sum_n \frac{\epsilon}{2} \cdot (\delta_n-\eta_n)\leq \epsilon
\end{equation*}
It remains to show that $\nu(U)$ is computable. Since $\nu(B)$ is computable and $\nu(U)=\nu(B)+\nu(U\setminus B)$, it suffices to show that $\nu(U\setminus B)$ is computable. For $n>0$: 
\begin{equation}\label{eqn:technprop:eqn:3}
\nu(U_n\setminus C_{n-1})\leq \nu(U_n\setminus C_n)+\nu(C_n\setminus C_{n-1})\leq \epsilon\cdot \delta_n+ \delta_n = (1+\epsilon)\cdot \delta_n
\end{equation}
Then for all $m> 0$ one has that $\nu(U\setminus C_m)-\nu((\bigcup_{n< m} U_n)\setminus C_m)$ is
\begin{align*}
\leq \;&   \nu((\bigcup_{n\geq m} U_n)\setminus C_m)  \leq \nu(U_m\setminus C_m)+\sum_{n>m} \nu(U_n\setminus C_{n-1}) \\  \leq \;&  \nu(U_m\setminus C_m)+\sum_{n>m} (1+\epsilon)\cdot \delta_n  < \epsilon\cdot \delta_m+(1+\epsilon)\cdot \nu(B\setminus C_m) +(1+\epsilon)\cdot 2^{-(m+1)}
\end{align*}
where in the last line we apply (\ref{eqn:technprop:eqn:3}) and then (\ref{eqn:technprop:eqn:2}). 
Then for all $m> 0$ we have that $\left|\nu(U\setminus B)-\nu((\bigcup_{n<m} U_n)\setminus C_m)\right|$ is:
\begin{align*}
& \leq \left|\nu(U\setminus B)-\nu(U\setminus C_m)\right|+\left| \nu(U\setminus C_m)-\nu((\cup_{n< m} U_n)\setminus C_m)\right| \\
\leq& \nu(B\setminus C_m)+\epsilon\cdot \delta_m+(1+\epsilon)\cdot \nu(B\setminus C_m) +(1+\epsilon)\cdot 2^{-(m+1)}
\end{align*}
Since the quantity in this last line is uniformly computable and goes to zero as $m$ goes to infinity, we can compute a subsequence which witnesses that $\nu(U\setminus B)$ is a computable real. We appeal here to the fact that $(\bigcup_{n<m} U_n)\setminus C_m$ has computable $\nu$-measure, since it is in the algebra generated by the uniformly computable sequence $U_n, Y\setminus C_n$ of c.e. opens with $\nu$-computable measure (cf. \cite[Proposition 2.8(2)]{Huttegger2024-zb}).
\end{proof}

From this, Proposition~\ref{prop:schnorrsigmazerotwonutest} easily follows. Proposition~\ref{prop:schnorrsigma0nu2} follows by going back and forth in the natural way between sequential Schnorr $\nu$-tests $U_n$ and the associated $\Sigma^{0,\nu}_2$-class $\bigcup_n (Y\setminus U_n)$ of $\nu$-measure one.

We can then derive the following characterization in terms of lsc functions:
\begin{prop}\label{prop:wr2propv2}
A point $y$ is in $\mathsf{SR}^{\nu}$ iff for every computable sequence of lsc functions $f_n:Y\rightarrow [0,\infty]$, with $\sup_{n\geq m} f_m$ an $L_1(\nu)$ Schnorr test uniformly in $m\geq 0$, and for every effectively closed $C$ with $\nu(C)$ computable and $f_n\rightarrow 0$ $\nu$-a.s. on $C$, one has that $y$ is in $C$ implies $\lim_n f_n(y)=0$.
\end{prop}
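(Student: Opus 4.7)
The plan is to prove both directions using the $\Sigma^{0,\nu}_2$-characterizations of Schnorr randomness in Propositions \ref{prop:schnorrsigmazerotwonutest} and \ref{prop:schnorrsigma0nu2}, together with a careful construction of an $L_1(\nu)$ Schnorr test for the forward direction.

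For the reverse direction, suppose $y \notin \mathsf{SR}^\nu$. By Proposition \ref{prop:schnorrsigma0nu2}, there is a $\Sigma^{0,\nu}_2$ class $B = \bigcup_i C_i$ of $\nu$-measure one missing $y$. Taking increasing unions, the sets $U_i := Y \setminus C_i$ form a decreasing sequence of c.e.\ opens with $\nu(U_i)$ uniformly computable and $y \in \bigcap_i U_i$. Setting $f_n := I_{U_n}$ and $C := Y$, the function $\sup_{k \geq n} f_k = I_{U_n}$ (since $U_n$ is decreasing) is an $L_1(\nu)$ Schnorr test uniformly in $n$, and $f_n \to 0$ $\nu$-a.s.\ on $Y$ since $\nu(\bigcap_n U_n) = 0$; but $f_n(y) = 1$ for all $n$, violating the right-hand condition.

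For the forward direction, set $g_m := \sup_{n \geq m} f_n$; since $g_m$ is decreasing and $0 \leq f_n \leq g_n$, it suffices to show $g_m(y) \to 0$ for $y \in \mathsf{SR}^\nu \cap C$. Fix $k$ and let $A_{m,k} := \{g_m > 2^{-k}\}$; by hypothesis $\nu(C \cap \bigcap_m A_{m,k}) = 0$. The plan is to construct, for each $k$, an $L_1(\nu)$ Schnorr test $H_k$ that takes value $+\infty$ on $C \cap \bigcap_m A_{m,k}$; then Schnorr randomness of $y$ forces $y \notin \bigcap_m A_{m,k}$ for every $k$, so $\inf_m g_m(y) = 0$.

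The main technical ingredient is the following lemma: for any bounded lsc $L_1(\nu)$ Schnorr test $p \leq M$ and effectively closed set $D$ with $\nu(D)$ computable, $\int p I_D \, d\nu$ is uniformly computable. Indeed, $p I_{Y \setminus D}$ is lsc as a product of nonneg lsc functions, so $\int p I_{Y \setminus D}$ is left-c.e., while $(M - p) I_D$ is a product of two nonneg bounded upper semicontinuous functions and hence upper semicontinuous, so $\int (M - p) I_D$ is right-c.e.; rearranging, $\int p I_D = M \nu(D) - \int (M - p) I_D$ is left-c.e.\ as well, and since both nonneg left-c.e.\ summands in $\int p = \int p I_D + \int p I_{Y \setminus D}$ add to the computable real $\int p$, both are computable. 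Applying this lemma, set $\phi_m := \min(2^k g_m, 1)$, a bounded lsc $L_1(\nu)$ Schnorr test uniformly in $m$ with norm $2^k \int g_m - 2^k \int (g_m - 2^{-k})^+$; the integral $\int \phi_m I_C$ is uniformly computable and decreases to zero by dominated convergence, so we can effectively pick $m_\ell$ with $\int \phi_{m_\ell} I_C < 2^{-\ell - 2}$. Using Proposition \ref{prop:sigma02nu}(\ref{prop:sigma02nu:3}), effectively pick a c.e.\ open $V_\ell \supseteq C$ with $\nu(V_\ell)$ computable and $\nu(V_\ell \setminus C) < 2^{-\ell - 2}$. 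Then $\psi_\ell := \phi_{m_\ell} I_{V_\ell}$ is lsc with $L_1$ norm less than $2^{-\ell - 1}$ and computable (by the lemma applied to $Y \setminus V_\ell$), and $H_k := \sum_\ell \psi_\ell$ is an $L_1(\nu)$ Schnorr test; on $C \cap \bigcap_m A_{m,k}$ every $\phi_{m_\ell}$ equals $1$ (since $g_{m_\ell} > 2^{-k}$) and $I_{V_\ell}$ equals $1$ (since $C \subseteq V_\ell$), so $H_k \equiv +\infty$, as required. The main obstacle is establishing the lemma, which relies on the interplay between upper and lower semi-continuous approximations and the trick that two nonneg left-c.e.\ quantities with computable sum are both computable.
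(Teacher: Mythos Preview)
Your proof is correct, but the forward direction takes a genuinely different route from the paper's.

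For the backward direction, the paper starts from a sequential Schnorr $\nu$-test $U_n$ and sets $f_n=I_{U_n}$, noting that $\sup_{n\geq m} f_n=I_{\bigcup_{n\geq m}U_n}$ is an $L_1(\nu)$ Schnorr test. Your route via Proposition~\ref{prop:schnorrsigma0nu2} is equally valid and arguably cleaner, since by taking $C_i$ increasing you get $\sup_{k\geq n} I_{U_k}=I_{U_n}$ directly with no tail-union bookkeeping.

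For the forward direction the approaches diverge. The paper fixes rational $\epsilon>0$ and, using the Miyabe-style level-set trick, computes for each $m$ a real $\epsilon_m\in(\tfrac{\epsilon}{2},\epsilon)$ such that the c.e.\ open $\{\sup_{n\geq m}f_n>\epsilon_m\}$ has uniformly computable $\nu$-measure. The complements $D_m$ are then effectively closed with computable measure, and together with a $\nu$-computable-basis decomposition of $Y\setminus C$ one assembles a $\Sigma^{0,\nu}_2$ class $V_\epsilon$ of $\nu$-measure one; Proposition~\ref{prop:schnorrsigma0nu2} forces $y\in V_\epsilon$, and Kurtz randomness plus $y\in C$ rules out the part of $V_\epsilon$ covering $Y\setminus C$, leaving $y\in D_m$ for some $m$. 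This is short and stays entirely within the $\Sigma^{0,\nu}_2$ framework developed just before the proposition.

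Your approach instead builds, for each $k$, an $L_1(\nu)$ Schnorr test $H_k$ that is $+\infty$ on $C\cap\bigcap_m A_{m,k}$, and then invokes the integral-test characterization of Schnorr randomness. The cost is your auxiliary lemma on the computability of $\int p\,I_D\,d\nu$ for bounded lsc Schnorr tests $p$ and effectively closed $D$ with computable measure (the argument via $(M-p)I_D$ being effectively usc and the two-left-c.e.-summands trick is correct, though the phrase ``product of two nonneg bounded upper semicontinuous functions'' glosses over the effective content, which does go through). The benefit is that you avoid the Miyabe level-set result and Proposition~\ref{prop:schnorrsigma0nu2} in this direction; moreover your lemma is a reusable fact in its own right. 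The paper's argument is more economical within the section's internal logic, but yours is self-contained in a different way.
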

\begin{proof}
For the forward direction, suppose that $f_n$ and $C$ are as specified; we want to show that $\lim_n f_n=0$ on $\mathsf{SR}^{\nu}\cap C$. Suppose that $\epsilon>0$ is rational. For each $m\geq 0$, uniformly compute a real $\epsilon_{m}$ in the interval $(\frac{\epsilon}{2}, \epsilon)$ such that the c.e. open $\{y: \sup_{n\geq m} f_n(y)>\epsilon_{m}\}$ has uniformly computable $\nu$-measure.\footnote{We can do this by a result of Miyabe \cite{Miyabe2013-wy}; for this specific form, see \cite[Proposition 2.21, Lemma 2.22]{Huttegger2024-zb}.\label{footnote:miyabeish}} Then its relative complement $D_{m}: = \{y: \sup_{n\geq m} f_n(y)\leq \epsilon_{m}\}$ is effectively closed and has uniformly computable $\nu$-measure as well. Finally, using a $\nu$-computable basis, let $Y\setminus C=\bigcup_m U_m$, where $U_m$ is a computable sequence of c.e. opens from the $\nu$-computable basis, and let $C_m$ be effectively closed superset of $U_m$ with the same $\nu$-measure. Consider the class $V_{\epsilon} = (\bigcup_{m} (D_{m}\cap C)) \cup (\bigcup_m C_m)$, which is an effective union of effectively closed classes each with uniformly computable measure. Since by hypothesis $f_n\rightarrow 0$ $\nu$-a.s. on $C$, there are $\nu(C)$-many $y$ in $C$ such that there is $m\geq 0$ such that for all $n\geq m$ one has $f_n(y)<\frac{\epsilon}{2}$, and so $\sup_{n\geq m} f_n(y)\leq \frac{\epsilon}{2}<\epsilon_{m}$ and so this $y$ would be in $D_{m}$. Hence $\nu(V_{\epsilon})=1$, so that $V_{\epsilon}$ is a $\Sigma^{0,\nu}_2$ class of $\nu$-measure one.  Let $y$ in $\mathsf{SR}^{\nu}\cap C$. Then by Proposition~\ref{prop:schnorrsigma0nu2}, $y$ is in $V_{\epsilon}$. But since $y$ is in $\mathsf{KR}^{\nu}\cap C$, we cannot have that $y$ is in $\bigcup_m C_m$. Hence rather we must have that $y$ is in $D_{m}\cap C$ for some $m\geq 0$, and so $\sup_{n\geq m} f_n(y)\leq \epsilon_{m}<\epsilon$.

The backward direction is easy since if $U_n$ is a sequential Schnorr $\nu$-test and we set $f_n=I_{U_n}$, then $\sup_{n\geq m} f_n =I_{\bigcup_{n\geq m} U_n}$ is an $L_1(\nu)$ Schnorr test.
\end{proof}

Now we turn to $L_1(\nu)$ computability (cf. \cite[\S{2.3}]{Huttegger2024-zb}):
\begin{prop}\label{prop:indicator} ($L_1(\nu)$ computability and indicator functions).
\begin{enumerate}[leftmargin=*]
    \item\label{prop:indicator:1} If $U$ is c.e. open $\nu(U)$ computable, then $I_U$ is $L_1(\nu)$ computable.
    \item\label{prop:indicator:2} If $C$ is effectively closed with $\nu(C)$ computable, then $I_C$ is $L_1(\nu)$ computable.
    \item\label{prop:indicator:3} If $B$ is $\Sigma^{0,\nu}_2$ then $I_B$ is $L_1(\nu)$ computable.
\end{enumerate}
\end{prop}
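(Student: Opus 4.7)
The unifying strategy for all three parts is to produce, in each case, a computable sequence of rational-valued step functions in the countable dense set of $L_1(\nu)$ — step functions over events in the algebra generated by a $\nu$-computable basis — whose $L_1(\nu)$ distance to $I_U$ (resp.\ $I_C$, $I_B$) can be made computably small. Since for any measurable events $A,A'$ one has $\|I_A - I_{A'}\|_{L_1(\nu)} = \nu(A \triangle A')$, and in our approximations one event will contain the other, the $L_1(\nu)$ error reduces to a difference of $\nu$-measures that we can make effectively small using computability of the target measure.

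For part (\ref{prop:indicator:1}), I would fix a $\nu$-computable basis and write $U = \bigcup_i U_{n_i}$ as an effective union of basis elements. Setting $V_k = \bigcup_{i<k} U_{n_i}$, one has that $V_k$ lies in the algebra generated by the $\nu$-computable basis, so $\nu(V_k)$ is uniformly computable (by Proposition~\ref{prop:sigma02nu}(\ref{prop:sigma02nu:1.5}), or directly by the inclusion–exclusion argument used throughout the paper), and $I_{V_k}$ belongs to the countable dense set of $L_1(\nu)$. Since $V_k \subseteq U$, we have $\|I_U - I_{V_k}\|_{L_1(\nu)} = \nu(U) - \nu(V_k)$. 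Because $\nu(U)$ is computable by hypothesis and $\nu(V_k) \nearrow \nu(U)$ with $\nu(V_k)$ uniformly computable, we can computably extract a subsequence $k_j$ with $\|I_U - I_{V_{k_j}}\|_{L_1(\nu)} < 2^{-j}$, which is precisely what it means for $I_U$ to be a computable point of $L_1(\nu)$.

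For part (\ref{prop:indicator:2}), the complement $Y \setminus C$ is c.e. open with computable measure $1 - \nu(C)$, so by (\ref{prop:indicator:1}), $I_{Y\setminus C}$ is $L_1(\nu)$ computable. The constant function $1$ is trivially $L_1(\nu)$ computable, and $L_1(\nu)$ is a computable Banach space, so the difference $I_C = 1 - I_{Y\setminus C}$ is $L_1(\nu)$ computable. For part (\ref{prop:indicator:3}), writing $B = \bigcup_n C_n$ with $C_n$ uniformly effectively closed and $\nu(C_n)$ uniformly computable, replace $C_n$ by $\bigcup_{m \leq n} C_m$ to assume the sequence is increasing; the resulting finite unions are still effectively closed with uniformly computable $\nu$-measure by Proposition~\ref{prop:sigma02nu}(\ref{prop:sigma02nu:1.5}). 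By part (\ref{prop:indicator:2}), $I_{C_n}$ is uniformly $L_1(\nu)$ computable. Then $\|I_B - I_{C_n}\|_{L_1(\nu)} = \nu(B) - \nu(C_n)$, and since $\nu(B)$ is computable by the definition of $\Sigma^{0,\nu}_2$, we extract a computable subsequence achieving error $< 2^{-j}$.

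The main conceptual point, rather than an obstacle, is being precise about what the countable dense set of $L_1(\nu)$ is: one needs the $V_k$ in part (\ref{prop:indicator:1}) to come from the algebra generated by the $\nu$-computable basis so that $I_{V_k}$ is literally a listed dense element, not just a function whose norm we can compute. The remaining care is routine bookkeeping in extracting the fast-converging subsequences, which is standard for computable points in computable Banach spaces.
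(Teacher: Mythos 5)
Your proposal is correct and follows essentially the same route as the paper's proof: part (1) by approximating $U$ from below by finite unions of basis elements (which lie in the countable dense set and have computable measure), part (2) by taking complements, and part (3) by approximating $B$ from below by the increasing effectively closed sets and invoking part (2) uniformly. The only cosmetic difference is which result you cite for the computability of measures of finite Boolean combinations of basis elements; the paper points to its earlier work rather than Proposition~\ref{prop:sigma02nu}, but the content is the same.
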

\begin{proof}
For (\ref{prop:indicator:1}), suppose that $U=\bigcup_n V_n$, where $V_n$ comes from a $\nu$-computable basis. Then $\|I_U-I_{\bigcup_{m<n} V_m}\|_1 =\nu(U)-\nu(\bigcup_{m<n} V_m)$ is computable and goes to zero. Then we can compute sequence $n_{\ell}$ such that $\|I_U-I_{\bigcup_{m<n_{\ell}} V_m}\|_1\leq 2^{-\ell}$. Hence $I_U$ is $L_1(\nu)$ computable since $I_{\bigcup_{m<n_{\ell}} V_m}$ comes from the countable dense set.  The proof of (\ref{prop:indicator:2}) follows by taking complements; the proof of (\ref{prop:indicator:3}) is similar to that of~(\ref{prop:indicator:1}).

\begin{proofdetail}
In more detail:

 For (\ref{prop:indicator:2}), let $U=Y\setminus C$ so that $\nu(U)$ is also computable. By  (\ref{prop:indicator:1}) one has that $I_U$ is $L_1(\nu)$ computable. Since $I_C=1-I_U$, we have that $I_C$ is also $L_1(\nu)$ computable.

 For (\ref{prop:indicator:3}), suppose that $B=\bigcup_n C_n$, where $C_n$ is effectively closed with $\nu(B), \nu(C_n)$ uniformly computable. Without loss of generality, the $C_n$ sequence is increasing. Then $\|I_B-I_{C_n}\|_1 =\nu(B)-\nu(C_n)$, which is computable and goes to zero. Then we can compute a subsequence $C_{n_m}$ such that $\|I_B-I_{C_{n_m}}\|_1\leq 2^{-m}$, and then by (\ref{prop:indicator:2}) we have that $I_B$ is $L_1(\nu)$ computable.
    
\end{proofdetail}

\end{proof}

One also has the following, whose proof is very simple:
\begin{prop}\label{prop:productsL1nu}
($L_1(\nu)$-computability and products of bounded functions).

If $f,g$ are $L_1(\nu)$ computable and $\left|f\right|, \left|g\right|\leq c$ then $fg$ is $L_1(\nu)$-computable.
\end{prop}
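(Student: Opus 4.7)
The plan is to build a fast $L_1(\nu)$-Cauchy sequence of rational step functions converging to $fg$, starting from such sequences for $f$ and $g$ individually. Since $f$ and $g$ are $L_1(\nu)$ computable, I can effectively obtain computable sequences $(f_n)_{n\geq 0}$ and $(g_n)_{n\geq 0}$ from the countable dense set of $L_1(\nu)$ (rational step functions with events in the algebra generated by a $\nu$-computable basis) satisfying $\|f_n-f\|_{L_1(\nu)}\leq 2^{-n}$ and $\|g_n-g\|_{L_1(\nu)}\leq 2^{-n}$. Replacing $c$ by a rational upper bound if needed, I may assume $c$ is rational.

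The key subtlety is that $f_n$ and $g_n$ need not be pointwise bounded by $c$, so the products $f_n g_n$ could have uncontrolled $L_1(\nu)$ norms. To remedy this I would truncate: set $\tilde{f}_n := T_c(f_n)$ and $\tilde{g}_n := T_c(g_n)$, where $T_c(t) := \max(-c,\min(c,t))$. Since $T_c$ is $1$-Lipschitz and fixes $f$ and $g$ pointwise (as $|f|,|g|\leq c$), one has $\|\tilde{f}_n-f\|_{L_1(\nu)}\leq 2^{-n}$ and likewise for $\tilde{g}_n$. Because $c$ is rational, $\tilde{f}_n$ and $\tilde{g}_n$ remain rational step functions supported on the same events, so they stay in the countable dense set with indices computable uniformly in $n$.

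I then consider the products $\tilde{f}_n \tilde{g}_n$, which are again rational step functions (using $I_A\cdot I_B = I_{A\cap B}$ and closure of the algebra under intersection), with indices computable from those of $\tilde{f}_n$ and $\tilde{g}_n$. A standard telescoping estimate, applying the pointwise bounds $|\tilde{f}_n|\leq c$ and $|g|\leq c$, gives
\[
\|\tilde{f}_n\tilde{g}_n-fg\|_{L_1(\nu)} \;\leq\; c\,\|\tilde{g}_n-g\|_{L_1(\nu)} + c\,\|\tilde{f}_n-f\|_{L_1(\nu)} \;\leq\; 2c\cdot 2^{-n}.
\]
Passing to a fast computable subsequence yields an $L_1(\nu)$-Cauchy sequence of rational step functions converging to $fg$ at rate $2^{-n}$, witnessing the $L_1(\nu)$-computability of $fg$.

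The proof really is routine, as advertised in the paper; the only point that is not purely formal is the truncation step, which is needed to keep the approximating step functions uniformly bounded without leaving the countable dense set. Everything else is a triangle-inequality manipulation.
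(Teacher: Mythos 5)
Your proof is correct. The paper omits the argument entirely (declaring it ``very simple''), and your proof is the standard one the authors surely intend: approximate $f$ and $g$ by rational step functions, truncate at a rational bound $\geq c$ using the $1$-Lipschitz map $T_c$ (which fixes $f$ and $g$ a.e.) so the approximants stay uniformly bounded and remain in the countable dense set, and then close with the telescoping estimate $\|\tilde f_n\tilde g_n - fg\|_{L_1(\nu)}\leq c\|\tilde g_n-g\|_{L_1(\nu)}+c\|\tilde f_n-f\|_{L_1(\nu)}$.
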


The following result is one of the few pointwise results which we need:
\begin{prop}\label{prop:miyabeish}
Suppose $g,h$ are two $L_1(\nu)$ Schnorr tests. Suppose $f=g-h$ and suppose $f\geq 0$ $\nu$-a.s. 

Suppose that $a_n<b_n$ is a computable sequence of non-negative reals. Then: 
\begin{enumerate}[leftmargin=*]
    \item\label{prop:miyabeish:1} There is a computable sequence of computable reals $r_n$ in $(a_n, b_n)$ and a computable sequence of $\Sigma^{0,\nu}_2$ classes $B_n$ such that $B_n=f^{-1}(r_n,\infty]$ on~$\mathsf{SR}^{\nu}$.
    \item\label{prop:miyabeish:2} One has $g\geq h$ on~$\mathsf{SR}^{\nu}$.
    \item\label{prop:miyabeish:3} There is a computable sequence of computable reals $r_n$ in $(a_n, b_n)$ and a computable sequence of $\Sigma^{0,\nu}_2$ classes $B_n$ such that $B_n=f^{-1}[0,r_n)$ on~$\mathsf{SR}^{\nu}$.
\end{enumerate}
Moreover, the sequences $r_n$ in (\ref{prop:miyabeish:1}), (\ref{prop:miyabeish:3}) can be chosen to be the same. 

Finally, the result is uniform, in that if $g_i,h_i$ is a computable sequence of uniformly computable $L_1(\nu)$ Schnorr tests and we set $f_i=g_i-h_i$ and suppose $f\geq 0$ $\nu$-a.s., then we can find one and the same sequence $r_n$ which works for each~$i$.
\end{prop}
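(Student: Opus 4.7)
The plan rests on the Miyabe-type level-selection result (cf.\ footnote~\ref{footnote:miyabeish}): for any $L_1(\nu)$ computable function $\phi$, this produces a uniformly computable dense sequence of computable reals $r$ at which $\nu(\{\phi > r\})$ is computable and $\nu(\{\phi = r\}) = 0$. Combined with Proposition~\ref{prop:sigma02nu}, this will let us express each $B_n$ as an effective union of $\Sigma^{0,\nu}_2$ summands whose total $\nu$-measure coincides, off a $\nu$-null set, with the desired superlevel or sublevel set of $f$.

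For part~(\ref{prop:miyabeish:1}), I would choose $r_n \in (a_n, b_n)$ computable with $\nu(\{f > r_n\})$ computable, and pick uniformly computable dense sequences of positive rationals $q_k, q'_k$ at which $\nu(\{g > r_n + q_k\})$ and $\nu(\{h > q'_k\})$ are computable. Setting
$$B_n \;=\; \bigcup_{(k,k')\,:\,q'_{k'} < q_k} \bigl(\{g > r_n + q_k\} \cap \{h \leq q'_{k'}\}\bigr),$$
each summand is $\Sigma^{0,\nu}_2$ by Proposition~\ref{prop:sigma02nu}, and since $\nu(B_n) = \nu(\{f > r_n\})$ (the two classes differ only on the $\nu$-null set $\{g = \infty\} \cup \{h = \infty\}$) the total $\nu$-measure is computable, so $B_n$ itself is $\Sigma^{0,\nu}_2$. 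On $\mathsf{SR}^{\nu}$, where $g$ and $h$ are finite, the containment $B_n \subseteq \{f > r_n\}$ is immediate from $q'_{k'} < q_k$, while the converse uses density to interpolate $h(y) \leq q'_{k'} < q_k < g(y) - r_n$ whenever $f(y) > r_n$.

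Part~(\ref{prop:miyabeish:2}) follows from the dual identity $\{g < h\} = \bigcup_k (\{g \leq s_k\} \cap \{h > s_k\})$ taken over a uniformly computable dense good sequence $s_k$: each summand is $\Sigma^{0,\nu}_2$, and since $\{g < h\}$ agrees $\nu$-a.s.\ with $\{f < 0\}$, the total $\nu$-measure is zero. Applying Proposition~\ref{prop:schnorrsigmazerotwonutest} with this class as a constant sequence forces $\mathsf{SR}^{\nu}$ to miss $\{g < h\}$. Part~(\ref{prop:miyabeish:3}) is then symmetric to~(\ref{prop:miyabeish:1}): on $\mathsf{SR}^{\nu}$ (where $f \geq 0$ by (\ref{prop:miyabeish:2})) use
$$\{f < r_n\} \;=\; \bigcup_{(j,j')\,:\, t_{j'} < s_j} \bigl(\{g \leq t_{j'}\} \cap \{h > s_j - r_n\}\bigr),$$
with $s_j, t_{j'}$ good and the good sequence for $h$ shifted by $r_n$. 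To reuse the $r_n$ from~(\ref{prop:miyabeish:1}), I further restrict it to a continuity point of $f$'s distribution, so $\nu(\{f = r_n\}) = 0$ and $\nu(\{f < r_n\}) = 1 - \nu(\{f > r_n\})$ is computable.

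The main obstacle is the closing uniform clause, which demands a single $r_n$ good for every $f_i$ simultaneously. This requires a uniform refinement of Miyabe's selection: for each $i, \ell$, effectively enumerate a finite cover by rational intervals of those locations where the survival function of $f_i$ drops by at least $2^{-\ell}$ (obtainable from the uniformly computable upper and lower approximations to $\nu(\{f_i \leq r\})$); then select $r_n \in (a_n, b_n)$ outside the countable union of all such intervals over $i, \ell$ by a diagonal construction. Since this union has arbitrarily small Lebesgue measure, effective selection succeeds, yielding a single sequence $r_n$ that is good for every $f_i$.
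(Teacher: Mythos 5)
Your proposal is correct and matches the paper's proof in its essentials: both establish (\ref{prop:miyabeish:1}) and (\ref{prop:miyabeish:3}) by selecting dense computable sequences of thresholds at which the superlevel sets of $g$ and of $h$ (suitably shifted by $r_n$) have computable $\nu$-measure, and then writing $f^{-1}(r_n,\infty]$ (resp.\ $f^{-1}[0,r_n)$) as an effective union, over interpolating pairs of thresholds, of intersections of a c.e.\ open with an effectively closed set, with Proposition~\ref{prop:sigma02nu} and the computability of $\nu(f^{-1}(r_n,\infty])$ supplying the $\Sigma^{0,\nu}_2$ structure. The only departures are minor: you prove (\ref{prop:miyabeish:2}) directly from the null $\Sigma^{0,\nu}_2$ decomposition of $\{g<h\}$ whereas the paper derives it from part (\ref{prop:miyabeish:1}) applied to shifted tests, and you sketch a construction of the uniform threshold-selection step where the paper simply invokes the cited Miyabe-type lemma.
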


\begin{proof}
For (\ref{prop:miyabeish:1}), the three $\nu$-a.s. non-negative $L_1(\nu)$ computable functions $f,g,h$ are such that their pushforwards $f\#\nu, g\#\nu, h\#\nu$ are computable probability measures on $\mathbb{R}^{\geq 0}$. Using the Effective Baire Category Theorem, we can compute a uniformly computable sequence of reals $r_i$, dense in $\mathbb{R}^{>0}$, such that the events $f^{-1}(r_i,\infty]$ and  $g^{-1}(r_i,\infty]$ and $h^{-1}(r_i,\infty]$ have uniformly computable $\nu$-measure.\footnote{See footnote~\ref{footnote:miyabeish}. As one can see from the references there, one can further organize that $f^{-1}(r_i,\infty]$ has the same $\nu$-measure as $f^{-1}[r_i,\infty]$; this additional information gets used in (\ref{prop:miyabeish:3}).} 
By doing this again for the non-negative $L_1(\nu)$ computable functions $f+r_i,g+r_i,h+r_i$, we can compute a uniformly computable sequence of reals $s_k$, dense in $\mathbb{R}^{>0}$, such that the events
$(f+r_i)^{-1}(s_k,\infty]$ and  $(g+r_i)^{-1}(s_k,\infty]$ and $(h+r_i)^{-1}(s_k,\infty]$ have computable $\nu$-measure, uniformly in $i,k\geq 0$. Define c.e. index set $I=\{(j,k): s_k<r_j\}$. For $i\geq 0$ and $(j,k)$ in $I$, define $\Sigma^{0,\nu}_2$ class $B_{i,j,k}=g^{-1}(r_j,\infty]\cap (h+r_i)^{-1}[0,s_k]$, and then define $B_i:=\bigcup_{(j,k)\in I} B_{i,j,k}$. Then on $\mathsf{SR}^{\nu}$ we have the identity $f^{-1}(r_i,\infty]=B_i$.
\begin{proofdetail}
\begin{itemize}[leftmargin=*]
    \item Suppose $y$ in $\mathsf{SR}^{\nu}$ and $y$ is in $f^{-1}(r_i,\infty]$. Then $f(y)>r_i$. Then $g(y)-h(y)>r_i$. Then $g(y)>h(y)+r_i$. By  the density of the $r_j,s_k$ sequence on $\mathbb{R}^{>0}$, one has that there is $j,k\geq 0$ with $g(y)>r_j>s_k>h(y)+r_i$. Then $(j,k)$ is in $I$ and $g(y)>r_j$ and $s_k>h(y)+r_i$. Then also $s_k\geq h(y)+r_i$. Hence $y$ is in $B_{i,j,k}$ and so in $B_i$.
    \item Suppose that $y$ is in $\mathsf{SR}^{\nu}$ and $y$ is in $B_i$. Choose $(j,k)$ in $I$ with $y$ in $B_{i,j,k}$. Then $g(y)>r_j>s_k$ and $h(y)+r_i\leq s_k$. Then $g(y)>r_j>s_k\geq h(y)+r_i$. Then $g(y)-h(y)>r_i$. Then $f(y)>r_i$.
\end{itemize}    
\end{proofdetail}
Since $f^{-1}(r_i,\infty]$ has $\nu$-computable measure, so does $B_i$, and hence it too is a $\Sigma^{0,\nu}_2$ class by Proposition~\ref{prop:sigma02nu}(\ref{prop:sigma02nu:1.5}). Then we can finish by using the density of the $r_i$ sequence on $\mathbb{R}^{>0}$ to compute a subsequence $a_n<r_{i_n}<b_n$. 

For (\ref{prop:miyabeish:2}), fix $s>0$ be rational and let $g_s=g+s, h_s=h+s, f_s=g_s-h_s$. Since $f\geq 0$ $\nu$-a.s. we have $f_s\geq s$ $\nu$-a.s. Let $b_n=s$ and let $a_n$ converge up to $s$. Apply part~(\ref{prop:miyabeish:1}) to $g_s, h_s, f_s$  and $(a_n, b_n)$ to get a computable sequence $r_n$ and $B_n$. Then each $B_n$ has $\nu$-measure one, and so $\mathsf{SR}^{\nu}$ is a subset of $\bigcap_n B_n$ by Proposition~\ref{prop:schnorrsigma0nu2}. Then $f_s\geq s$ on $\mathsf{SR}^{\nu}$, and so $f\geq 0$ on $\mathsf{SR}^{\nu}$.

For (\ref{prop:miyabeish:3}), the argument is just like (\ref{prop:miyabeish:1}) but with the inequalities reversed. 

\begin{proofdetail}
In more detail, let $r_i, s_k$ be just as in the proof of (\ref{prop:miyabeish:1}). Define c.e. index set $I^{\prime}=\{(j,k): s_k>r_j\}$. For $i\geq 0$ and $(j,k)$ in $I^{\prime}$, define $\Sigma^{0,\nu}_2$ class $B_{i,j,k}^{\prime}=g^{-1}[0,r_j]\cap (h+r_i)^{-1}(s_k,\infty]$, and then define $B_i^{\prime}:=\bigcup_{(j,k)\in I^{\prime}} B_{i,j,k}^{\prime}$. Then on $\mathsf{SR}^{\nu}$ we have the identity $f^{-1}[0,r_i)=B_i^{\prime}$:
\begin{itemize}[leftmargin=*]
    \item Suppose $y$ in $\mathsf{SR}^{\nu}$ and $y$ is in $f^{-1}[0,r_i)$. Then $f(y)<r_i$. Then $g(y)-h(y)<r_i$. Then $g(y)<h(y)+r_i$. By  the density of the $r_j,s_k$ sequence on $\mathbb{R}^{>0}$, one has that there is $j,k\geq 0$ with $g(y)<r_j<s_k<h(y)+r_i$. Then $(j,k)$ is in $I^{\prime}$ and $g(y)<r_j$ and $s_k<h(y)+r_i$. Then also $g(y)\leq r_j$. Hence $y$ is in $B_{i,j,k}^{\prime}$ and so in $B_i^{\prime}$.
    \item Suppose that $y$ is in $\mathsf{SR}^{\nu}$ and $y$ is in $B_i^{\prime}$. Choose $(j,k)$ in $I^{\prime}$ with $y$ in $B_{i,j,k}^{\prime}$. Then $g(y)\leq r_j<s_k$ and $h(y)+r_i> s_k$. Then $g(y)\leq r_j<s_k<h(y)+r_i$. Then $g(y)-h(y)<r_i$. Then $f(y)<r_i$.
\end{itemize}
 Since $f^{-1}[0,r_i)$ has $\nu$-computable measure, so does $B_i^{\prime}$, and hence it too is a $\Sigma^{0,\nu}_2$ class by Proposition~\ref{prop:sigma02nu}(\ref{prop:sigma02nu:1.5}). Then we can finish by using the density of the $r_i$ sequence on $\mathbb{R}^{>0}$ to compute a subsequence $a_n<r_{i_n}<b_n$.   
\end{proofdetail}
\end{proof}

\section{Computable Polish space structure on the product}\label{sec:productspace}

We begin by showing:

\begin{prop}\label{prop:prodmcomputable} (The joint distribution is a computable probability measure).

$\prodm$ is a computable probability measure on $\prodspace$.
\end{prop}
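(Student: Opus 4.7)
The product $\prodspace = \pspace \times \sspace$ carries a natural computable Polish space structure with basic open rectangles $V \times [\sigma]$, where $V$ ranges over a fixed $\prior$-computable basis $V_0,V_1,\ldots$ of $\pspace$ and $\sigma$ ranges over $T$. To show that $\prodm$ is a computable probability measure, it suffices to verify that $\prodm(W)$ is uniformly left-c.e.\ on c.e.\ opens $W \subseteq \prodspace$ and that $\prodm(\prodspace)=1$. The total-mass statement is immediate from (\ref{eqn:formula:int:mu}), since $\pmape{\sspace}{\psvar}=1$ for $\prior$-a.s.\ $\psvar$ by Definition~\ref{defn:assum}(\ref{assum2.5}).

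For the main estimate, enumerate an arbitrary c.e.\ open as $W = \bigcup_n V_n \times [\sigma_n]$, so that $\prodm(W) = \sup_N \prodm\bigl(\bigcup_{n<N} V_n \times [\sigma_n]\bigr)$; it then suffices to uniformly compute $\prodm$ on finite unions of basic rectangles. The step I would do first is to extend each $\sigma_n$ ($n<N$) to all of its extensions in $T$ of length $\ell = \max_{n<N}|\sigma_n|$. This rewrites the union as a disjoint union $\bigsqcup_\tau W_\tau \times [\tau]$ indexed by the finitely many length-$\ell$ strings $\tau \in T$ that occur, where $W_\tau$ is the union of those $V_n$ with $\sigma_n$ a prefix of $\tau$.

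For each such rectangle, formula (\ref{eqn:formula:int:mu}) gives $\prodm(W_\tau \times [\tau]) = \int I_{W_\tau}(\psvar)\,\pmape{\tau}{\psvar}\,d\prior(\psvar)$. Since $W_\tau$ is a finite union of basic c.e.\ opens from the $\prior$-computable basis, and finite intersections of these are $\Sigma^{0,\prior}_2$ by Proposition~\ref{prop:sigma02nu}(\ref{prop:sigma02nu:1.5}) (hence have computable $\prior$-measure), inclusion-exclusion gives $\prior(W_\tau)$ uniformly computable, so $I_{W_\tau}$ is $L_1(\prior)$-computable by Proposition~\ref{prop:indicator}. By Definition~\ref{defn:assum}(\ref{assum2.5}), $\psvar \mapsto \pmape{\tau}{\psvar}$ is $L_1(\prior)$-computable uniformly in $\tau$ and $\prior$-a.s.\ bounded by $1$; the same bound holds for $I_{W_\tau}$. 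Proposition~\ref{prop:productsL1nu} then yields that $I_{W_\tau} \cdot \pmape{\tau}{\cdot}$ is $L_1(\prior)$-computable, whence its integral $\prodm(W_\tau \times [\tau])$ is a uniformly computable real. Summing over the finitely many $\tau$ produces the desired uniform computation.

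The step I expect to require the most care is the bookkeeping in the middle paragraph: without the refinement to a common length $\ell$, a naive inclusion-exclusion over the rectangles $V_n \times [\sigma_n]$ would force one to analyze intersections like $V_{n_1} \cap \cdots \cap V_{n_k}$, which need not themselves lie in the $\prior$-computable basis. Passing to a disjoint decomposition makes the second factor constant on each piece and reduces the first-factor analysis to finite unions of basis elements, at which point the $L_1(\prior)$-toolkit of \S\ref{sec:sigmazerotwo} suffices.
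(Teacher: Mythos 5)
Your decomposition is sound in outline but contains one concrete error: the claim that refining to the common length $\ell=\max_{n<N}\left|\sigma_n\right|$ yields ``finitely many length-$\ell$ strings $\tau\in T$ that occur.'' The tree $T$ is a computable subtree of $\mathbb{N}^{<\mathbb{N}}$ and may branch infinitely (the case $\sspace=\mathbb{N}^{\mathbb{N}}$ is the one relevant to Freedman's theorem), so a string $\sigma_n$ with $\left|\sigma_n\right|<\ell$ generally has infinitely many length-$\ell$ extensions in $T$. Your disjoint union $\bigsqcup_\tau W_\tau\times[\tau]$ is therefore a countably infinite one, and ``summing over the finitely many $\tau$'' does not produce a computable real for $\prodm\bigl(\bigcup_{n<N}V_n\times[\sigma_n]\bigr)$; it produces a computably enumerable series of uniformly computable non-negative terms, i.e.\ only a left-c.e.\ real. (Grouping the $\tau$'s by the finitely many distinct values of $W_\tau$ does not help, since the second coordinate then becomes an infinite union of clopens whose likelihood-mass is an infinite sum with no computable modulus.) The gap is repairable in one line: left-c.e.\ is all you need, because $\prodm(W)=\sup_N\prodm\bigl(\bigcup_{n<N}V_n\times[\sigma_n]\bigr)$ and a supremum of uniformly left-c.e.\ reals is left-c.e.; so replace the claim that the finite-union measures are computable by the observation that the partial sums over a computable enumeration of the $\tau$'s witness their left-c.e.-ness.

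With that repair your argument is correct and genuinely different from the paper's. You disjointify only in the sample-space coordinate, where set differences of basic clopens decompose into basic clopens, and keep finite unions in the parameter coordinate, where they remain c.e.\ open with computable $\prior$-measure (your inclusion--exclusion via closure of $\Sigma^{0,\prior}_2$ classes under finite intersection is fine, and matches the paper's appeal to the algebra generated by a $\prior$-computable basis). The paper instead disjointifies over the rectangle index, which forces it to take complements $U_i\setminus C_j$ in the parameter space using the effectively closed supersets $C_j$; the resulting union is disjoint only on $\mathsf{KR}^{\prior}\times\sspace$, an extra subtlety your route avoids. Both proofs bottom out in the same computation: $I_{W}\cdot\pmape{\tau}{\cdot}$ is $L_1(\prior)$ computable by Propositions~\ref{prop:indicator} and~\ref{prop:productsL1nu}, so each rectangle has uniformly computable $\prodm$-measure.
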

\begin{proof}
We must show that for c.e. open $W\subseteq \prodspace$ we have that $\prodm(W)$ is left-c.e. We can write $W=\bigcup_i (U_i\times [\sigma_i])$, where $U_i\subseteq \pspace$ is a computable sequence from a $\prior$-computable basis with effectively closed $C_i\supseteq U_i$ with the same $\prior$-measure, and $\sigma_i$ is a computable sequence from $T$. It suffices to show that for all $n$, we have that $W_n=\bigcup_{i<n} (U_i\times [\sigma_i])$ has uniformly left-c.e. $\prodm$-measure. For this, in turn, it suffices to show the same of $W_{n,i}:=\big(U_i\times [\sigma_i]\big)\setminus \bigcup_{j<i} \big( U_j\times [\sigma_j] \big)$ for each $i<n$.

Fix $i<n$ and for $j<i$, define computable set $I_{i,j}\subseteq T$ so that $[\sigma_i]\setminus [\sigma_j]=\bigcup_{\eta \in I_{i,j}} [\eta]$. Since $I_{i,j}$ is computable let $I_{i,j,s}$ be a computable non-decreasing sequence such that $I_{i,j}=\bigcup_s I_{i,j,s}$ and $J_{i,j,s}:=I_{i,j,s+1}\setminus I_{i,j,s}$ has at most one element.

For each $\tau$ in $2^{<\mathbb{N}}$ with length $i>0$ and each $\varsigma$ in $\mathbb{N}^{<\mathbb{N}}$ of  length $i$, define the following c.e. open $U_{\tau}\subseteq \pspace$ and c.e. open $V_{\tau,\varsigma}\subseteq \sspace$, wherein $j$ ranges over $<i$:
\begin{equation*}
U_{\tau}  = \big(\bigcap_{\tau(j)=1} U_i\setminus C_j\big) \cap \big(\bigcap_{\tau(j)=0} U_i\cap U_j\big), \hspace{2mm} V_{\tau,\varsigma}  = \big(\bigcap_{\tau(j)=1} [\sigma_i]\big) \cap \big(\bigcap_{\tau(j)=0, \eta\in J_{i,j,\varsigma(j)}} [\eta]\big)
\end{equation*}
Let $I_i$ be the computable index set $\{(\tau, \varsigma)\in 2^i\times \mathbb{N}^i: \forall \; j<i \; (\tau(j)=1\rightarrow \varsigma(j)=0)\}$. Then on $\mathsf{KR}^{\prior}\times \sspace$ one has $W_{n,i}=\bigsqcup_{(\tau,\varsigma)\in I_i} (U_{\tau}\times V_{\tau,\varsigma})$, where the disjoint union is disjoint only on $\mathsf{KR}^{\prior}\times \sspace$. Then $\prodm(W_{n,i})=\sum_{(\tau,\varsigma)\in I_i} \prodm(U_{\tau}\times V_{\tau,\varsigma})$.

Hence, we have reduced the problem to showing that $\prodm(U\times [\sigma])$ is left-c.e. when $U\subseteq \pspace$ is c.e. open. Let $U=\bigcup_i U_i$, where $U_i$ is an increasing sequence of c.e. opens with uniformly $\prior$-computable measure. By Definition~\ref{defn:assum}(\ref{assum2.5}), one has that $\psvar\mapsto \pmape{\sigma}{\psvar}$ is uniformly $L_1(\prior)$ computable, and hence so is $\psvar\mapsto I_{U_i}(\psvar)\cdot \pmape{\sigma}{\psvar}$ by Proposition \ref{prop:indicator}(\ref{prop:indicator:1}) and Proposition~\ref{prop:productsL1nu}. Then $\prodm(U\times [\sigma])>q$ iff there is $i\geq 0$ with $\prodm(U_i\times [\sigma])>q$, which by definition happens iff $\int_{U_i} \pmape{\sigma}{\psvar} \; d\prior(\psvar)>q$. Since $\int_{U_i} \pmape{\sigma}{\psvar} \; d\prior(\psvar)$ is a computable real uniformly in $i\geq 0$, we are done and indeed $\prodm(U\times [\sigma])$ is left-c.e.
\end{proof}

  As the last paragraph makes clear, $\prodm$ is computable if $\int_{U_i} \pmape{\sigma}{\psvar} \; d\prior(\psvar)$ is merely left-c.e. rather than computable. We do not know of an interesting condition, for which this holds, which is broader than $L_1(\prior)$ computability. But $L_1(\prior)$ computability does get us the following natural $\prodm$-computable basis:

\begin{prop}\label{prop:thebasisonprod}
(The $\prodm$-computable basis).

If $U\subseteq \pspace$ is c.e. open with $\prior(U)$ computable, then $\prodm(U\times [\sigma])$ is computable, uniformly in $\sigma$ from $T$.

This implies that $\{U_i\times [\sigma]:  i\in I, \sigma\in T\}$ is a $\prodm$-computable basis for any $\prior$-computable basis $\{U_i: i\in I\}$.

Further, if $C_i\supseteq U_i$ is the associated effectively closed superset with $C_i,U_i$ of same $\prior$-measure, then $C_i\times [\sigma]\supseteq U_i\times [\sigma]$ is effectively closed superset with $C_i\times [\sigma],U_i\times [\sigma]$ of same $\prodm$-measure.
\end{prop}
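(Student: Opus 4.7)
The plan is to bootstrap from the proof of Proposition~\ref{prop:prodmcomputable}, which already showed that $\prodm(U\times [\sigma]) = \int_U \pmape{\sigma}{\psvar}\, d\prior(\psvar)$ is left-c.e. uniformly in c.e.\ open $U$ and $\sigma\in T$. To upgrade to full computability under the added hypothesis that $\prior(U)$ is computable, I would exploit the $L_1(\prior)$-computability of the likelihood. By Definition~\ref{defn:assum}(\ref{assum2.5}), uniformly in $\sigma\in T$ I can produce a computable sequence $f_n$ from the countable dense set of $L_1(\prior)$ with $\|f_n-\pmape{\sigma}{\cdot}\|_{L_1(\prior)}\leq 2^{-n}$. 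Since $\|I_U\|_\infty\leq 1$, the triangle inequality gives
\[
\Bigl|\,\prodm(U\times [\sigma]) - \textstyle\int f_n\cdot I_U\, d\prior\,\Bigr|\leq \|f_n-\pmape{\sigma}{\cdot}\|_{L_1(\prior)}\leq 2^{-n}.
\]
The step function $f_n$ is a rational combination of indicators of events $A_j$ drawn from the algebra generated by a $\prior$-computable basis, so $\int f_n\cdot I_U\, d\prior$ is a rational combination of measures $\prior(A_j\cap U)$. Since $U$ itself has computable $\prior$-measure, all such intersections lie in an algebra of events of uniformly computable $\prior$-measure (using Proposition~2.8(2) of \cite{Huttegger2024-zb}), so these integrals are computable uniformly in $n$. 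Combining the two bounds yields a computable sequence converging effectively to $\prodm(U\times [\sigma])$, as desired.

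The basis claim is then essentially formal: any c.e.\ open $W\subseteq \prodspace$ is an effective union of rectangles of the form $V\times [\sigma]$ with $V\subseteq\pspace$ c.e.\ open and $\sigma\in T$, because $\{[\sigma]:\sigma\in T\}$ is a clopen basis for $\sspace=[T]$. Each such $V$ is in turn an effective union $V=\bigcup_j U_{i_j}$ of elements of the given $\prior$-computable basis, so $W=\bigcup_{j,\sigma}(U_{i_j}\times [\sigma])$ is an effective union of rectangles $U_i\times [\sigma]$. Uniform computability of the measures $\prodm(U_i\times [\sigma])$ is exactly the first part.

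For the last clause, note that $C_i\times[\sigma]$ is effectively closed as $(C_i\times \sspace)\cap (\pspace\times [\sigma])$, an intersection of an effectively closed set with a clopen set (the latter using that $T$ is computable with no dead ends). The measure equality reduces to
\[
\prodm\bigl((C_i\setminus U_i)\times[\sigma]\bigr) = \int_{C_i\setminus U_i}\pmape{\sigma}{\psvar}\, d\prior(\psvar)=0,
\]
which holds because $\prior(C_i\setminus U_i)=0$ by the choice of $C_i$ associated to $U_i$, and $\psvar\mapsto \pmape{\sigma}{\psvar}$ being in $L_1(\prior)$ integrates to zero over any $\prior$-null set.

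I do not anticipate a substantive obstacle here: the single nontrivial step is the first paragraph, where one must control $\int f\cdot I_U\, d\prior$ from both sides despite $f=\pmape{\sigma}{\cdot}$ being unbounded. The boundedness of $I_U$ by $1$, together with the $L_1(\prior)$-norm approximation $f_n\to f$, is precisely what makes the simple two-sided error estimate work and promotes the left-c.e.\ bound from Proposition~\ref{prop:prodmcomputable} to computability.
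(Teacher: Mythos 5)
Your proof is correct and follows essentially the same route as the paper, whose one-line proof defers to the last paragraph of the proof of Proposition~\ref{prop:prodmcomputable}: there the computability of $\prodm(U\times[\sigma])=\int_U \pmape{\sigma}{\psvar}\,d\prior(\psvar)$ is obtained from the $L_1(\prior)$ computability of $I_U\cdot\pmape{\sigma}{\cdot}$ (via Propositions~\ref{prop:indicator} and~\ref{prop:productsL1nu}), which is exactly what your step-function approximation with the $\|I_U\|_\infty\leq 1$ error bound inlines. Your treatment of the basis clause and of the effectively closed companions $C_i\times[\sigma]$ is also what the paper intends but leaves implicit.
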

\begin{proof}
By the same argument as within the last paragraph of the previous proof.
\end{proof}

Since projection maps are computable continuous, we have::

\begin{rmk}
The pushforward $\prodm_{\prss}$ is a computable probability measure on the sample space $\sspace$. The $\prodm_{\prss}$-computable basis is just the basic clopens $[\sigma]$ for $\sigma$ in $T$. 
\end{rmk}

\begin{prop}\label{prop:elefact}

$\mathsf{KR}^{\prodm}\subseteq \mathsf{KR}^p\times \sspace$ and $\mathsf{KR}^{\prodm}\subseteq \pspace\times \mathsf{KR}^{\prodm_{\prss}}$, and likewise for Schnorr.
\end{prop}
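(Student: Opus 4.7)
The plan is to argue the contrapositive of each inclusion by lifting tests from the marginals to the joint measure via the projections $\prps:\prodspace\rightarrow \pspace$ and $\prss:\prodspace\rightarrow \sspace$. These projections are manifestly computable continuous, since the inverse image of a basic c.e. open is a basic c.e. open of the product. The key bookkeeping identity, which I would record first, is that $\prodm(U\times \sspace)=\prior(U)$ for any Borel $U\subseteq \pspace$; this follows immediately from formula~(\ref{eqn:formula:int:mu}) applied to the indicator of $U\times \sspace$, since $\pmape{\sspace}{\psvar}=1$ for $\prior$-a.s. many $\psvar$ by Definition~\ref{defn:assum}(\ref{assum2.5}). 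The dual identity $\prodm(\pspace\times V)=\prodm_{\prss}(V)$ is by definition of pushforward.

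For the Kurtz inclusions, suppose $(\psvar,\ssvar)\in \mathsf{KR}^{\prodm}$ but $\psvar\notin \mathsf{KR}^{\prior}$. Pick a c.e. open $U\subseteq \pspace$ with $\prior(U)=1$ and $\psvar\notin U$. Then $W:=U\times \sspace$ is c.e. open in $\prodspace$ with $\prodm(W)=\prior(U)=1$, yet $(\psvar,\ssvar)\notin W$, a contradiction. This yields $\mathsf{KR}^{\prodm}\subseteq \mathsf{KR}^{\prior}\times \sspace$. The sample-space inclusion $\mathsf{KR}^{\prodm}\subseteq \pspace\times \mathsf{KR}^{\prodm_{\prss}}$ is symmetric, using $V\times \pspace$ in place of $U\times \sspace$ and the pushforward identity for $\prodm_{\prss}$.

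For the Schnorr inclusions, suppose $(\psvar,\ssvar)\in \mathsf{SR}^{\prodm}$ but $\psvar\notin \mathsf{SR}^{\prior}$. Fix a sequential Schnorr $\prior$-test $U_n$ with $\prior(U_n)\leq 2^{-n}$ uniformly computable and $\psvar\in \bigcap_n U_n$. Then $W_n:=U_n\times \sspace$ is a computable sequence of c.e. opens in $\prodspace$, and $\prodm(W_n)=\prior(U_n)$ is uniformly computable (and bounded by $2^{-n}$), so the $W_n$ form a sequential Schnorr $\prodm$-test. Since $(\psvar,\ssvar)\in \bigcap_n W_n$, this contradicts $(\psvar,\ssvar)\in \mathsf{SR}^{\prodm}$. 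The argument for the sample-space projection is again symmetric.

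There is really no main obstacle here: the proposition is the standard fact that computable continuous maps transport randomness to randomness for the pushforward, specialized to the two coordinate projections. The only small point worth double-checking is that the lifted $\prodm$-test inherits uniform computability of its measures from the marginal test, which is automatic because $\prodm(U\times \sspace)=\prior(U)$ pointwise in the index.
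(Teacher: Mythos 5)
Your proof is correct and matches the paper's (implicit) argument: the paper states this proposition without proof, justifying it only by the remark that the projections $\prps$ and $\prss$ are computable continuous, which is exactly the test-lifting argument you spell out. The one bookkeeping point you flag\textemdash that $\prodm(U\times\sspace)=\prior(U)$ and $\prodm(\pspace\times V)=\prodm_{\prss}(V)$, so the lifted tests have uniformly computable measure\textemdash is the right thing to check and is handled correctly.
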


We now discuss when the likelihoods are probability measures. This is a pointwise result and so we use the conventions on versions (cf. Definition~\ref{defn:conventionversions}). It can be seen as an  effective variant of classical results about regularity of conditional probability measures, which traditionally go through being a Borel subset of a Polish space (cf. \cite[Theorem B.32]{Schervish2012-cn}).

\begin{prop}\label{prop:likelihoodareprob} (The likelihoods and probability measures).

For all $\psvar_0$ in $\mathsf{SR}^{\prior}$ one has that $\pmap{\psvar_0}$ is in $\mathcal{P}(\sspace)$.
\end{prop}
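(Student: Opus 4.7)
The plan is to verify, at each $\psvar_0 \in \mathsf{SR}^{\prior}$, that the numbers $\pmape{\sigma}{\psvar_0} = g_\sigma(\psvar_0) - h_\sigma(\psvar_0)$ (using the convention versions of Definition~\ref{defn:conventionversions}) satisfy the three axioms of a probability measure on the path space $\sspace = [T]$: (a) $\pmape{\emptyset}{\psvar_0} = 1$; (b) $\pmape{\sigma}{\psvar_0} \geq 0$ for all $\sigma \in T$; and (c) the Kolmogorov consistency $\pmape{\sigma}{\psvar_0} = \sum_{\sigma' \in C(\sigma)} \pmape{\sigma'}{\psvar_0}$, where $C(\sigma)$ denotes the immediate children of $\sigma$ in $T$. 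By Definition~\ref{defn:assum}(\ref{assum2.5}) these all hold for $\prior$-a.s. $\psvar$, and each $\pmape{\sigma}{\cdot}$ is by construction a difference of $L_1(\prior)$ Schnorr tests, so Proposition~\ref{prop:miyabeish}(\ref{prop:miyabeish:2}) will be the workhorse for upgrading $\prior$-a.s. inequalities between such differences to pointwise inequalities on $\mathsf{SR}^{\prior}$.

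For (a), observe that $(g_\emptyset - h_\emptyset) - 1 = g_\emptyset - (h_\emptyset + 1)$ and $1 - (g_\emptyset - h_\emptyset) = (h_\emptyset + 1) - g_\emptyset$ are both $\prior$-a.s. nonnegative differences of $L_1(\prior)$ Schnorr tests, so applying Proposition~\ref{prop:miyabeish}(\ref{prop:miyabeish:2}) in both directions yields the equality at $\psvar_0$. Condition (b) is immediate from the same proposition applied to $g_\sigma - h_\sigma \geq 0$ $\prior$-a.s. The easy half of (c) is analogous: for any finite $F \subseteq C(\sigma)$, the difference $\pmape{\sigma}{\cdot} - \sum_{\sigma' \in F} \pmape{\sigma'}{\cdot}$ is a $\prior$-a.s. nonnegative difference of $L_1(\prior)$ Schnorr tests, hence nonnegative at $\psvar_0$, and passing to the supremum over $F$ gives $\pmape{\sigma}{\psvar_0} \geq \sum_{\sigma' \in C(\sigma)} \pmape{\sigma'}{\psvar_0}$.

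The reverse inequality in (c) is the main obstacle, since $C(\sigma) = \{\sigma_k\}_{k \geq 0}$ may be infinite (as $T$ can branch infinitely). Set $R_N(\psvar) = \pmape{\sigma}{\psvar} - \sum_{k<N} \pmape{\sigma_k}{\psvar}$; on $\mathsf{SR}^{\prior}$ this is nonnegative (by the easy direction) and monotone nonincreasing in $N$ (by (b)), and uniformly in $N$ it is a difference of $L_1(\prior)$ Schnorr tests. By Proposition~\ref{prop:prodmcomputable},
\[
\|R_N\|_{L_1(\prior)} = \prodm(\pspace \times [\sigma]) - \sum_{k<N} \prodm(\pspace \times [\sigma_k])
\]
is a computable real tending to $0$. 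For each $\ell \geq 1$, the uniform version of Proposition~\ref{prop:miyabeish}(\ref{prop:miyabeish:1}) applied with $(a_\ell, b_\ell) = (2^{-\ell}, 2^{-\ell+1})$ supplies a computable $s_\ell \in (2^{-\ell}, 2^{-\ell+1})$ such that $\{R_N > s_\ell\}$ agrees on $\mathsf{SR}^{\prior}$ with computable $\Sigma^{0,\prior}_2$ classes, uniformly in $N$. Choose a subsequence $N^{(\ell)}_k$ with $\|R_{N^{(\ell)}_k}\|_{L_1(\prior)} \leq 2^{-k-\ell}$; then by Markov's inequality these classes have $\prior$-measure at most $2^{-k}$, so Proposition~\ref{prop:schnorrsigmazerotwonutest} forces $R_{N^{(\ell)}_k}(\psvar_0) \leq s_\ell \leq 2^{-\ell+1}$ for some $k$. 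Monotonicity then gives $\limsup_N R_N(\psvar_0) \leq 2^{-\ell+1}$, and letting $\ell \to \infty$ yields $R_N(\psvar_0) \to 0$, establishing the reverse inequality in (c).

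Having verified (a)-(c) at $\psvar_0 \in \mathsf{SR}^{\prior}$, the values $\pmape{\sigma}{\psvar_0}$ extend uniquely (by Carath\'eodory) to a Borel probability measure on $\sspace = [T]$, which coincides with $\pmap{\psvar_0}$ since finite Borel measures on $\sspace$ are determined by their action on the basic clopens $[\sigma]$.
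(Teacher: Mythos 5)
Your proposal is correct and follows essentially the same route as the paper's proof: both reduce the claim to verifying, pointwise at each $\psvar_0$ in $\mathsf{SR}^{\prior}$, that the version $\gmape{\sigma}{\cdot}-\hmape{\sigma}{\cdot}$ assigns mass $1$ to the root, is nonnegative, and splits countably additively over immediate successors, using Proposition~\ref{prop:miyabeish}(\ref{prop:miyabeish:2}) for the a.s.-to-pointwise inequalities and Proposition~\ref{prop:schnorrsigmazerotwonutest} applied to a computable sequence of $\Sigma^{0,\prior}_2$ classes of effectively shrinking measure for the hard direction of the splitting identity. The only (harmless) divergence is in how those measure bounds are obtained: you apply Markov's inequality to the computable $L_1(\prior)$-norms of the tail remainders $R_N$ (which, for the computability claim, is really Proposition~\ref{prop:thebasisonprod} rather than Proposition~\ref{prop:prodmcomputable}), whereas the paper extracts a fast subsequence from the computable, decreasing measures of its nested classes $B_{n,j}$.
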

\begin{proof}
We suppose that there is a uniformly computable sequence of $L_1(\prior)$ Schnorr tests $\gmape{\sigma}{\cdot}, \hmape{\sigma}{\cdot}$ such that $\pmape{\sigma}{\cdot}=\gmape{\sigma}{\cdot}-\hmape{\sigma}{\cdot}$ pointwise. By Proposition~\ref{prop:miyabeish}(\ref{prop:miyabeish:2}) we have $\gmape{\sigma}{\cdot}\geq \hmape{\sigma}{\cdot}$ on $\mathsf{SR}^{\prior}$.

Since the proofs are largely the same, we just do one part. We hence omit the proof that if $\emptyset$ is the root of the tree $T$, then $\pmape{\emptyset}{\psvar}=1$ for $\psvar$ in $\mathsf{SR}^{\prior}$.

\begin{proofdetail}
In more detail:

Where $\emptyset$ is the root of the tree $T$, we show that $\pmape{\emptyset}{\psvar}=1$ for $\psvar$ in $\mathsf{SR}^{\prior}$:
\begin{itemize}[leftmargin=*]
    \item Suppose $0<s<1$ is rational. By Proposition~\ref{prop:miyabeish}, choose a computable $r$ in $(s,1)$, and a $\Sigma^{0,\prior}_2$ set $B$ which is equal to $\{\psvar: \pmape{\emptyset}{\psvar}>r\}$ on $\mathsf{SR}^{\prior}$. Then $\pmape{\emptyset}{\psvar}> r>s$ for $\psvar$ in $\mathsf{SR}^{\prior}$ by Proposition~\ref{prop:schnorrsigma0nu2}. Since this holds for all rational $s$ in $(0,1)$ we have $\pmape{\emptyset}{\psvar}\geq 1$ for $\psvar$ in $\mathsf{SR}^{\prior}$.
\item Suppose $s>1$ is rational. By Proposition~\ref{prop:miyabeish}, choose a computable $r$ in $(1,s)$ and a  $\Sigma^{0,\prior}_2$ set $B^{\prime}$ which is equal to $\{\psvar: \pmape{\emptyset}{\psvar}>r\}$ on $\mathsf{SR}^{\prior}$. But then $B^{\prime}$ is $\prior$-null, and no element of $\mathsf{KR}^{\prior}$ can be in a $\Sigma^{0,\prior}_2$ $\prior$-null set. Since this holds for all rational $s>1$, we have $\pmape{\emptyset}{\psvar}\leq 1$ for $\psvar$ in $\mathsf{SR}^{\prior}$.
\end{itemize}
\end{proofdetail}

Suppose that $\sigma$ in $T$ is fixed; enumerate its immediate successors in $T$ as $\tau_0, \tau_1, \ldots$. We may suppose by induction that we know $0\leq \pmape{\sigma}{\psvar}\leq 1$ on $\psvar$ in $\mathsf{SR}^{\prior}$. We want to show that $\pmape{\sigma}{\psvar}=\sum_i \pmape{\tau_i}{\psvar}$ for all $\psvar$ in $\mathsf{SR}^{\prior}$:
\begin{itemize}[leftmargin=*]
    \item We show that for all $\psvar$ in $\mathsf{SR}^{\prior}$, we have $\pmape{\sigma}{\psvar}\leq \sum_i \pmape{\tau_i}{\psvar}$. By Proposition~\ref{prop:miyabeish}, choose dense computable $r_n$ sequence in $(0,1)$ and a computable sequence of $\Sigma^{0,\prior}_2$ sets $B_{n,j}$ which are equal to $\{\psvar: \pmape{\sigma}{\psvar}> r_n> \sum_{i\leq j} \pmape{\tau_i}{\psvar}\}$ on $\mathsf{SR}^{\prior}$. Fix $n\geq 0$ and set $B_n:=\bigcap_j B_{n,j}$. Since $B_n$ is $\prior$-null, we can compute a subsequence $B_{n,j_m}$ with $\nu(B_{n,j_m})\leq 2^{-m}$ for all $m\geq 0$. By Proposition~\ref{prop:schnorrsigmazerotwonutest}, no element of $\mathsf{SR}^{\prior}$ is in $B_n$. Since this holds for all $n\geq 0$ and since $r_n$ is dense in $(0,1)$, we are done.
    \item The other inequality is easier and so we omit it.

\begin{proofdetail}
In more detail:

\item Second we show that for all $\psvar$ in $\mathsf{SR}^{\prior}$, we have $\pmape{\sigma}{\psvar}\geq \sum_i \pmape{\tau_i}{\psvar}$. Fix $j>0$. By Proposition~\ref{prop:miyabeish}, choose dense computable $r_n$ sequence in $(0,1)$ and a computable sequence of $\Sigma^{0,\prior}_2$ sets $B_n$ which are equal to $\{\psvar: \pmape{\sigma}{\psvar}< r_n< \sum_{i\leq j} \pmape{\tau_i}{\psvar}\}$ on $\mathsf{SR}^{\prior}$. But $B:=\bigcup_n B_n$ is $\prior$-null, and hence by Proposition~\ref{prop:sigma02nu}(\ref{prop:sigma02nu:1.5}), $B$ is a $\Sigma^{0,\prior}_2$ $\prior$-null class.  No element of $\mathsf{KR}^{\prior}$ can be in a $\Sigma^{0,\prior}_2$ $\prior$-null set. Since this holds for all $j>0$, we have that $\pmape{\sigma}{\psvar}\geq \sum_i \pmape{\tau_i}{\psvar}$ for all $\psvar$ in $\mathsf{SR}^{\prior}$.
\end{proofdetail}

 \end{itemize}
\end{proof}

There is a similar but much easier result for posteriors, whose proof we omit:
\begin{prop}\label{prop:whenpostprob2} (The posterior and probability measures).

One has that $\post{\ssvarn{n}}$ is in $\mathcal{P}(\pspace)$ for all $\ssvar$ in $\mathsf{KR}^{\prodm_{\prss}}$ and all $n\geq 0$.
\end{prop}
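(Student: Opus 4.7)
The plan is to unpack the definition of the posterior in (\ref{eqn:post}) and show that, for $\ssvar$ in $\mathsf{KR}^{\prodm_{\prss}}$, the denominator is positive; then the verification that the quotient defines a probability measure on $\pspace$ is routine. By formula (\ref{eqn:formula:int:mu}) applied to the indicator of $\pspace \times [\sigma]$, the denominator $\int_{\pspace} \pmape{\sigma}{\psvar}\, d\prior(\psvar)$ equals $\prodm(\pspace \times [\sigma]) = \prodm_{\prss}([\sigma])$. So the goal reduces to showing that $\prodm_{\prss}([\ssvarn{n}]) > 0$ for every Kurtz random sample $\ssvar$ and every $n \geq 0$.

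The main step is to produce, for each $n \geq 0$, a c.e. open set in $\sspace$ of full $\prodm_{\prss}$-measure that witnesses positivity at level $n$. Specifically, I would define
\begin{equation*}
U_n \;=\; \bigcup \{[\sigma] : \sigma \in T,\; |\sigma|=n,\; \prodm_{\prss}([\sigma]) > 0\}.
\end{equation*}
Since $\prodm_{\prss}$ is a computable probability measure and the basic clopens $[\sigma]$ form a $\prodm_{\prss}$-computable basis (by Proposition~\ref{prop:thebasisonprod} and the ensuing remark), the map $\sigma \mapsto \prodm_{\prss}([\sigma])$ is computable, so in particular the index set $\{\sigma \in T : |\sigma|=n,\; \prodm_{\prss}([\sigma])>0\}$ is c.e.; hence $U_n$ is c.e. open uniformly in $n$. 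Because the length-$n$ strings of $T$ partition $\sspace=[T]$, we have $\sum_{|\sigma|=n,\sigma\in T} \prodm_{\prss}([\sigma]) = 1$, and only strings with positive mass contribute, so $\prodm_{\prss}(U_n)=1$.

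Now if $\ssvar \in \mathsf{KR}^{\prodm_{\prss}}$, then by definition $\ssvar$ lies in every c.e. open of $\prodm_{\prss}$-measure one, and in particular $\ssvar \in U_n$. Hence there is some $\sigma$ of length $n$ with $\ssvarn{n} = \sigma$ and $\prodm_{\prss}([\ssvarn{n}]) > 0$, so the denominator in (\ref{eqn:post}) is strictly positive and the posterior is given by the genuine ratio rather than the zero-measure convention. Finally, the quotient $A \mapsto \poste{A}{\ssvarn{n}}$ is a probability measure on $\pspace$: it is nonnegative since $\pmape{\ssvarn{n}}{\cdot} \geq 0$ holds $\prior$-a.s. (by Proposition~\ref{prop:miyabeish}(\ref{prop:miyabeish:2}), applied to the version $g-h$ of $\pmape{\ssvarn{n}}{\cdot}$, we have $g \geq h$ on $\mathsf{SR}^{\prior}$, which is a $\prior$-full-measure set); it assigns total mass $1$ by taking $A = \pspace$; and countable additivity is inherited from the dominated/monotone convergence of the integral in the numerator against the $L_1(\prior)$ function $\pmape{\ssvarn{n}}{\cdot}$. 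The only real content is the positivity of the denominator, which is exactly the c.e. open witness argument above; there is no obstacle beyond noticing that the basic clopens form a $\prodm_{\prss}$-computable basis.
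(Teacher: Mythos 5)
Your proof is correct and takes essentially the same approach as the paper's: both reduce the claim to the positivity of $\prodm_{\prss}([\ssvarn{n}])$ and then invoke the definition of Kurtz randomness, the paper by noting that $\ssvar$ would otherwise lie in a $\prodm_{\prss}$-null effectively closed clopen $[\sigma]$, you by the complementary observation that $\ssvar$ must lie in the measure-one c.e. open union of the positive-mass level-$n$ clopens. The two phrasings are interchangeable, and your additional remarks on why the normalized quotient is a probability measure are fine.
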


\begin{proofdetail}
\begin{proof}
 Let $I_n=\{\sigma\in T: \left|\sigma\right|=n \; \& \; \int_{\pspace} \pmape{\sigma}{\psvar} \; d\prior(\psvar)=0\}$. Note that $I_n=\{\sigma\in T: \left|\sigma\right|=n \; \& \;\prodm(\pspace\times [\sigma])=0\}$. Suppose that $\psvar$ in $\mathsf{KR}^{\prodm_{\prss}}$. If $\post{\ssvarn{n}}$ is not in $\mathcal{P}(\pspace)$, then $\ssvar$ in $[\sigma]$ for some $\sigma$ in $I_n$ and so $\prodm_{\prss}([\sigma])=0$. But elements of $\mathsf{KR}^{\prodm_{\prss}}$ cannot be in $\prodm_{\prss}$-null effectively closed sets. 
 \end{proof}
\end{proofdetail}

We note that the posterior is an integral test:
\begin{prop}\label{prop:posteriorintegral} (The posterior and integral tests).

If $U\subseteq \pspace$ is c.e. open, then $x\mapsto \poste{U}{\ssvarn{n}}$ is lsc and $\leq 1$, uniformly in $n\geq 0$.
\end{prop}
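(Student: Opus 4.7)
The plan is to observe that, since $\poste{U}{\ssvarn{n}}$ depends on $\ssvar$ only through the length-$n$ prefix $\sigma = \ssvarn{n}$, we have, for rational $q\geq 0$,
\begin{equation*}
\{\ssvar \in \sspace : \poste{U}{\ssvarn{n}} > q\} \;=\; \bigcup \{[\sigma] : \sigma \in T,\, |\sigma| = n,\, \poste{U}{\sigma} > q\}.
\end{equation*}
Hence uniform lower semi-computability reduces to c.e.\ enumerability (uniformly in $n\geq 0$ and $q\in \mathbb{Q}^{\geq 0}$) of the set of $\sigma \in T$ of length $n$ with $\poste{U}{\sigma}>q$.

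I would then introduce the abbreviations $N(\sigma) := \int_U \pmape{\sigma}{\psvar}\,d\prior(\psvar) = \prodm(U\times[\sigma])$ and $D(\sigma) := \int_{\pspace} \pmape{\sigma}{\psvar}\,d\prior(\psvar) = \prodm(\pspace\times[\sigma])$. By Proposition~\ref{prop:prodmcomputable}, $N(\sigma)$ is left-c.e.\ uniformly in $\sigma\in T$, since $U\times [\sigma]$ is c.e.\ open in $\prodspace$; and by Proposition~\ref{prop:thebasisonprod}, applied to the c.e.\ open $\pspace$ itself (which trivially satisfies $\prior(\pspace)=1$ computable), $D(\sigma)$ is computable uniformly in~$\sigma$. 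The upper bound is then immediate: since $\pmape{\sigma}{\cdot}\geq 0$ $\prior$-a.s.\ by Proposition~\ref{prop:likelihoodareprob} and $U\subseteq \pspace$, we have $0\leq N(\sigma)\leq D(\sigma)$, so the posterior is either $N(\sigma)/D(\sigma)\leq 1$ (when $D(\sigma)>0$) or is zero by convention.

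To close the argument, I would verify the equivalence $\poste{U}{\sigma} > q \Longleftrightarrow N(\sigma) > q\, D(\sigma)$ for all rational $q\geq 0$: if $D(\sigma) > 0$, this is just clearing denominators; if $D(\sigma) = 0$, then $N(\sigma) = 0$ too, the posterior is zero by convention, and $0 > q\cdot 0$ fails. Since $N(\sigma) - q\, D(\sigma)$ is uniformly in $(\sigma,q)$ a difference of a left-c.e.\ real and a computable one, the condition $N(\sigma) - q\, D(\sigma) > 0$ is uniformly c.e., and combined with the computability of~$T$ this yields the required uniform enumeration. I do not foresee a substantive obstacle; the only care needed is with the boundary cases $q=0$ and $D(\sigma)=0$, both of which are handled cleanly by the inequality $0\leq N\leq D$.
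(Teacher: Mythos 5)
Your argument is correct and follows essentially the same route as the paper's proof: both reduce the superlevel condition $\poste{U}{\ssvarn{n}}>q$ to a c.e.\ condition on the length-$n$ prefix $\sigma$, using that $\prodm(U\times[\sigma])$ is uniformly left-c.e.\ (Proposition~\ref{prop:prodmcomputable}) while $\prodm(\pspace\times[\sigma])$ is uniformly computable because $\pspace\times[\sigma]$ is both c.e.\ open and effectively closed (the paper phrases the comparison via an existentially quantified rational $r$ between $N$ and $qD$, which is the same as your observation that $N-qD>0$ is a c.e.\ condition). Your explicit handling of $q=0$ and $D(\sigma)=0$ via $0\leq N\leq D$ is a small tidying of boundary cases the paper leaves implicit, not a different method.
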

\begin{proof}
For rational $q>0$, one has $\poste{U}{\ssvarn{n}}>q$ iff there is $\sigma$ in $T$ of length $n$ such that $\ssvar$ is in $[\sigma]$ and there is rational $r>0$ with $\prodm(U\times [\sigma])>r$ and $0<\prodm(\pspace\times [\sigma])\leq \frac{r}{q}$. This is a c.e. open condition in variable $\ssvar$, since $U\times [\sigma]$ is c.e. open in $\prodspace$ and since $\pspace\times [\sigma]$ is both c.e. open and effectively closed in $\prodspace$.  
\end{proof}

Recall the notation from e.g. the Fubini-Tonelli Theorems: if $B\subseteq \prodspace$ and $\psvar$ in $\pspace$, then the $\psvar$-section $B^{\psvar}$ of $B$ is defined as $B^{\psvar}:=\{\ssvar\in \sspace : (\psvar, \ssvar)\in B\}$. We have:
\begin{prop}\label{prop:improve}
(Parameter space sections of the likelihood, evaluated at c.e. opens with computable measure, are $L_1(\prior)$ computable).
\begin{enumerate}[leftmargin=*]
\item \label{prop:improve:1} The map $\psvar\mapsto \pmape{U^{\psvar}}{\psvar}$ is $L_1(\prior)$ computable, for c.e. open $U\subseteq \pspace$ with $\prior(U)$ computable.
\item \label{prop:improve:2} Any version of it as a difference of $L_1(\prior)$ Schnorr tests agrees on points $\psvar$ in $\mathsf{SR}^{\prior}$ with the value which the element $\pmap{\psvar}$ of $\mathcal{P}(\sspace)$ assigns to the event $U^{\psvar}$.
\end{enumerate}
\end{prop}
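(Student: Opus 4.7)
I note that the statement as written has $U\subseteq \pspace$, but the section notation $U^{\psvar}$ requires $U\subseteq\prodspace$, so I interpret the hypothesis as $U\subseteq\prodspace$ c.e.\ open with $\prodm(U)$ computable. My plan is to approximate $F(\psvar) := \pmape{U^\psvar}{\psvar}$ by simpler functions built from finite unions of basic rectangles. By Proposition~\ref{prop:thebasisonprod}, write $U = \bigcup_n V_n\times[\sigma_n]$ with each $V_n$ from a $\prior$-computable basis (with effectively closed cousin $C_n\supseteq V_n$ of equal $\prior$-measure), and set $U_k := \bigcup_{n<k} V_n\times[\sigma_n]$; after thinning, $\prodm(U)-\prodm(U_k)\le 2^{-k}$. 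Applying the disjointification from the proof of Proposition~\ref{prop:prodmcomputable}, each $U_k$ becomes a finite disjoint union (on $\mathsf{KR}^\prior\times\sspace$) of rectangles $W_{k,j}\times[\tau_{k,j}]$, where each $W_{k,j}\subseteq\pspace$ lies in the Boolean algebra generated by the $V_n$'s and $C_n^c$'s, hence is c.e.\ open with computable $\prior$-measure.

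For Part~(1), I would set $f_k(\psvar) := \sum_j I_{W_{k,j}}(\psvar)\cdot\pmape{\tau_{k,j}}{\psvar}$. Each $I_{W_{k,j}}$ is $L_1(\prior)$ computable by Proposition~\ref{prop:indicator}(\ref{prop:indicator:1}) and is pointwise bounded by $1$; each $\pmape{\tau_{k,j}}{\cdot}$ is $L_1(\prior)$ computable by Definition~\ref{defn:assum}(\ref{assum2.5}), and is $\prior$-a.s.\ in $[0,1]$ by Proposition~\ref{prop:likelihoodareprob}, so truncating any dense-set approximation at~$1$ (which preserves being in the dense set and $L_1$-convergence) yields an $L_1(\prior)$-computable version bounded by~$1$. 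By Proposition~\ref{prop:productsL1nu} each summand is $L_1(\prior)$ computable, so $f_k$ is too. Fubini on each rectangle gives $\int_{W_{k,j}}\pmape{\tau_{k,j}}{\cdot}\,d\prior = \prodm(W_{k,j}\times[\tau_{k,j}])$; summing and using $\prior$-a.s.\ disjointness yields $\|f_k\|_{L_1(\prior)} = \prodm(U_k)$. Since $f_k\le F$ $\prior$-a.s.\ and $\int F\,d\prior = \prodm(U)$ (again by Fubini on the measurable indicator $I_U$), this gives $\|F-f_k\|_{L_1(\prior)} = \prodm(U)-\prodm(U_k)\le 2^{-k}$. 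Hence $F$ is $L_1(\prior)$ computable as a fast $L_1$-limit of $L_1(\prior)$-computable functions.

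For Part~(2), fix $\psvar\in\mathsf{SR}^\prior$. On SR the sets $W_{k,j}$ (for fixed $k$) are pairwise disjoint, so exactly one $W_{k,j^*}$ (or none) contains $\psvar$. Using the version convention together with Proposition~\ref{prop:likelihoodareprob}---which identifies the version value $\pmape{\sigma}{\psvar}$ on SR with $\pmap{\psvar}([\sigma])$, where $\pmap{\psvar}\in\mathcal{P}(\sspace)$---finite additivity of $\pmap{\psvar}$ gives $f_k(\psvar) = \pmap{\psvar}(U_k^\psvar)$ at every SR $\psvar$. Since $U_k\uparrow U$, continuity of $\pmap{\psvar}$ from below gives $f_k(\psvar)\uparrow \pmap{\psvar}(U^\psvar)$ pointwise on $\mathsf{SR}^\prior$. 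To conclude that any Schnorr-test-difference version of $F$ equals this limit on SR, I would invoke the combination of the computable rate $\|F-f_k\|_{L_1(\prior)}\le 2^{-k}$ with a Markov-type argument on the $\Sigma^{0,\prior}_2$ classes $\{\psvar:|F(\psvar)-f_k(\psvar)|>\epsilon\}$ in the vein of Propositions~\ref{prop:miyabeish} and~\ref{prop:schnorrsigmazerotwonutest}, yielding $F(\psvar)=\lim_k f_k(\psvar)=\pmap{\psvar}(U^\psvar)$ at every $\psvar\in\mathsf{SR}^\prior$.

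The main obstacle is making this last step fully rigorous: identifying the pointwise value of the chosen version of $F$ with $\lim_k f_k$ on SR, given that $f_k$ is $L_1$-computable but not drawn from the countable dense set of $L_1(\prior)$. The standard workaround is to approximate each $f_k$ to within $2^{-(k+2)}$ in $L_1(\prior)$ by dense-set elements $g_k$, obtaining $g_k\to F$ at a computable rate. Miyabe's result cited after Definition~\ref{defn:conventionversions} then guarantees that every Schnorr-test-difference version of $F$ agrees with $\lim_k g_k$ on SR; a further application of Proposition~\ref{prop:miyabeish} to the $L_1$-computable differences $f_k-g_k$ shows $\lim_k g_k(\psvar)=\lim_k f_k(\psvar)$ at every $\psvar\in\mathsf{SR}^\prior$, closing the argument.
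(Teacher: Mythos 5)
Your proposal is correct and takes essentially the same route as the paper's own proof: reduce to finite unions of basic rectangles, express the parameter-space section as a finite sum of terms $I_{W}(\psvar)\cdot\pmape{\tau}{\psvar}$ (each $L_1(\prior)$ computable by Propositions~\ref{prop:indicator} and~\ref{prop:productsL1nu}), get the error bound from the Fubini identity $\|F-f_k\|_{L_1(\prior)}=\prodm(U\setminus U_k)$, and close part~(\ref{prop:improve:2}) with the $\Sigma^{0,\prior}_2$/Miyabe machinery on $\mathsf{SR}^{\prior}$ --- and you correctly read the hypothesis as $U\subseteq\prodspace$ c.e.\ open with $\prodm(U)$ computable, which is what the paper's proof and its application in Proposition~\ref{prop:important} require. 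The only slip is the claim that the $W_{k,j}$ are pairwise disjoint for fixed $k$ (the same first factor can recur, paired with disjoint cylinders, as in $V\times[\sigma_1]\cup V\times[\sigma_2]$); what the disjointification actually gives is that, for fixed $\psvar\in\mathsf{KR}^{\prior}$, the cylinders $[\tau_{k,j}]$ with $\psvar\in W_{k,j}$ are pairwise disjoint, which is exactly what your finite-additivity step needs, so the conclusion $f_k(\psvar)=\pmap{\psvar}(U_k^{\psvar})$ stands.
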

\begin{proof}
First suppose that $V$ is a finite union $\bigcup_{i\leq n} (W_i\times [\sigma_i])$ of elements from the $\prodm$-computable basis, and suppose that $C_i\supseteq W_i$ is effectively closed with the same $\prior$-measure. For each $J\subseteq [0,n]$ let $J^{\prime}=\{i\in J: \forall \; j\in J\; \sigma_j\nprec \sigma_i\}$ and 
define the following events where $i$ in the subscript ranges over $\leq n$:
\begin{equation*}
A_J=\bigcap_{i\in J} W_i\cap \bigcap_{i\notin J} \pspace\setminus W_i, \hspace{5mm} B_J=\bigcap_{i\in J} W_i\cap \bigcap_{i\notin J} \pspace\setminus C_i
\end{equation*}
Then $\pmape{V^{\psvar}}{\psvar}=\sum_{J\subseteq [0,n]} \sum_{i\in J^{\prime}} I_{A_J}(\psvar) \cdot \pmape{\sigma_i}{\psvar}$. By Proposition \ref{prop:indicator} and Proposition~\ref{prop:productsL1nu} this is a finite sum of $L_1(\prior)$ computable functions, since $A_J$ comes from the algebra generated by the $\prior$-computable basis and since $\psvar\mapsto \pmape{\sigma}{\psvar}$ is $L_1(\prior)$ computable uniformly in $\sigma$ from $T$. This takes care of (\ref{prop:improve:1}). For (\ref{prop:improve:2}), it suffices to find one such version, since all such versions agree on the Schnorr randoms. Suppose that $\pmape{\sigma}{\cdot}$ uses the version which is the difference of $\gmape{\sigma}{\cdot}$ and $ \hmape{\sigma}{\cdot}$, where these are $L_1(\prior)$ Schnorr tests, uniformly in $\sigma$ from $T$. Then to finish (\ref{prop:improve:2}), we note that $\pmape{V^{\psvar}}{\psvar}$ has a version $\fmape{V}{\psvar}$ which is the difference of 
\begin{equation}\label{eqn:gmape}
\gmape{V}{\psvar}:=\sum_{J\subseteq [0,n]} \sum_{i\in J^{\prime}} I_{B_J}(\psvar) \cdot \gmape{\sigma_i}{\psvar}, \hspace{5mm} \hmape{V}{\psvar}:=\sum_{J\subseteq [0,n]} \sum_{i\in J^{\prime}} I_{B_J}(\psvar) \cdot \hmape{\sigma_i}{\psvar}
\end{equation}

Second, we treat the general case. Let $U=\bigcup_m U_m$, where $U_m$ comes from the $\prodm$-computable basis. Let $V_{\ell}=\bigcup_{m\leq \ell} U_{m}$. For each $n\geq 0$, compute $\ell_n\geq 0$ such that $\prodm(U\setminus V_{\ell_n})<2^{-n}$. One has that $\| \pmape{U^{\cdot}}{\cdot} - \pmape{V_{\ell_n}^{\cdot}}{\cdot} \|_{L_1(\prior)}$ is equal to
\begin{equation*}
 \int_{\pspace} \int_{\sspace} I_U(\psvar,\ssvar)-I_{V_{\ell_n}}(\psvar,\ssvar) \; d\pmap{\psvar}(\ssvar) \; d\prior(\psvar) = \prodm(U\setminus V_{\ell_n})< 2^{-n} 
\end{equation*}
By (\ref{prop:improve:1}) applied to $V_{\ell_n}$, we have that $\psvar\mapsto \pmape{V_{\ell_n}}{\psvar}$ is $L_1(\prior)$ computable; and we just showed that these converge fast to $\psvar\mapsto \pmape{U}{\psvar}$ in $L_1(\prior)$; and so we are done with (\ref{prop:improve:1}). For (\ref{prop:improve:2}), let $\psvar\mapsto \pmape{U^{\psvar}}{\psvar}$ have a version $f$ which is a difference of $L_1(\prior)$ Schnorr tests $g,h$. By (\ref{prop:improve:2}) applied to $V_i$, we have that $\fmape{V_i}{\psvar}$ is increasing and its limit exists in $[0,1]$, where $\fmape{V_i}{\psvar}$ is the difference of $\gmape{V_i}{\psvar}, \hmape{V_i}{\psvar}$ from (\ref{eqn:gmape}).  It suffices to show the identity $f(\psvar)=\lim_i \fmape{V_i}{\psvar}$ for $\psvar$ in $\mathsf{SR}^{\prior}$. The proof of this identity, via the two inqualities, follows as in the proof of Proposition~\ref{prop:likelihoodareprob}.

\begin{proofdetail}
In more detail:
\begin{itemize}[leftmargin=*]
    \item We show that for all $\psvar$ in $\mathsf{SR}^{\prior}$, we have $\fmape{V}{\psvar}\leq \lim_i \fmape{V_n}{\psvar}$. By Proposition~\ref{prop:miyabeish}, choose dense computable $r_n$ sequence in $(0,1)$ and a computable sequence of $\Sigma^{0,\prior}_2$ sets $B_{n,j}$ which are equal to $\{\psvar: \fmape{V}{\psvar}> r_n>  \fmape{V_j}{\psvar}\}$ on $\mathsf{SR}^{\prior}$. Fix $n\geq 0$ and set $B_n:=\bigcap_j B_{n,j}$. Since $B_n$ is $\prior$-null, we can compute a subsequence $B_{n,j_m}$ with $\nu(B_{n,j_m})\leq 2^{-m}$ for all $m\geq 0$. By Proposition~\ref{prop:schnorrsigmazerotwonutest}, we have that no element of $\mathsf{SR}^{\prior}$ is in $B_n$. Since this holds for all $n\geq 0$ and since $r_n$ is dense in $(0,1)$, we are done.
    \item Second we show that for all $\psvar$ in $\mathsf{SR}^{\prior}$, we have $\fmape{V}{\psvar}\geq \lim_i \fmape{V_i}{\psvar}$. Fix $j>0$. By Proposition~\ref{prop:miyabeish}, choose dense computable $r_n$ sequence in $(0,1)$ and a computable sequence of $\Sigma^{0,\prior}_2$ sets $B_n$ which are equal to $\{\psvar: \fmape{V}{\psvar}< r_n<  \fmape{V_j}{\psvar}\}$ on $\mathsf{SR}^{\prior}$. But $B:=\bigcup_n B_n$ is $\prior$-null, and hence by Proposition~\ref{prop:sigma02nu}(\ref{prop:sigma02nu:1.5}), $B$ is a $\Sigma^{0,\prior}_2$ $\prior$-null class.  No element of $\mathsf{KR}^{\prior}$ can be in a $\Sigma^{0,\prior}_2$ $\prior$-null set. Since this holds for all $j>0$, we have that $\fmape{V}{\psvar}\geq \lim_i \fmape{V_i}{\psvar}$ for all $\psvar$ in $\mathsf{SR}^{\prior}$.
\end{itemize}
\end{proofdetail}

\end{proof}

We can use this to prove the following proposition:\footnote{See \cite[Theorem 3.1-3.2]{Takahashi2023-rl} for the analogue in his setting, and for precedents in Vovk and V'Yugin. As Takahashi notes, these results are reminiscent of van Lambalgen's Theorem. So as to avoid further notation, we have not stated them in terms of oracle computation. Rute \cite[Theorem 8.2]{Rute2018-nd} proves a similar result for Schnorr randomness when, in our terminology and setting, the likelihood map is computable continuous from $\pspace$ to $\mathcal{M}^+(\sspace)$.}
\begin{prop}\label{prop:important} (Relation between randomness on prior and joint distribution).

If $\psvar$ in $\mathsf{SR}^{\prior}$, then $(\psvar,\ssvar)$ is in $\mathsf{SR}^{\prodm}$ for $\pmap{\psvar}$-a.s. many $\ssvar$ in $\sspace$. 

\end{prop}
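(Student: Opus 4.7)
The plan is, given $\psvar \in \mathsf{SR}^{\prior}$, to show that for each sequential Schnorr $\prodm$-test $\{U_n\}$ (with $\prodm(U_n) \leq 2^{-n}$ uniformly computable) the null set $V := \bigcap_n U_n$ satisfies $\pmap{\psvar}(V^{\psvar}) = 0$, where $V^{\psvar} := \{\ssvar : (\psvar,\ssvar) \in V\}$. Since the complement of $\mathsf{SR}^{\prodm}$ in $\prodspace$ is the union of such $V$'s over the countably many computable indices for these tests, a countable union bound then yields $\pmap{\psvar}(\{\ssvar : (\psvar,\ssvar) \notin \mathsf{SR}^{\prodm}\}) = 0$, as required; note that $\pmap{\psvar}$ is a genuine probability measure for $\psvar \in \mathsf{SR}^{\prior}$ by Proposition~\ref{prop:likelihoodareprob}.

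Fix such a test and set $f_n(\psvar) := \pmape{U_n^{\psvar}}{\psvar}$. By Proposition~\ref{prop:improve}, $f_n$ is uniformly $L_1(\prior)$-computable and admits a version $g_n - h_n$ as a difference of uniformly computable $L_1(\prior)$ Schnorr tests; moreover, on $\mathsf{SR}^{\prior}$ this version agrees pointwise with the true cross-sectional measure $\pmap{\psvar}(U_n^{\psvar})$. The Fubini identity~(\ref{eqn:formula:int:mu}) gives $\|f_n\|_{L_1(\prior)} = \prodm(U_n) \leq 2^{-n}$. Since $V^{\psvar} \subseteq U_n^{\psvar}$ for every $n$, for $\psvar \in \mathsf{SR}^{\prior}$ we have $\pmap{\psvar}(V^{\psvar}) \leq f_n(\psvar)$, so it suffices to show that $\inf_n f_n(\psvar) = 0$.

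Fix rational $\epsilon > 0$. Since $f_n \geq 0$ holds $\prior$-a.s., the uniform form of Proposition~\ref{prop:miyabeish}(\ref{prop:miyabeish:1}), applied to $g_n - h_n$ with the constant interval $(\epsilon/2, \epsilon)$, produces a computable real $r \in (\epsilon/2, \epsilon)$ and a computable sequence of $\Sigma^{0,\prior}_2$ classes $B_n$ that agree with $\{\psvar : f_n(\psvar) > r\}$ on $\mathsf{SR}^{\prior}$ and have uniformly computable $\prior$-measure. Markov's inequality gives $\prior(B_n) \leq \|f_n\|_{L_1(\prior)}/r \leq 2^{-n+1}/\epsilon$. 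Passing to a computable subsequence $n_k$ with $2^{-n_k+1}/\epsilon \leq 2^{-k}$ yields $\prior(B_{n_k}) \leq 2^{-k}$, so by Proposition~\ref{prop:schnorrsigmazerotwonutest} one has $\psvar \notin \bigcap_k B_{n_k}$, i.e., $f_{n_k}(\psvar) \leq r < \epsilon$ for some $k$. Thus $\inf_n f_n(\psvar) < \epsilon$ for every rational $\epsilon > 0$, giving $\inf_n f_n(\psvar) = 0$ as desired. The main subtlety is that our $L_1(\prior)$-version of $f_n$ coincides with the genuine cross-sectional measure only on $\mathsf{SR}^{\prior}$, which is why it is essential to route every pointwise comparison through Proposition~\ref{prop:improve}(\ref{prop:improve:2}) and restrict to $\psvar \in \mathsf{SR}^{\prior}$ throughout.
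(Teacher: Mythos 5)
Your proof is correct and follows essentially the same route as the paper's: both use Proposition~\ref{prop:improve} to make the sections $\psvar\mapsto \pmape{U_n^{\psvar}}{\psvar}$ $L_1(\prior)$ computable, Proposition~\ref{prop:miyabeish} to replace their superlevel sets by $\Sigma^{0,\prior}_2$ classes, a Markov-type estimate against $\prodm(U_n)\leq 2^{-n}$, and Proposition~\ref{prop:schnorrsigmazerotwonutest} to force the section measures to vanish along the test, finishing with the same countable union over tests. The only difference is cosmetic bookkeeping (the paper passes to the subsequence $U_{2n}$ with thresholds $\eta_n\in(2^{-n},2^{-n+1})$ in one pass, while you fix a rational $\epsilon$ and iterate over countably many $\epsilon$).
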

\begin{proof}
Fix $\psvar$ in  $\mathsf{SR}^{\prior}$. Suppose $(\psvar,\ssvar)$ is not in $\mathsf{SR}^{\prodm}$. Choose a sequential Schnorr $\prodm$-test $U_n$ with $(\psvar,\ssvar)$ in $\bigcap_n U_n$. By Proposition~\ref{prop:improve} the map $\psvar^{\prime}\mapsto \pmape{U_n^{\psvar^{\prime}}}{\psvar^{\prime}}$ is $L_1(\prior)$ computable, uniformly in $n\geq 0$. By Proposition~\ref{prop:miyabeish}, there is a computable sequence of computable reals $\eta_n$ in the interval $(2^{-n}, 2^{-n+1})$ and a computable sequence of $\Sigma^{0,\prior}_2$ classes $B_n$ such that $B_n=\{\psvar^{\prime}: \pmape{U_{2n}^{\psvar^{\prime}}}{\psvar^{\prime}}>\eta_n\}$ on $\mathsf{SR}^{\prior}$. 
Then:
\begin{equation*}
2^{-2n}\geq \prodm(U_{2n}) = \int_{\pspace} \pmape{U_{2n}^{\psvar^{\prime}}}{\psvar^{\prime}} \; dp(\psvar^{\prime}) \geq \int_{B_n} \pmape{U_{2n}^{\psvar^{\prime}}}{\psvar^{\prime}} \; dp(\psvar^{\prime}) \geq 2^{-n} \prior(B_n)
\end{equation*}
By Proposition~\ref{prop:schnorrsigmazerotwonutest} and by $\psvar$ being in  $\mathsf{SR}^{\prior}$, there is $n_0> 0$ such that for all $n\geq n_0$ one has $\pmape{U_{2n}^{\psvar}}{\psvar} \leq 2^{-n+1}$. Then by Proposition~\ref{prop:improve}(\ref{prop:improve:2}), the sample $\ssvar$ is in the  $\pmap{\psvar}$-null set $\bigcap_{n\geq n_0} U_{2n}^{\psvar}$. With $\psvar$ fixed, there are only countably many such null sets for each Schnorr $\prodm$-test $U_n$. And since there are only countably many Schnorr $\prodm$-tests, there are only $\pmap{\psvar}$-null many $\ssvar$ such that $(\psvar,\ssvar)$ is not in $\mathsf{SR}^{\prodm}$.
\end{proof}

Finally, as mention in \S\ref{sec:intro}, in the case where the parameter space is Cantor space, one has that our version of the likelihood (Definition~\ref{defn:conventionversions}) agrees with Takahashi's preferred version of the likelihood (\cite[Definition 2.2]{Takahashi2023-rl}):
\begin{prop}\label{prop:relationtotakahasi}
If $\pspace=2^{\mathbb{N}}$ and $\psvar$ in $\mathsf{SR}^{\prior}$, one has $\lim_n \prodm(\pspace\times [\sigma] \mid [\psvar_n]\times \sspace)=\pmape{\sigma}{\psvar}$ for all $\sigma$ in $T$.
\end{prop}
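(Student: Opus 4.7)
The plan is to recognize the left-hand side as a conditional expectation of the likelihood with respect to the dyadic filtration on Cantor space, and then to invoke an effective L\'evy Upward Theorem (from the authors' prior work \cite{Huttegger2024-zb}) applied separately to the two $L_1(\prior)$ Schnorr tests whose difference gives the likelihood.

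First, I would unwind the conditional probability. By the definition of the joint distribution,
\begin{equation*}
\prodm(\pspace\times [\sigma]\mid [\psvar_n]\times \sspace)
\;=\; \frac{\prodm([\psvar_n]\times[\sigma])}{\prodm([\psvar_n]\times\sspace)}
\;=\; \frac{1}{\prior([\psvar_n])}\int_{[\psvar_n]} \pmape{\sigma}{\psvar'}\, d\prior(\psvar').
\end{equation*}
For $\psvar$ in $\mathsf{SR}^{\prior}\subseteq \mathsf{KR}^{\prior}$, the denominator is strictly positive for every $n$, since otherwise the c.e.\ open complement of the clopen $[\psvar_n]$ would have $\prior$-measure one while missing $\psvar$, contradicting Kurtz randomness. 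Since the displayed expression depends on $\psvar$ only through $\psvar_n$, it coincides with the conditional expectation $E_{\prior}[\pmape{\sigma}{\cdot}\mid \mathscr{F}_n](\psvar)$, where $\mathscr{F}_n$ is the finite $\sigma$-algebra generated by the length-$n$ cylinders of $2^{\mathbb{N}}$.

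Second, I would apply the effective L\'evy Upward Theorem. By Definition~\ref{defn:assum}(\ref{assum2.5}) together with Definition~\ref{defn:conventionversions}, we have the decomposition $\pmape{\sigma}{\cdot}=\gmape{\sigma}{\cdot}-\hmape{\sigma}{\cdot}$, uniformly in $\sigma\in T$, with $\gmape{\sigma}{\cdot}$ and $\hmape{\sigma}{\cdot}$ both $L_1(\prior)$ Schnorr tests. Apply the effective L\'evy Upward Theorem of \cite{Huttegger2024-zb} to each of $g(\sigma,\cdot)$ and $h(\sigma,\cdot)$ on the computable Polish space $2^{\mathbb{N}}$ with the canonical dyadic filtration $\mathscr{F}_n$. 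This yields
\begin{equation*}
\lim_n E_{\prior}[\gmape{\sigma}{\cdot}\mid \mathscr{F}_n](\psvar) = \gmape{\sigma}{\psvar}, \qquad \lim_n E_{\prior}[\hmape{\sigma}{\cdot}\mid \mathscr{F}_n](\psvar) = \hmape{\sigma}{\psvar}
\end{equation*}
for all $\psvar\in \mathsf{SR}^{\prior}$. Subtracting gives the desired identity. Both limits are finite on $\mathsf{SR}^{\prior}$ since Schnorr tests take finite values on the Schnorr randoms, so the convention in Definition~\ref{defn:conventionversions} about infinities does not intervene; and the result is uniform in $\sigma\in T$ because the $L_1(\prior)$ indices of $g(\sigma,\cdot)$ and $h(\sigma,\cdot)$ are uniform in $\sigma$.

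The main obstacle, such as it is, is verifying that the effective L\'evy Upward Theorem of \cite{Huttegger2024-zb} is stated in a form that applies directly to an $L_1(\prior)$ Schnorr test and the dyadic filtration on Cantor space, with convergence holding at every Schnorr random. This is the paradigmatic case of that theorem, so nothing beyond invocation is required. Everything else in the argument is a direct computation from the definition of $\prodm$ and the fact that $\mathsf{SR}^{\prior}\subseteq \mathsf{KR}^{\prior}$.
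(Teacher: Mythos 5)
Your proposal is correct and follows essentially the same route as the paper, which likewise proves the result by combining the identity $\prodm(\pspace\times [\sigma] \mid [\psvar_n]\times \sspace)=\frac{1}{\prior([\psvar_n])} \int_{[\psvar_n]} \pmape{\sigma}{\psvar^{\prime}}\; d\prior(\psvar^{\prime})$ with the effective L\'evy Upward Theorem from \cite{Huttegger2024-zb}. Your additional details (positivity of the denominator via Kurtz randomness, and applying the theorem separately to the two Schnorr tests $\gmape{\sigma}{\cdot}$ and $\hmape{\sigma}{\cdot}$ so as to respect the convention of Definition~\ref{defn:conventionversions}) are exactly the steps the paper leaves implicit.
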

\begin{proof}
This follows from the identity $\prodm(\pspace\times [\sigma] \mid [\psvar_n]\times \sspace)=\frac{1}{\prior([\psvar_n])} \int_{[\psvar_n]} \pmape{\sigma}{\psvar^{\prime}}\; d\prior(\psvar^{\prime})$ and the effective version of L\'evy's Upward Theorem.\footnote{\cite{Huttegger2024-zb}; see discussion therein of \cite{Rute2012aa}, \cite{Pathak2014aa}.}  
\end{proof}

\section{Proof of Theorem~\ref{thm:doob}}\label{sec:proofdoob}

We first note the following elementary result:

\begin{prop}\label{prop:switcheroo}
Suppose $f,f_n:\sspace\rightarrow \pspace$ are Borel measurable. Suppose that $f:\sspace\rightarrow \pspace$ is such that $f(\ssvar)=\psvar$ for all $(\psvar,\ssvar)$ in a $\prodm$-measure one set. Then:
\begin{enumerate}[leftmargin=*]
    \item\label{prop:switcheroo:1} For all Borel $A\subseteq \pspace$ and $B\subseteq \sspace$, one has $A\times B=\pspace\times (f^{-1}(A)\cap B)$ $\prodm$-a.s.
    \item\label{prop:switcheroo:2} For all Borel $A\subseteq \pspace$ one has $\poste{A}{\sigma} = \prodm_{\prss}(f^{-1}(A)\mid \sigma)$ if $\prodm_{\prss}(\sigma)>0$.
    \item\label{prop:switcheroo:3} If $\lim_n (f_n\circ \prss)=(f\circ \prss)$ $\prodm$-a.s., then $\lim_n f_n=f$ $\prodm_{\prss}$-a.s.
\end{enumerate}
\end{prop}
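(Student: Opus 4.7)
The whole proposition is a direct consequence of the fact that the set
\[
E=\{(\psvar,\ssvar)\in \prodspace:\ f(\ssvar)=\psvar\}
\]
is a $\prodm$-measure one subset of $\prodspace$, and so I would organize the proof by first isolating this set and then manipulating it in each of the three parts.

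For part~(\ref{prop:switcheroo:1}), my plan is to show that the symmetric difference of $A\times B$ and $\pspace\times (f^{-1}(A)\cap B)$ is contained in $\prodspace\setminus E$. Indeed, for any $(\psvar,\ssvar)$ in $E$ we have the chain of equivalences: $(\psvar,\ssvar)\in A\times B$ iff $\psvar\in A$ and $\ssvar\in B$ iff $f(\ssvar)\in A$ and $\ssvar\in B$ iff $(\psvar,\ssvar)\in \pspace\times (f^{-1}(A)\cap B)$. Since $\prodm(\prodspace\setminus E)=0$, the two sets are $\prodm$-a.s. equal.

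For part~(\ref{prop:switcheroo:2}), I would combine (\ref{prop:switcheroo:1}) with the fact that the numerator and denominator in (\ref{eqn:post}) are, via (\ref{eqn:formula:int:mu}), just $\prodm(A\times [\sigma])$ and $\prodm(\pspace\times [\sigma])$ respectively. Applying (\ref{prop:switcheroo:1}) with $B=[\sigma]$ gives $\prodm(A\times [\sigma])=\prodm(\pspace\times (f^{-1}(A)\cap [\sigma]))=\prodm_{\prss}(f^{-1}(A)\cap [\sigma])$, while the denominator is $\prodm_{\prss}([\sigma])$, which is positive by assumption. Taking the ratio yields the conditional probability $\prodm_{\prss}(f^{-1}(A)\mid [\sigma])$.

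For part~(\ref{prop:switcheroo:3}), the key observation is that the event $N=\{(\psvar,\ssvar):\lim_n (f_n\circ\prss)(\psvar,\ssvar)\neq (f\circ\prss)(\psvar,\ssvar)\}$ does not depend on the first coordinate, so $N=\pspace\times N'$ where $N'=\{\ssvar:\lim_n f_n(\ssvar)\neq f(\ssvar)\}$. The hypothesis gives $\prodm(N)=0$, and since the pushforward under $\prss$ satisfies $\prodm(\pspace\times N')=\prodm_{\prss}(N')$, we conclude $\prodm_{\prss}(N')=0$, which is exactly the desired statement. I do not anticipate any real obstacle here, since all three parts are pure measure-theoretic bookkeeping on the set $E$; the only care needed is ensuring measurability of the sets involved, which is immediate from the Borel measurability of $f, f_n$ and of the projections.
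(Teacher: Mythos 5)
Your proof is correct; the paper states this proposition without proof, calling it elementary, and your argument is exactly the measure-theoretic bookkeeping it has in mind. Two small points worth making explicit: in (\ref{prop:switcheroo:1}) the set $E=\{(\psvar,\ssvar): f(\ssvar)=\psvar\}$ is Borel because it is the preimage of the (closed) diagonal of $\pspace\times\pspace$ under the Borel map $(\psvar,\ssvar)\mapsto(\psvar,f(\ssvar))$, and it contains the hypothesized measure-one set, so $\prodm(E)=1$ as you use. In (\ref{prop:switcheroo:3}) your observation that $N=\pspace\times N'$ with $N'=\{\ssvar:\lim_n f_n(\ssvar)\neq f(\ssvar)\}$ Borel (written as a countable Boolean combination of the Borel sets $\{\ssvar: d(f_n(\ssvar),f(\ssvar))<1/k\}$) is actually slightly cleaner than the route the paper's accompanying remark suggests, which obtains the exceptional set as a projection of the antecedent's measure-one event and therefore has to invoke universal measurability of analytic sets; your full saturated set $\sspace\setminus N'$ is Borel outright and contains the projection, so no such appeal is needed.
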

 \noindent In (\ref{prop:switcheroo:3}), the function $\prss:\prodspace\rightarrow \sspace$ is again the projection (cf. diagram in \S\ref{sec:intro}).  Further, in~(\ref{prop:switcheroo:3}), if the $\prodm$-measure one event in the antecedent is Borel then the $\prodm$-measure one event in the consequent can be chosen to be Borel using the proof of the universal measurability of analytic sets (\cite[Theorem 21.10]{Kechris1995-hr}).

The following is the key proposition used in the proof of Theorem~\ref{thm:doob}:
\begin{prop}\label{prop:thiswashard}
Suppose that the prior $p$ is computable. 

Suppose there is a sequence of uniformly computable continuous functions $f_n:\sspace\rightarrow \pspace$ satisfying conditions~(\ref{thm:doob1})-(\ref{thm:doob2}) of Theorem~\ref{thm:doob}. Let $f=\lim_n f_n$ when it exists; and zero otherwise. 

Let $U\subseteq \pspace$ be c.e. open with $\prior(U)$ computable.

Then $I_{f^{-1}(U)}$ is a computable point of $L_1(\prodm_{\prss})$. 

Further, for all rational $\epsilon>0$, one can compute an index for an element $V\subseteq U$ of the $\prior$-computable basis and an $n\geq 0$ such that $\|I_{f^{-1}(U)}-I_{f_n^{-1}(V)}\|_{L_1(\prodm_{\prss})}<\epsilon$, which is the same as $\prodm_{\prss}(f^{-1}(U)\triangle f^{-1}_n(V))<\epsilon$.
\end{prop}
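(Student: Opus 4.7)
The plan is to use the triangle inequality to split $\|I_{f^{-1}(U)} - I_{f_n^{-1}(V)}\|_{L_1(\prodm_{\prss})}$ into $\|I_{f^{-1}(U)} - I_{f^{-1}(V)}\|_{L_1(\prodm_{\prss})}$ plus $\beta_n := \|I_{f^{-1}(V)} - I_{f_n^{-1}(V)}\|_{L_1(\prodm_{\prss})}$, and make each smaller than $\epsilon/2$ effectively. Without loss of generality I assume the $\prior$-computable basis is closed under finite unions; this preserves hypothesis (\ref{thm:doob2}) of Theorem~\ref{thm:doob} because finite unions of events that are both c.e.\ open and effectively closed remain both. Writing $U = \bigcup_i W_i$ as an effective union of basis elements, the finite unions $V_N := \bigcup_{i<N} W_i$ are themselves basis elements with uniformly computable $\prior$-measure increasing to $\prior(U)$; since $\prior(U)$ is computable, I can effectively find $N$ with $\prior(U \setminus V_N) < \epsilon/2$ and set $V := V_N$. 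Then Proposition~\ref{prop:switcheroo}(\ref{prop:switcheroo:1}) with $A = U \setminus V$ and $B = \sspace$ yields the first bound $\|I_{f^{-1}(U)} - I_{f^{-1}(V)}\|_{L_1(\prodm_{\prss})} = \prodm_{\prss}(f^{-1}(U \setminus V)) = \prior(U \setminus V) < \epsilon/2$.

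For the second bound, the non-effective fact that $\beta_n \to 0$ goes as follows. Let $V^*$ denote the effectively closed superset of $V$ from the basis, so $\prior(V^* \setminus V) = 0$ and thus $\prodm_{\prss}(f^{-1}(V^* \setminus V)) = 0$ by Proposition~\ref{prop:switcheroo}(\ref{prop:switcheroo:1}). By Proposition~\ref{prop:switcheroo}(\ref{prop:switcheroo:3}) and hypothesis (\ref{thm:doob1}), one has $f_n \to f$ $\prodm_{\prss}$-a.s. Since $f(\ssvar) \in V$ (open) implies $f_n(\ssvar) \in V$ eventually, and $f(\ssvar) \in (V^*)^c$ (open) implies $f_n(\ssvar) \notin V^* \supseteq V$ eventually, while the residual case $f(\ssvar) \in V^* \setminus V$ is $\prodm_{\prss}$-null, it follows that $I_{f_n^{-1}(V)} \to I_{f^{-1}(V)}$ $\prodm_{\prss}$-a.s.; dominated convergence then gives $\beta_n \to 0$.

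The main obstacle is making this limit \emph{effective}. My key identity, obtained from $\|I_A - I_B\|_{L_1(\mu)} = \mu(A) + \mu(B) - 2\mu(A \cap B)$ together with the Switcheroo identities $\prodm_{\prss}(f^{-1}(V)) = \prior(V)$ and $\prodm_{\prss}(f_n^{-1}(V) \cap f^{-1}(V)) = \prodm(V \times f_n^{-1}(V))$ from Proposition~\ref{prop:switcheroo}(\ref{prop:switcheroo:1}), is
\[
\beta_n = \prodm_{\prss}(f_n^{-1}(V)) + \prior(V) - 2\, \prodm\bigl(V \times f_n^{-1}(V)\bigr).
\]
I argue that all three terms are uniformly computable in $n$: $\prodm_{\prss}(f_n^{-1}(V))$ is computable because $f_n^{-1}(V)$ is simultaneously c.e.\ open and effectively closed by hypothesis (\ref{thm:doob2}), making its measure both left- and right-c.e.; $\prior(V)$ is computable by construction; and for the last term, applying Proposition~\ref{prop:improve}(\ref{prop:improve:1}) to the c.e.\ open $\pspace \times f_n^{-1}(V) \subseteq \prodspace$ (which has $\prodm$-measure equal to the computable $\prodm_{\prss}(f_n^{-1}(V))$) yields that $g_n(\psvar) := \pmape{f_n^{-1}(V)}{\psvar}$ is $L_1(\prior)$-computable uniformly in $n$; since both $I_V$ and $g_n$ are bounded by $1$, Propositions~\ref{prop:indicator}(\ref{prop:indicator:1}) and~\ref{prop:productsL1nu} give that $I_V \cdot g_n$ is $L_1(\prior)$-computable, so $\prodm(V \times f_n^{-1}(V)) = \int I_V \cdot g_n \, d\prior$ is computable. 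Hence $\beta_n$ is a uniformly computable sequence of reals tending to $0$, and I can effectively search for $n$ with $\beta_n < \epsilon/2$, yielding the desired inequality.

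Finally, the $L_1(\prodm_{\prss})$-computability of $I_{f^{-1}(U)}$ is immediate from this construction: each $I_{f_n^{-1}(V)}$ is an $L_1(\prodm_{\prss})$-computable point by Proposition~\ref{prop:indicator}(\ref{prop:indicator:1}), and the effective $L_1(\prodm_{\prss})$-approximations just produced exhibit $I_{f^{-1}(U)}$ as a computable point of $L_1(\prodm_{\prss})$.
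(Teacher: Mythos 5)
Your proof is correct and follows essentially the same route as the paper's: approximate $U$ from inside by a finite union $V$ of basis elements at cost $\prior(U\setminus V)$, then use the $\prodm_{\prss}$-a.s.\ convergence $I_{f_n^{-1}(V)}\to I_{f^{-1}(V)}$ together with the identities of Proposition~\ref{prop:switcheroo}(\ref{prop:switcheroo:1}) to exhibit $\|I_{f^{-1}(V)}-I_{f_n^{-1}(V)}\|_{L_1(\prodm_{\prss})}$ as a uniformly computable sequence tending to zero. The only differences are cosmetic: you dispose of the boundary $V^{*}\setminus V$ by $\prodm_{\prss}$-nullity where the paper uses Kurtz randomness of $\psvar=f(\ssvar)$, and you compute the cross term $\prodm(V\times f_n^{-1}(V))$ via Proposition~\ref{prop:improve}(\ref{prop:improve:1}) and Proposition~\ref{prop:productsL1nu} where the paper directly invokes the computability of the $\prodm$-measure of Boolean combinations of c.e.\ opens with computable measure.
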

\begin{proof}
First suppose that $V\subseteq \pspace$ is an element of the $\prior$-computable basis, with effectively closed $C\supseteq V$ such that $V,C$ have same $\prior$ measure. We first show that  $I_{f^{-1}(V)}$ is a computable point of $L_1(\prodm_{\prss})$.

Let $D\subseteq \pspace\times \sspace$ be a $\prodm$-measure one subset of $\mathsf{KR}^{\prior}\times \sspace$ such that for $(\psvar, \ssvar)$ in $D$ we have $\lim_n f_n(\ssvar)$ exists and is equal to $\psvar$.

Let us show that $\lim_n (I_{f^{-1}_n(V)}\circ \prss)=I_{f^{-1}(V)}\circ \prss$ $\prodm$-a.s. It suffices to show that it happens on $D$. Suppose that $(\psvar, \ssvar)$ is in $D$. 
\begin{itemize}[leftmargin=*]
\item First suppose that $I_{f^{-1}(V)}(\ssvar)=1$. Then $f(\ssvar)$ in $V$. Since $V$ is open and we are on $D$, eventually $f_n(\ssvar)$ in $V$, which implies that eventually $I_{f^{-1}_n(V)}(\ssvar)=1$.
\item Second suppose that $I_{f^{-1}(V)}(\ssvar)=0$. Then $f(\ssvar)$ is in $\pspace\setminus V$. Since we are on $D\subseteq \mathsf{KR}^{\prior}\times \sspace$, we have that $\psvar=f(\ssvar)$ is in the c.e. open $\pspace \setminus C$. Suppose that there were infinitely many $n\geq 0$ with $I_{f_n^{-1}(V)}(\ssvar)=1$, so that there were infinitely many $n\geq 0$ with $f_n(\ssvar)$ in $V$. Since we are on $D$, we then have that $f(\ssvar)$ is in the topological closure $\overline{V}$ of $V$, and hence $f(\ssvar)$ in $C$, since $C$ is an effectively closed superset of $V$; a contradiction. Hence, rather there are only finitely many $n\geq 0$ with $I_{f_n^{-1}(V)}(\ssvar)=1$, and hence $\lim_n f_n(\ssvar)=0$, which is what we wanted to show.
\end{itemize}
Since $\lim_n (I_{f^{-1}_n(V)}\circ \prss)=I_{f^{-1}(V)}\circ \prss$ $\prodm$-a.s., by Proposition~\ref{prop:switcheroo}(\ref{prop:switcheroo:3}) we have that $\lim_n I_{f^{-1}_n(V)}=I_{f^{-1}(V)}$ $\prodm_{\prss}$-a.s. By DCT, $I_{f^{-1}_n(V)}\rightarrow I_{f^{-1}(V)}$ in $L_1(\prodm_{\prss})$. 

We argue that $I_{f^{-1}_n(V)}$ is uniformly a computable point of $L_1(\prodm_{\prss})$. Since $f_n^{-1}(V)$ is c.e. open we have that $I_{f^{-1}_n(V)}$ is lsc. It further has computable $\prodm_{\prss}$ expectation. For: by our hypothesis on the $\prior$-computable basis, we have that $f_n^{-1}(V)$ is both c.e. open and effectively closed, so that $\mathbb{E}_{\prodm_{\prss}} I_{f_n^{-1}(V)} = \prodm_{\prss}(f_n^{-1}(V))$ is both left-c.e. and right-c.e. and hence computable. Hence $I_{f^{-1}_n(V)}$ is an $L_1(\prodm_{\prss})$ Schnorr test and so a computable point of $L_1(\prodm_{\prss})$.

Then the $L_1(\prodm_{\prss})$ distance between $I_{f^{-1}_n(V)}, I_{f^{-1}(V)}$ is uniformly computable:
\begin{align}
& \|I_{f^{-1}_n(V)}- I_{f^{-1}(V)}\|_{L_1(\prodm_{\prss})}  =\prodm_{\prss}(f^{-1}(V)\triangle f_n^{-1}(V)) =\prodm\big(\pspace \times (f^{-1}(V)\triangle f_n^{-1}(V))\big)\notag \\
& = \prodm\big((\pspace \times f^{-1}(V))\triangle (\pspace\times f_n^{-1}(V)\big) = \prodm\big((V \times \sspace)\triangle (\pspace\times f_n^{-1}(V))\big) \label{eqn:finally}
\end{align}
To get the last identity, we use Proposition~\ref{prop:switcheroo}(\ref{prop:switcheroo:1}). Further, $\prodm(V\times \sspace)=\prior(V)$, which is computable by hypothesis. And $\prodm(\pspace\times f_n^{-1}(V))=\prodm_{\prss}(f_n^{-1}(V))$, which is computable by the previous paragraph. Hence, since (\ref{eqn:finally}) is the measure of a Boolean combination of c.e. open events in $S$ with computable $\prodm$-measure, it too has $\prodm$-computable measure, and uniformly so.

Hence indeed $I_{f^{-1}(V)}$ is a computable point of $L_1(\prodm_{\prss})$, when $V\subseteq \pspace$ is an element of the $\prior$-computable basis.

Now suppose that $U$ is a c.e. open with $\prior(U)$ computable. Then choose a computable sequence of elements $V_n$ from the $\prior$-computable basis such that $U=\bigcup_n V_n$.  By the previous argument, one has that $I_{f^{-1}(V_n)}$ is uniformly a computable point of $L_1(\prodm_{\prss})$ (note that the $n$-subscript is on $V_n$ and not on $f$ at this point in the argument).  One has that:
\begin{align}
& \|I_{f^{-1}(V_n)}- I_{f^{-1}(U)}\|_{L_1(\prodm_{\prss})}  =\prodm_{\prss}(f^{-1}(V_n)\triangle f^{-1}(U))  =\prodm\big(\pspace \times (f^{-1}(U)\triangle f^{-1}(V_n))\big) \notag \\
& = \prodm\big((\pspace \times f^{-1}(U))\triangle (\pspace\times f^{-1}(V_n)\big) = \prodm\big((U \times \sspace)\triangle (V_n\times \sspace)\big)  = \prior(U\setminus V_n)
\label{eqn:finally2}
\end{align}
The second to last identity follows again Proposition~\ref{prop:switcheroo}(\ref{prop:switcheroo:1}). Hence the $L_1(\prodm_{\prss})$ distance between $I_{f^{-1}(V_n)}, I_{f^{-1}(U)}$ is uniformly computable and goes to zero. Hence we can compute a subsequence such that $I_{f^{-1}(V_n)}\rightarrow I_{f^{-1}(U)}$ fast, so that $I_{f^{-1}(U)}$ is likewise a computable point of $L_1(\prodm_{\prss})$.
\end{proof}

Finally, we need one classical estimate from Doob's Maximal Inequality:\footnote{\cite[Theorem 9.4]{Gut2013-ou}.} 
\begin{prop}\label{prop:estimatedoobxmax}
(Classical estimate from Doob's Maximal Inequality).

 Let $\ssfilt{n}$ be the $\sigma$-algebra of events on $\sspace$ generated by the basic clopens $[\sigma]$, where $\sigma$ ranges over length $n$ strings from $T$. 

Suppose that $B\subseteq \sspace$ is a Borel event. Then, in variable $n\geq 0$, the sequence of functions $M_n(\ssvar):=\prodm_{\prss}(B\mid \ssvarn{n})$ is a classical martingale in $L_2(\prodm_{\prss})$ with respect to $\ssfilt{n}$. Then, for all $m\geq 0$, one has $\|\sup_{n\geq m} \prodm_{\prss}(B\mid \cdot_n)\|_{L_2(\prodm_{\prss})}\leq 2\cdot \sqrt{\prodm_{\prss}(B)}$.
\end{prop}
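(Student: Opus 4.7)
The plan is to identify $M_n$ as a conditional expectation martingale and then apply the cited $L^2$ form of Doob's maximal inequality. First I would observe that $\ssfilt{n}$ is the finite $\sigma$-algebra generated by the partition of $\sspace$ into the cylinders $[\sigma]$ for $\sigma$ of length $n$ in $T$, and that the filtrations are increasing in $n$. On the complement of the $\prodm_{\prss}$-null set on which $\prodm_{\prss}(\ssvarn{n})=0$, the function $M_n(\ssvar)=\prodm_{\prss}(B\mid\ssvarn{n})$ agrees with $\mathbb{E}_{\prodm_{\prss}}[I_B\mid\ssfilt{n}](\ssvar)$ by the usual computation of conditional expectations against a finite partition. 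Since $0\leq I_B\leq 1$, we have $I_B\in L_2(\prodm_{\prss})$, and the tower property then yields $\mathbb{E}_{\prodm_{\prss}}[M_{n+1}\mid\ssfilt{n}]=M_n$, so $(M_n)_{n\geq 0}$ is a non-negative $L_2(\prodm_{\prss})$ martingale with respect to $(\ssfilt{n})_{n\geq 0}$.

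For the maximal bound, the key preliminary step is to control $\|M_N\|_{L_2(\prodm_{\prss})}$. Since $0\leq M_N\leq 1$ pointwise (conditional expectation preserves these bounds), we have $M_N^2\leq M_N$, and hence
\begin{equation*}
\|M_N\|_{L_2(\prodm_{\prss})}^2\;\leq\;\mathbb{E}_{\prodm_{\prss}}[M_N]\;=\;\mathbb{E}_{\prodm_{\prss}}[I_B]\;=\;\prodm_{\prss}(B).
\end{equation*}
Now I would apply Doob's $L^p$ maximal inequality \cite[Theorem 9.4]{Gut2013-ou} with $p=2$ to the non-negative martingale $(M_n)$: for each $N\geq 0$,
\begin{equation*}
\|\max_{n\leq N} M_n\|_{L_2(\prodm_{\prss})}\;\leq\;\tfrac{p}{p-1}\|M_N\|_{L_2(\prodm_{\prss})}\;=\;2\|M_N\|_{L_2(\prodm_{\prss})}\;\leq\;2\sqrt{\prodm_{\prss}(B)}.
\end{equation*}
Letting $N\to\infty$ and applying monotone convergence to the non-decreasing sequence $(\max_{n\leq N} M_n)^2$ yields $\|\sup_{n\geq 0} M_n\|_{L_2(\prodm_{\prss})}\leq 2\sqrt{\prodm_{\prss}(B)}$, and the desired bound for $\sup_{n\geq m}$ then follows at once from the pointwise inequality $\sup_{n\geq m} M_n\leq \sup_{n\geq 0} M_n$.

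The result is fundamentally classical and I expect no serious obstacle. The only mild care required is the handling of the null set on which $\prodm_{\prss}(\ssvarn{n})=0$ makes the ratio defining $M_n$ ill-posed, but on such a set any value yields a legitimate version of the conditional expectation, so the standard martingale machinery applies without modification.
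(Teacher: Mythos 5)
Your proof is correct and follows essentially the same route as the paper's: identify $M_n$ as the conditional expectation of $I_B$ with respect to the cylinder filtration, apply Doob's $L^2$ maximal inequality to get the factor $2$, bound $\|M_N\|_{L_2(\prodm_{\prss})}$ by $\sqrt{\prodm_{\prss}(B)}$ (you via $M_N^2\leq M_N$, the paper via conditional Jensen applied to $\|I_B\|_{L_2}$ — the same estimate), and pass to the limit by monotone convergence. The only cosmetic differences are that the paper handles $\sup_{n\geq m}$ by introducing a shifted submartingale that vanishes for $n<m$, where you simply dominate by $\sup_{n\geq 0}M_n$, and that the partition by length-$n$ cylinders is countable rather than finite, which changes nothing in the argument.
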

\begin{proof}
As a version of the conditional expectation, we use: $\mathbb{E}_{\prodm_{\prss}}[f\mid \ssfilt{n}](\ssvar):=\frac{1}{\prodm_{\prss}([\ssvarn{n}])} \int_{[\ssvarn{n}]} f(\ssvar^{\prime}) \; d\prodm_{\prss}(\ssvar^{\prime})$ when $\prodm_{\prss}([\ssvarn{n}])>0$, and zero otherwise.

Then $M_n(\ssvar)=\prodm_{\prss}(B\mid \ssvarn{n})$ is a classical martingale since it is the conditional expectation of the random variable $I_B$.

For the estimate, fix $m\geq 0$. Let $M_n^{\prime}(\ssvar)=0$ for $n< m$ and let $M_n^{\prime}(\ssvar)=M_n(\ssvar)$ for $n\geq m$. Then $M_n^{\prime}$ is a non-negative submartingale. Then by Doob's Maximal Inequality followed by Conditional Jensen one has:
\begin{equation*}
\|\sup_{m\leq n\leq k} \prodm_{\prss}(B\mid \cdot_n)\|_{L_2(\prodm_{\prss})} \leq 2\cdot \| \prodm_{\prss}(B\mid \cdot_k)\|_{L_2(\prodm_{\prss})}\leq 2\cdot \|I_{B}\|_{L_2(\prodm_{\prss})}= 2\cdot \sqrt{\prodm_{\prss}(B)}
\end{equation*}
Then we are done by applying MCT on right-hand side to take $k\rightarrow \infty$.

\end{proof}

Now we prove Theorem~\ref{thm:doob}:
\begin{proof}

By Proposition~\ref{prop:important} it suffices to show the part about pairs. Let $U\subseteq \pspace$ c.e. open with $\prior(U)$ computable. 

We show $\sup_{n\geq m} \poste{U}{\ssvarn{n}}$ is a Schnorr $L_2(\prodm_{\prss})$-test, uniformly in $m\geq 0$. By Proposition~\ref{prop:posteriorintegral} and $0\leq \poste{U}{\ssvarn{n}}\leq 1$, one has that $\sup_{n\geq m} \poste{U}{\ssvarn{n}}$ is lsc and $\leq 1$, uniformly in $m\geq 0$. Hence, it remains to show that $\sup_{n\geq m} \poste{U}{\ssvarn{n}}$ has computable $L_2(\prodm_{\prss})$-norm, uniformly in $m\geq 0$. To this end, it suffices to show that given rational $\epsilon>0$ we can compute an index for a computable point of $L_2(\prodm_{\prss})$ within $\epsilon$ of $\sup_{n\geq m} \poste{U}{\ssvarn{n}}$. 

Let $\epsilon>0$ be rational. By Proposition~\ref{prop:thiswashard}, compute an index for an element $V\subseteq U$ of the $\prior$-computable basis and compute $i\geq 0$ such that  $\prodm_{\prss}(f^{-1}(U)\triangle f^{-1}_i(V))<\frac{\epsilon^2}{8}$. Since $f_i^{-1}(V)$ is c.e. open, we can write $f_i^{-1}(V)=\bigsqcup_k [\sigma_k]$, for a computable sequence of strings $\sigma_k$ in $T$. Since $f_i^{-1}(V)$ is also effectively closed, we have $\prodm_{\prss}(f_i^{-1}(V))$ is computable. Hence, we can compute $j$ such that $\prodm_{\prss}(f_i^{-1}(V)\setminus \bigsqcup_{k\leq j} [\sigma_k])<\frac{\epsilon^2}{8}$. Putting these together, we have $\prodm_{\prss}(f^{-1}(U)\triangle \bigsqcup_{k\leq j} [\sigma_k])<\frac{\epsilon^2}{4}$. Compute $\ell$ which is strictly greater than the maximum length of $\sigma_k$ for $k\leq j$. Then one has the following, where the norm is with respect to $L_2(\prodm_{\prss})$ and where the first inequality uses Proposition~\ref{prop:switcheroo}(\ref{prop:switcheroo:2}):
\begin{align*}
& \|\sup_{n\geq m} \poste{U}{\ssvarn{n}} - \sup_{m\leq n \leq \ell} \prodm_{\prss}(\bigsqcup_{k\leq j} [\sigma_k]\mid \ssvarn{n})\|_2 =\|\sup_{n\geq m} \poste{U}{\ssvarn{n}} - \sup_{n\geq m} \prodm_{\prss}(\bigsqcup_{k\leq j} [\sigma_k]\mid \ssvarn{n})\|_2 \\
& \leq \|\sup_{n\geq m} \prodm_{\prss}(f^{-1}(U)\triangle \bigsqcup_{k\leq j} [\sigma_k]\mid \ssvarn{n})\|_2  \leq 2\cdot \sqrt{\prodm_{\prss}(f^{-1}(U)\triangle \bigsqcup_{k\leq j} [\sigma_k])} <\epsilon
\end{align*} 
The last two inequalities use Proposition~\ref{prop:estimatedoobxmax} and our earlier estimate. Since the function $\sup_{m\leq n \leq \ell} \prodm_{\prss}(\bigsqcup_{k\leq j} [\sigma_k]\mid \ssvarn{n})$ is a computable element of $L_2(\prodm_{\prss})$, we are done: hence indeed $\sup_{n\geq m} \poste{U}{\ssvarn{n}}$ is a Schnorr $L_2(\prodm_{\prss})$-test, uniformly in $m\geq 0$.

By change of variables, $(\psvar,\ssvar)\mapsto \sup_{n\geq m} \poste{U}{\ssvarn{n}}$ is a Schnorr $L_2(\prodm)$-test.

By the classical version of L\'evy's Upward Theorem, one has that $
\prodm_{\prss}(f^{-1}(U)\mid \ssvarn{n})\rightarrow I_{f^{-1}(U)}(\ssvar)$ for $\prodm_{\prss}$-a.s. many $\ssvar$ from $\sspace$. Hence by Proposition~\ref{prop:switcheroo} we have $\poste{U}{\ssvarn{n}}\rightarrow I_{U\times \sspace}(\psvar, \ssvar)$ for $\prodm$-a.s. many $(\psvar,\ssvar)$ from $\prodspace$.

Then putting the two previous paragraphs together with Proposition~\ref{prop:wr2propv2}, we have: for all $(\psvar,\ssvar)$ in $\mathsf{SR}^{\prodm}\setminus (U\times \sspace)$ that $\poste{U}{\ssvarn{n}} \rightarrow 0$. 

Now it remains to show that for all $(\psvar,\ssvar)$ in $\mathsf{SR}^{\prodm}\cap (U\times \sspace)$, $\poste{U}{\ssvarn{n}} \rightarrow 1$.

Let $U=\bigcup_m U_m$, where $U_m$ is a computable sequence of c.e. opens from the $\prior$-computable basis. Choose computable sequence $C_m\supseteq U_m$ of effectively closed sets such that $U_m, C_m$ have same $\prior$-measure.

It suffices to show that for each $m\geq 0$ and each $(\psvar,\ssvar)$ in $\mathsf{SR}^{\prodm}\cap (U_m\times \sspace)$ that $\poste{U_m}{\ssvarn{n}} \rightarrow 1$. For, given $(\psvar,\ssvar)$ in $\mathsf{KR}^{\prodm}$ one has that $\post{\ssvarn{n}}$ is in $\mathcal{P}(\pspace)$ by Proposition~\ref{prop:whenpostprob2} and so this and $U_m\subseteq U$ also makes it that for each $m\geq 0$ and $(\psvar,\ssvar)$ in $\mathsf{SR}^{\prodm}\cap (U_m\times \sspace)$ one has $\poste{U}{\ssvarn{n}} \rightarrow 1$.

Fix $m\geq 0$ for the remainder of the proof. Let $V_m=\pspace\setminus C_m$, which is c.e. open with computable $\prior$-measure. Then we can run the argument above to conclude that for all $(\psvar,\ssvar)$ in $\mathsf{SR}^{\prodm}\setminus (V_m\times \sspace)$ one has that $\poste{V_m}{\ssvarn{n}}\rightarrow 0$. Let $(\psvar,\ssvar)$ in $\mathsf{SR}^{\prodm}\cap (U_m\times \sspace)$. By Proposition~\ref{prop:elefact}, we have $\mathsf{SR}^{\prodm}\subseteq \mathsf{KR}^{\prior}\times \sspace$, and so $(\psvar,\ssvar)$ is in $\mathsf{SR}^{\prodm}\setminus (V_m\times \sspace)$. Hence $\poste{V_m}{\ssvarn{n}}\rightarrow 0$. 
Due to  $(\psvar,\ssvar)$ in $\mathsf{KR}^{\prodm}$, one has $\postws{\ssvarn{n}}$ is in $\mathcal{P}(\pspace)$ by Proposition~\ref{prop:whenpostprob2}, so that $\poste{U_m}{\ssvarn{n}}=\poste{C_m}{\ssvarn{n}} \rightarrow 1$. Note the identity follows from the fact that for $(\psvar,\ssvar)$ in $\mathsf{KR}^{\prodm}$ we have $\poste{\mathsf{KR}^{\prior}}{\ssvarn{n}}=1$, as one can see from the the formula for the posterior in (\ref{eqn:post}).

\end{proof}

\section{Limiting relative frequencies}\label{sec:paradigmatic}

In this section, we prove Proposition~\ref{prop:para}, largely following traditional proofs of the Strong Law of Large Numbers for Bernoulli trials (e.g. \cite[43-44]{McKean2014-zj}). Since we are proving Proposition~\ref{prop:para}, everything in this section (the parameter space, sample space, etc.) are as in the statement of the proposition.

Before we begin, since this proposition provides a sufficient condition for the application of Theorem~\ref{thm:doob}, let us say a brief word about this application. Let $C\subseteq \pspace$ of $\prior$-measure one be the set mentioned in the statement of Proposition~\ref{prop:para}.

First, note that since $\prior(C)=1$, we have that $\psvar$ is in $C$ whenever $(\psvar, \ssvar)$ is in $\mathsf{KR}^{\prodm}$, by Proposition~\ref{prop:elefact}.

Second, for a binary string $\sigma$ of length $n$ which possesses $i$ ones, the likelihood in Proposition~\ref{prop:para} is defined as $\pmape{\sigma}{\psvar}=\psvar^i \cdot (1-\psvar)^{n-i} \cdot I_{C}(\psvar)$. Then we can use the version in the sense of Definition~\ref{defn:conventionversions} given by $\gmape{\sigma}{\psvar} = \psvar^i \cdot (1-\psvar)^{n-i}$ and $\hmape{\sigma}{\psvar} = \psvar^i \cdot (1-\psvar)^{n-i}\cdot I_{\pspace\setminus C}(\psvar)$. Further, by the previous paragraph, for $\psvar$ in the $\prodm$-measure one set $\mathsf{KR}^{\prodm}$, we have $\gmape{\sigma}{\psvar}-\hmape{\sigma}{\psvar} = \psvar^i \cdot (1-\psvar)^{n-i}$.

Now we turn to the proof of Proposition~\ref{prop:para}. Where $\prps:\prodspace\rightarrow \pspace$ is the projection on the first component (cf. diagram in \S\ref{sec:intro}), we define:
\begin{defn}
$\alpha= \mathbb{E}_{\prodm} \prps=\int_{\pspace} \psvar \; d\prior(\psvar)$ and $\beta=\mathbb{E}_{\prodm} \prps^2=\int_{\pspace} \psvar^2 \; d\prior(\psvar)$.
\end{defn}
\noindent Then $\alpha\geq \beta$ are computable reals since $\prior$ is computable and the identity map from $\pspace$ to $\mathbb{R}$ and the square map are computable continuous. We can then calculate the expectations of the following random variables in terms of $\alpha,\beta$, where $g_i(\psvar,\ssvar)=\ssvar(i)$ and where $i,j$ distinct:
\begin{equation*}
\mathbb{E}_{\mu} g_i=\alpha, \hspace{2mm} \mathbb{E}_{\mu} g_i g_j = \beta, \hspace{2mm} \mathbb{E}_{\mu} \prps g_i =\beta, \hspace{2mm} \mathbb{E}_{\mu} (g_i-\prps)^2 = \alpha-\beta, \hspace{2mm} \mathbb{E}_{\mu} (g_i-\prps)(g_j-\prps) = 0
\end{equation*}

Then we show:
\begin{prop}\label{prop:fnthebasis} 

There is a $\prior$-computable basis on $\pspace$ such that for all $n\geq 0$ and all elements $U$ of the $\prior$-computable basis, the event $f_n^{-1}(U)$ is uniformly both c.e. open and effectively closed.

Further, for $\epsilon>0$ and $n\geq 0$ one has $\prodm(\left|f_n\circ \prss -\prps\right|>\epsilon)\leq \epsilon^{-2} \cdot n^{-1} \cdot (\alpha-\beta)$
\end{prop}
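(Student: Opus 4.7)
The two parts are essentially independent. Part 2 is a standard Chebyshev/variance computation; Part 1 requires a careful choice of the basis on $[0,1]$ so that the preimages under $f_n$ behave well for \emph{every} $n$ simultaneously.

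For the basis, my plan is to invoke the Effective Baire Category Theorem (as used in Hoyrup--R\'ojas' construction of a $\prior$-computable basis) to produce a computable dense sequence of \emph{computable irrationals} $r_0, r_1, \ldots$ in $(0,1)$ such that (i) $\prior(\{r_i\}) = 0$ for every $i$, and (ii) $\prior((r_i, r_j))$ is uniformly computable for $i,j$ with $r_i < r_j$ (with associated effectively closed sets $[r_i, r_j]$ having the same $\prior$-measure, since the endpoints are non-atoms). The irrationality requirement can be met jointly with the $\prior$-genericity requirement because the rationals form a $\prior$-null effectively meager set; this is where the Effective Baire Category Theorem does the work. The basis is then $\{(r_i, r_j) : r_i < r_j\}$, extended by $[0, r_j)$ and $(r_i, 1]$ in the natural way.

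Given such a basis, I claim $f_n^{-1}((r_i, r_j))$ is uniformly clopen. Since $f_n(\ssvar) = \tfrac{1}{n} \sum_{k<n} \ssvar(k)$ only takes the rational values $0/n, 1/n, \ldots, n/n$, and $r_i, r_j$ are irrational, for each $k \in \{0,\ldots,n\}$ the strict inequalities $r_i < k/n$ and $k/n < r_j$ are decidable from the computable-real representations of $r_i, r_j$ (just approximate until they separate from the rational $k/n$). Hence we can compute the finite set $I_n = \{k \leq n : r_i < k/n < r_j\}$, and $f_n^{-1}((r_i, r_j)) = \bigsqcup_{k \in I_n} \{\ssvar : \sum_{\ell<n} \ssvar(\ell) = k\}$ is a uniformly computable finite union of basic clopens in $2^{\mathbb{N}}$, hence uniformly both c.e.\ open and effectively closed. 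The main obstacle is just arranging the basis so that irrationality and $\prior$-genericity hold simultaneously; given the machinery already cited in the paper, this is routine.

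For Part 2, I apply Chebyshev's inequality to the random variable $f_n \circ \prss - \prps$ on $(\prodspace, \prodm)$. Writing $g_i(\psvar, \ssvar) = \ssvar(i)$, we have $f_n \circ \prss = \tfrac{1}{n} \sum_{i<n} g_i$, so
\begin{equation*}
\mathbb{E}_{\prodm}\bigl(f_n \circ \prss - \prps\bigr)^2 = \frac{1}{n^2} \sum_{i,j < n} \mathbb{E}_{\prodm}\bigl[(g_i - \prps)(g_j - \prps)\bigr].
\end{equation*}
Using the identities listed immediately before the proposition, the off-diagonal terms vanish and the diagonal terms each equal $\alpha - \beta$, giving $\mathbb{E}_{\prodm}(f_n \circ \prss - \prps)^2 = (\alpha - \beta)/n$. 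Chebyshev's inequality then yields
\begin{equation*}
\prodm\bigl(|f_n \circ \prss - \prps| > \epsilon\bigr) \leq \epsilon^{-2} \cdot \mathbb{E}_{\prodm}(f_n \circ \prss - \prps)^2 = \epsilon^{-2} n^{-1} (\alpha - \beta),
\end{equation*}
which is the stated bound. This part is a direct calculation once one has the orthogonality relations.
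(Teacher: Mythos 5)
Your proposal is correct and follows essentially the same route as the paper: the paper also invokes the Effective Baire Category Theorem to choose interval endpoints avoiding the countable set $\{\frac{1}{n}\sum_{i<n}\sigma(i):\sigma\in 2^n, n>0\}$ (rather than all rationals, but this is the same idea and your stronger irrationality requirement is equally achievable), so that membership of each $k/n$ in $(p,q)$ is decidable and the preimage and its complement are both effective unions of basic clopens; the second part is the identical Chebyshev computation. One cosmetic note: the rationals need not be $\prior$-null (a computable prior can have rational atoms), but only their effective meagerness is used in your argument, so nothing breaks.
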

\begin{proof}
Using the Effective Baire Category Theorem (\cite[Theorem 5.1.2]{Hoyrup2009-pl}), we can choose a $\prior$-computable basis consisting of intervals $(p,q)$ and $[0,p)$ and $(q,1]$ and $[0,1]$ such that $0\leq p<q\leq 1$ is a uniformly computable sequence of reals such that these $p,q$ are not equal to any element of the countable set $\{\frac{1}{n}\sum_{i<n} \sigma(i) : \sigma\in 2^n, n>0\}$. This assumption makes $I_{n,p,q} =\{\sigma\in 2^n: p<\frac{1}{n}\sum_{i<n} \sigma(i)<q\}$ and $J_{n,p,q}  =2^n\setminus I_{n,p,q}$ uniformly c.e. in $n,p,q$. Further, one has $f_n^{-1}(p,q)=\bigsqcup_{\sigma\in I_{n,p,q}} [\sigma]$ and $\sspace \setminus f_n^{-1}(p,q)=\bigsqcup_{\sigma\in J_{n,p,q}} [\sigma]$. Hence, $f_n^{-1}(p,q)$ is both c.e. open and effectively closed. The cases of $[0,p)$ and $(q,1]$ are similar. For $\epsilon>0$ and $n\geq 0$ one has:
\begin{align*}
& \prodm(\left|f_n-\prps\right|>\epsilon)  =  \prodm(\left|\sum_{i<n} (\ssvar(i)-\psvar)\right|>n\cdot \epsilon) = \prodm(\left|\sum_{i<n} (\ssvar(i)-\psvar)\right|^2>n^2\cdot \epsilon^2)\\ & \leq  \frac{1}{\epsilon^2\cdot n^2} \mathbb{E}_{\prodm}\big(\sum_{i<n} (g_i-\prps)\big)^2 =  \frac{1}{\epsilon^2\cdot n^2} \sum_{i<n} \mathbb{E}_{\prodm} (g_i-\prps)^2 = \frac{1}{\epsilon^2\cdot n} (\alpha-\beta)
\end{align*}

\end{proof}

\begin{prop}\label{prop:limrelfreqs}
For $(\psvar,\ssvar)$ in $\mathsf{SR}^{\prodm}$ one has $\lim_n f_n(\ssvar)= \psvar$.
\end{prop}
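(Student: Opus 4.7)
The plan is to follow the classical proof of the Strong Law of Large Numbers through a second-moment estimate along a subsequence, then interpret the summability condition as an $L_1(\prodm)$ Schnorr test. Write $\phi_n(\psvar,\ssvar) = (f_n(\ssvar)-\psvar)^2$. Since $f_n$ and the projection $\prps$ are computable continuous maps from $\prodspace$ to $\mathbb{R}$, so is $\phi_n$, uniformly in $n$. Expanding the square and using the moment identities stated just before Proposition~\ref{prop:fnthebasis}, one computes
\begin{equation*}
\|\phi_n\|_{L_1(\prodm)} = \frac{1}{n^2}\sum_{i<n}\mathbb{E}_{\prodm}(g_i-\prps)^2 = \frac{\alpha-\beta}{n},
\end{equation*}
since the cross terms vanish. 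This is a uniformly computable real.

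Next, take the subsequence $n_k = k^2$ and set $g := \sum_k \phi_{n_k}$. Each summand is a non-negative computable continuous function, so the partial sums are continuous and $g$ is lower semi-computable as their increasing pointwise limit. By monotone convergence, $\|g\|_{L_1(\prodm)} = (\alpha-\beta)\sum_k k^{-2}$, which is a computable real. Hence $g$ is an $L_1(\prodm)$ Schnorr test. By the characterization of Schnorr randomness recalled immediately after Definition~\ref{defn:random}, $g(\psvar,\ssvar)<\infty$ for every $(\psvar,\ssvar)$ in $\mathsf{SR}^{\prodm}$. In particular $\phi_{n_k}(\psvar,\ssvar)\to 0$, i.e. $f_{n_k}(\ssvar)\to \psvar$, along the subsequence $n_k = k^2$.

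Finally, one fills in the gaps between $n_k$ and $n_{k+1}$ by a purely deterministic bound. Since $\ssvar(i)\in\{0,1\}$, for $n_k \leq m < n_{k+1}$ one has
\begin{equation*}
|f_m(\ssvar)-f_{n_k}(\ssvar)| \leq \Big|\tfrac{n_k}{m}-1\Big||f_{n_k}(\ssvar)| + \tfrac{m-n_k}{m} \leq \tfrac{2(m-n_k)}{m} \leq \tfrac{2(2k+1)}{k^2},
\end{equation*}
which tends to zero in $k$. Combining this with subsequential convergence gives $\lim_n f_n(\ssvar)=\psvar$ for every $(\psvar,\ssvar)$ in $\mathsf{SR}^{\prodm}$, completing the proof.

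The only mild obstacle is verifying that $g$ genuinely qualifies as an $L_1(\prodm)$ Schnorr test, which reduces to the lower semi-computability of a sum of continuous functions and the computability of the series $\sum_k k^{-2}$; both are routine. Everything else is classical second-moment truncation transported into the effective setting via the $L_1(\prodm)$-test characterization of Schnorr randomness.
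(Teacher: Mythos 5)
Your proof is correct, and it shares the same skeleton as the paper's argument: a second-moment estimate along the subsequence $n_k=k^2$, an effective summability test built from those estimates, and a purely deterministic interpolation between consecutive squares (your gap bound $\frac{2(m-n_k)}{m}\leq \frac{2(2k+1)}{k^2}$ plays exactly the role of the paper's estimate $|\sum_{i<m}(\ssvar(i)-\psvar)|\leq m^{5/6}+2\sqrt{m}+1$). Where you genuinely diverge is in how the $L_1(\prodm)$ Schnorr test is constructed. The paper works with the events $U_n=\{|f_{n^2}\circ\prss-\prps|>\eta_n\}$, bounds $\prodm(U_n)$ by Chebyshev, and takes $\sum_n I_{U_n}$ as its test; for this to have a computable norm it must first invoke Proposition~\ref{prop:miyabeish} to select thresholds $\eta_n\in((n+1)^{-1/3},n^{-1/3})$ at which the events have computable $\prodm$-measure. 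You instead sum the squared deviations $\phi_{k^2}=(f_{k^2}\circ\prss-\prps)^2$ directly; since these are uniformly computable continuous and their $L_1(\prodm)$ norms are given in closed form as $(\alpha-\beta)/k^2$, the norm of the sum is computable outright, and the threshold-selection lemma is not needed at all. This buys a shorter and cleaner argument. What the paper's indicator-based version buys in exchange is an explicit eventual rate of convergence ($|f_m(\ssvar)-\psvar|\leq m^{-1/6}+2m^{-1/2}+m^{-1}$ for all sufficiently large $m$, depending on the random point), whereas finiteness of your $g$ yields $\phi_{k^2}(\psvar,\ssvar)\to 0$ without a rate; nothing in the statement of the proposition requires a rate, so this is not a defect.
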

\begin{proof}
Since $f_n, \prps$ are computable continuous, so is $\left|f_n-\prps\right|$. By Proposition~\ref{prop:miyabeish}, choose a computable sequence of reals $\eta_n$ for $n\geq 1$ such that $\eta_n$ is in the interval $((n+1)^{-\frac{1}{3}}, n^{-\frac{1}{3}})$ and such that the event $\left|f_{n^2}-\prps\right|>\eta_n$ has computable $\prodm$-measure. Then let $U_n$ be this event, and set $f=\sum_{n\geq 1} I_{U_n}$.  Then one has $\prodm(U_n)\leq \prodm\big(\left|f_{n^2}-\prps\right|>(n+1)^{-\frac{1}{3}}\big)\leq (n+1)^{\frac{2}{3}}\cdot n^{-2} \cdot (\alpha-\beta) \leq \sqrt[3]{4} \cdot n^{-\frac{4}{3}} \cdot (\alpha-\beta)$. Then one has $\mathbb{E}_{\prodm} f \leq  \sum_{n=1}^{\infty} \sqrt[3]{4} \cdot n^{-\frac{4}{3}} \cdot (\alpha-\beta)$, and since the latter is finite and computable so is $\mathbb{E}_{\prodm} f$ by the Comparison Test.  Hence $f$ is an $L_1(\prodm)$ Schnorr test. 

Suppose that $(\psvar,\ssvar)$ is in $\mathsf{SR}^{\prodm}$. Choose $n_0> 0$ such that for all $n\geq n_0$ one has $(\psvar,\ssvar)$ is not in $U_n$. Then for all $n\geq n_0$ we have $\left|\sum_{i<n^2} (\ssvar(i)-\psvar)\right|\leq n^{\frac{5}{3}}$. 

Let $m\geq n_0^2$. Choose least $n\geq n_0$ with $n^2<m\leq (n+1)^2$. Then one has 
\begin{equation*}
\left|\sum_{i<m} (\ssvar(i)-\psvar)\right|  \leq  \left|\sum_{i<n^2} (\ssvar(i)-\psvar)\right|+(n+1)^2-n^2  \leq n^{\frac{5}{3}}+2n+1 \leq m^{\frac{5}{6}}+2\sqrt{m}+1
\end{equation*}
Hence $\left|(\frac{1}{m} \sum_{i<m} \ssvar(i))-\psvar\right|=\left|\frac{1}{m} \sum_{i<m} (\ssvar(i)-\psvar)\right|\leq m^{-\frac{1}{6}}+2\cdot m^{-\frac{1}{2}}+m^{-1}$.
\end{proof}

\section{Proof of Theorem~\ref{thm:reversal}}\label{sec:reversal}

\begin{proof}
The direction from (\ref{thm:reversal:1}) to (\ref{thm:reversal:2}) follows directly from Theorem~\ref{thm:doob}.

For the converse direction, suppose (\ref{thm:reversal:2}). For reductio, suppose that $\psvar_0$ is in $\bigcap_n U_n$, where $U_n$ is a sequential Schnorr $\prior$-test. Without loss of generality $U_0=\pspace$ and $U_n$ is decreasing. Note that since $U_0=\pspace$, we have $\sum_n \prior(U_n\setminus U_{n+1})=1$ and hence $\sum_n \prodm((U_n\setminus U_{n+1})\times \sspace)=1$.

By the hypothesis of the Theorem, choose a triple $\sspace, \psvar\mapsto\pmap{\psvar}, f_n:\sspace\rightarrow \pspace$ satisfying (\ref{thm:reversal:2a})-(\ref{thm:reversal:2c}). We will construct another triple $\sspacealt, \psvar\mapsto\pmapalt{\psvar}, g_n:\sspacealt\rightarrow \pspace$ satisfying (\ref{thm:reversal:2a})-(\ref{thm:reversal:2c}) such that $\psvar_0$ is not computably consistent relative to $\pspace, \prior, \sspacealt, \psvar\mapsto\pmapalt{\psvar}$. 

Let $\sspacealt=[T^{\infty}]$, where $T^{\infty}=\{\emptyset\}\cup \{(m)^{\frown}\sigma: m\geq 0, \sigma\in T\}$, where $\emptyset$ denotes the root (the unique length zero string). That is, $T^{\infty}$ copies $T$ countably many times after the root $\emptyset$. Clearly $T^{\infty}$ is a computable subtree of $\mathbb{N}^{<\mathbb{N}}$ with no dead ends.

Define likelihood as follows, for $\tau$ in $T^{\infty}$:
\begin{equation}\label{eqn:pmapalt}
\pmapalte{\tau}{\psvar} = \begin{cases}
1      & \text{if $\tau$ is the root and $\psvar$ is not in $\bigcap_n U_n$}, \\
0      & \text{if $\tau$ is the root and $\psvar$ is in $\bigcap_n U_n$}, \\
\pmape{\sigma}{\psvar}\cdot I_{U_m\setminus U_{m+1}}(\psvar)      & \text{if $\tau=(m)^{\frown}\sigma$}.
\end{cases}
\end{equation}
The idea is: if $\psvar$ in $U_m\setminus U_{m+1}$, then $\pmapalt{\psvar}$ puts all the probability on the $m$-th copy of $T$ in $T^{\infty}$, where it copies the original likelihood; whereas if $\psvar$ is in the $\prior$-null set $\bigcap_n U_n$, then $\pmapalt{\psvar}$ is the zero element of $\mathcal{M}^+(\sspace)$. 

Regarding the effective properties of the likelihood function:
\begin{itemize}[leftmargin=*] 
\item The likelihood function $\psvar\mapsto \pmapalte{\tau}{\psvar}$ is $L_1(\prior)$ computable, uniformly in $\tau$ in $T^{\infty}$ by Proposition~\ref{prop:indicator}(\ref{prop:indicator:3}) and Proposition~\ref{prop:productsL1nu}. 
\item For $\prior$-a.s. many $\psvar$ in $\pspace$, one has that $\psvar$ is not in $\bigcap_n U_n$ and hence is in some $U_m\setminus U_{m+1}$ and thus is $\prior$-a.s. a probability measure on $\sspacealt$ since $\psvar\mapsto \pmap{\psvar}$ is $\prior$-a.s a probability measure on $\sspace$.
\end{itemize}

Suppose that, per Definition~\ref{defn:conventionversions},  $\pmape{\sigma}{\psvar}$ is understood pointwise as its version $\gmape{\sigma}{\psvar}-\hmape{\sigma}{\psvar}$, where these are $L_1(\prior)$ Schnorr tests, uniformly in $\sigma$ from $T$. We define 
a pair $\Gmape{\tau}{\psvar}, \Hmape{\tau}{\psvar}$ of $L_1(\prior)$ Schnorr tests, uniformly in $\tau$ in $T^{(\infty)}$, such that $\Gmape{\tau}{\psvar}-\Hmape{\tau}{ \psvar}$ is a version of $\pmapalte{\tau}{\psvar}$:
\begin{itemize}[leftmargin=*]
\item If $\tau$ is the root, define $\Gmape{\tau}{\psvar} = \sum_m I_{U_{m}}(\psvar)$ and $\Hmape{\tau}{\psvar} = \sum_m I_{U_{m+1}}(\psvar)$.
\item If $\tau = (m)^{\frown}\sigma$, define
$\Gmape{(m)^{\frown}\sigma}{\psvar}  =\gmape{\sigma}{\psvar}\cdot I_{U_m}(\psvar)+\hmape{\sigma}{\psvar}\cdot I_{U_{m+1}}(\psvar)$ and 
$\Hmape{(m)^{\frown}\sigma}{\psvar}  =\gmape{\sigma}{\psvar}\cdot I_{U_{m+1}}(\psvar)+\hmape{\sigma}{\psvar}\cdot I_{U_m}(\psvar)$.
\end{itemize}
Then $\Gmape{\tau}{\psvar}, \Hmape{\tau}{ \psvar}$ are finite for $\psvar$ in $\mathsf{SR}^{\prior}$, and on these points we have that $\Gmape{\tau}{\psvar}-\Hmape{\tau}{\psvar}$ agrees with (\ref{eqn:pmapalt}), when $\pmape{\sigma}{\psvar}$ is understood pointwise as its version $\gmape{\sigma}{\psvar}-\hmape{\sigma}{\psvar}$. When $\psvar$ is in $\bigcap_n U_n$, by our convention that $\infty-\infty=0$, we have that  $\Gmape{\tau}{\psvar}-\Hmape{\tau}{\psvar}=0$ for all $\tau$. Hence when $\psvar$ is in $\bigcap_n U_n$, this version of $\pmapalt{\psvar}$ is the zero element of $\mathcal{M}^+(\sspace)$, and hence not an element of $\mathcal{P}(\sspace)$.\footnote{If one wanted to make $\infty-\infty$ undefined, one would have to introduce notation for partial functions, and then emend the definition of computable consistency to require that the likelihood is defined and a probability measure on the sample space. Instead of ``when $\psvar$ is in $\bigcap_n U_n$, this version of $\pmapalt{\psvar}$ is the zero element of $\mathcal{M}^+(\sspace)$,'' at this point the proof would read ``when $\psvar$ is in $\bigcap_n U_n$, the likelihood $\pmapalt{\psvar}$ is not defined.''} By the first conjunct of Definition~\ref{defn:con:eff}(\ref{defn:con:eff:2}), any $\psvar$ in $\bigcap_n U_n$ is not computably consistent.

Define uniformly computable continuous $g_n:\sspacealt\rightarrow \pspace$ by $g_n((m)^{\frown} x) = f_n(\ssvar)$. It remains to show that $\sspacealt, \psvar\mapsto \pmapalt{\psvar}, g_n$ satisfies conditions~(\ref{thm:doob1})-(\ref{thm:doob2}) of Theorem~\ref{thm:doob}.

To this end, define uniformly computable continuous $\iota_m:\prodspace\rightarrow \pspace\times \sspacealt$ by $\iota_m(\psvar, \ssvar)=(\psvar, (m)^{\frown} \ssvar)$. Further, let $\prodmalt$ be the joint distribution on the product space $\pspace\times \sspacealt$. Then for each Borel event $B\subseteq \pspace\times \sspacealt$, we have
\begin{equation}\label{eqn:prodmalt}
\prodmalt(B) = \sum_m \prodm(\iota_m^{-1}(B)\cap \big((U_m\setminus U_{m+1})\times \sspace\big))
\end{equation}
This follows by a simple induction on $B$, which we omit for reasons of space.

\begin{proofdetail}

Here are the details.

Suppose that $B$ is open and hence a a countable union of elements from the $\prodmalt$-computable basis. Hence we can write $B=\bigcup_i (U_i\times [\sigma_i])$, where $U_i$ comes from a $\prior$-computable basis and $\sigma_i$ is in $T^{\infty}$; by adding more to the list $\sigma_i$ we may assume that $\left|\sigma_i\right|>1$. First we show that for all $m\geq 0$ and all $\psvar$ in $U_m\setminus U_{m+1}$ we have 
\begin{equation}\label{eqn:prodmalt2}
\int_{\sspacealt \cap [m]} I_B(\psvar,y) \; d\pmapalt{\psvar}(y)  = \int_{\sspace} I_{\iota_m^{-1}(B)}(\psvar, \ssvar) \; d\pmap{\psvar}(\ssvar)
\end{equation}
Define index set $I_m=\{i: \sigma_i(0)=m\}$ and for each $i$ in $I_m$, let $\sigma_i=(m)^{\frown} \tau_i$. Let $J_m =\{i\in I_m: \psvar\in U_i\}$ (which depends on $\psvar$, which is fixed for this part of the proof). Let $K_m=\{i\in J_m: \forall \; j\in J_m \; \tau_j\nprec \tau_i\}$, so that both $\{\tau_i: i\in K_m\}$ and $\{\sigma_i: i\in K_m\}$ are prefix-free and both $\bigsqcup_{i\in K_m} [\tau_i] = \bigcup_{i\in J_m} [\tau_i]$ and $\bigsqcup_{i\in K_m} [\sigma_i] = \bigcup_{i\in J_m} [\sigma_i]$. Then we have the  identity $\bigsqcup_{i\in K_m} [\tau_i]=(\iota_m^{-1}(B))^{\psvar}$ on $\sspace$.
 To see the identity we argue for both inclusions:
\begin{itemize}[leftmargin=*]
\item Suppose that $\ssvar$ is in $[\tau_i]$ for some $i$ in $K_m$. Then $\iota_m(\psvar, \ssvar)=(\psvar, (m)^{\frown}\ssvar)$ is in $U_i\times [\sigma_i] \subseteq B$ and so $(\psvar, \ssvar)$ is in $\iota_m^{-1}(B)$ and so $\ssvar$ is in $(\iota_m^{-1}(B))^{\psvar}$.
\item Suppose that $\ssvar$ is in $(\iota_m^{-1}(B))^{\psvar}$. Then $\iota_m(\psvar, \ssvar)$ in $B$. Then $(\psvar, (m)^{\frown}\ssvar)$ is in some $U_i\times [\sigma_i]$. Then $i$ is in $I_m$ and $i$ is in $J_m$. If $i$ is in $K_m$, then $\ssvar$ is in $[\tau_i]$ and we are done. If $i$ is not in $K_m$, then there is $j$ in $J_m$ with $\tau_j\prec \tau_i$; and by taking $j$ in $J_m$ such that $\tau_j$ is $\prec$-minimal with this property, we have that this $j$ is also in $K_m$. Since $\ssvar$ is in $[\tau_i]$, it is also in $[\tau_j]$, and we are done. 
\end{itemize}
This finishes the verification of $\bigsqcup_{i\in K_m} [\tau_i]=(\iota_m^{-1}(B))^{\psvar}$ on $\sspace$. Then we have the following, where the third identity comes from $\psvar$ being an element of $U_m\setminus U_{m+1}$:
\begin{align*}
& \int_{\sspace} I_{\iota_m^{-1}(B)}(\psvar, \ssvar) \; d\pmap{\psvar}(\ssvar) = \pmape{(\iota_m^{-1}(B))^{\psvar}}{\psvar} =\sum_{i\in K_m} \pmape{\tau_i}{\psvar} \\
& = \sum_{i\in K_m} \pmapalte{\sigma_i}{\psvar} =\int_{\sspacealt} I_{\bigcup_{i\in J_m} [\sigma_i]}(\ssvaralt) \; d\pmapalt{\psvar}(\ssvaralt)=\int_{\sspacealt \cap [m]} I_B(\psvar,y) \; d\pmapalt{\psvar}(y)
\end{align*}
Hence indeed (\ref{eqn:prodmalt2}) holds for all $m\geq 0$ and all $\psvar$ in $U_m\setminus U_{m+1}$. Then we have:
\begin{align*}
& \prodmalt(B) = \int_{\pspace} \int_{\sspacealt} I_B(\psvar,\ssvar) \; d\pmapalt{\psvar}(\ssvar) \; d\prior(\psvar) =\sum_m \int_{U_m\setminus U_{m+1}} \int_{\sspacealt\cap [m]} I_B(\psvar,\ssvaralt) \; d\pmapalt{\psvar}(\ssvaralt) \; d\prior(\psvar) \\
& = \sum_m  \int_{U_m\setminus U_{m+1}}  \int_{\sspace} I_{\iota_m^{-1}(B)}(\psvar, \ssvar) \; d\pmap{\psvar}(\ssvar) \; d\prior(\psvar) = \sum_m \prodm(\iota_m^{-1}(B)\cap \big((U_m\setminus U_{m+1})\times \sspace\big))
\end{align*}
This finishes the argument for (\ref{eqn:prodmalt}) in the case where $B$ is open.

Suppose that (\ref{eqn:prodmalt}) holds for $B$; then we show it holds for $(\pspace\times \sspacealt)\setminus B$:
\begin{align*}
& \prodmalt((\pspace\times \sspacealt)\setminus B) = 1-\prodmalt(B)\\
&= \sum_m \prodm((U_m\setminus U_{m+1})\times \sspace)-\sum_m \prodm(\iota_m^{-1}(B)\cap \big((U_m\setminus U_{m+1})\times \sspace\big)) \\
& = \sum_m \prodm( \big((U_m\setminus U_{m+1})\times \sspace\big)\setminus \iota_m^{-1}(B)) = \sum_m \prodm(\iota_m^{-1}((\pspace\times \sspacealt) \setminus B)\cap \big((U_m\setminus U_{m+1})\times \sspace\big))
\end{align*}

 Suppose that (\ref{eqn:prodmalt}) holds for each of $B_0, B_1, \ldots$ where these are pairwise disjoint; then we show it holds for $B=\bigcup_i B_i$. Since the $B_i$ are pairwise disjoint, so are the $\iota_m^{-1}(B_i)$. One has the following, where the first line displays the absolute convergence which suffices for the interchange of the summations on the second line:
\begin{align*}
& \prodmalt(B) = \sum_i \prodmalt(B_i) =\sum_i \sum_m \prodm(\iota_m^{-1}(B_i)\cap \big((U_m\setminus U_{m+1})\times \sspace\big))\\
& = \sum_m \sum_i \prodm(\iota_m^{-1}(B_i)\cap \big((U_m\setminus U_{m+1})\times \sspace\big))  = \sum_m \prodm(\bigsqcup_i \iota_m^{-1}(B_i)\cap \big((U_m\setminus U_{m+1})\times \sspace\big)) \\
& = \sum_m \prodm(\iota_m^{-1}(B)\cap \big((U_m\setminus U_{m+1})\times \sspace\big))
\end{align*}

 Suppose that (\ref{eqn:prodmalt}) holds for each of $B_0, B_1, \ldots$ where these are decreasing; then we show it holds for $B=\bigcap_i B_i$. Since the $B_i$ sequences is decreasing, so are the $\iota_m^{-1}(B_i)$. One has the following, where the exchange of limit and summation is justified by DCT relative to the counting measure:
\begin{align*}
& \prodmalt(B) =\lim_i \prodmalt(B_i)=\lim_i \sum_m \prodm(\iota_m^{-1}(B_i)\cap \big((U_m\setminus U_{m+1})\times \sspace\big)) \\
& =\sum_m\lim_i \prodm(\iota_m^{-1}(B_i)\cap \big((U_m\setminus U_{m+1})\times \sspace\big)) \\
& =\sum_m \prodm(\iota_m^{-1}(B)\cap \big((U_m\setminus U_{m+1})\times \sspace\big))
\end{align*}

This finishes the verification of (\ref{eqn:prodmalt}).
\end{proofdetail}

Finally, here is the verification that $\sspacealt, \psvar\mapsto \pmapalt{\psvar}, g_n$ satisfies conditions~(\ref{thm:doob1})-(\ref{thm:doob2}) of Theorem~\ref{thm:doob}:
\begin{itemize}[leftmargin=*]
\item Suppose that $A$ is a $\prodm$-measure one subset of $\mathsf{KR}^{\prior}\times \sspace$ on which $\lim_n f_n(\ssvar)$ exists and is equal to $\psvar$. Let $B=\bigsqcup_m \iota_m(A)$, so that $\iota_m^{-1}(B)=A$ for all $m\geq 0$ and so that $B\subseteq \mathsf{KR}^{\prior}\times \sspacealt$.
Then by (\ref{eqn:prodmalt}) one has
\begin{equation*}
 \prodmalt(B) = \sum_m \prodm(A\cap \big((U_m\setminus U_{m+1})\times \sspace\big))  = \sum_m \prodm((U_m\setminus U_{m+1})\times \sspace) =1 
\end{equation*}
Finally note that for all $(\psvar, (m)^{\frown} \ssvar)$ in $B$ one has that $(\psvar, \ssvar)$ is in $A$, and hence by condition~(\ref{thm:doob1}) of Theorem~\ref{thm:doob}, we have that the limit $\lim_n g_n((m)^{\frown} \ssvar))=\lim_n f_n(\ssvar)$ exists and is equal to $\psvar$.
\item Suppose that $U\subseteq \pspace$ is from a $\prior$-computable basis. Then, by condition~(\ref{thm:doob2}) of Theorem~\ref{thm:doob} we can write $f_n^{-1}(U)=\bigsqcup_i [\sigma_i]$ and $\sspace\setminus f_n^{-1}(U)=\bigsqcup_i [\tau_i]$ for computable sequences of $\tau_i, \sigma_i$ from $T$. Then one has $g_n^{-1}(U)=\bigsqcup_m \bigsqcup_i [(m)^{\frown} \sigma_i]$ and $\sspacealt\setminus g_n^{-1}(U)=\bigsqcup_m \bigsqcup_i [(m)^{\frown} \tau_i]$.
\end{itemize}
\end{proof}

\section{The infinite simplex, subspaces, and identifiability}\label{sec:simplex}

The infinite dimensional simplex $\mathbb{S}_{\infty}$ (Definition~\ref{defn:sinfty}) is a subspace of the Hilbert cube~$[0,1]^{\mathbb{N}}$. But since $\mathbb{S}_{\infty}$ is not closed in $[0,1]^{\mathbb{N}}$, some care has to be devoted to describing the computable Polish space structure on $\mathbb{S}_{\infty}$.

We define the natural effectivization of the subspace relation:
\begin{defn}\label{defn:subspace}
We say $Y_0$ is a \emph{computable Polish subspace} of $Y$ if
\begin{enumerate}[leftmargin=*]
    \item\label{defn:subspace:0} $Y,Y_0$ are computable Polish spaces and $Y_0\subseteq Y$
    \item\label{defn:subspace:1} There is a computable procedure which when given an index for a c.e. open $U$ in $Y$ returns an index for a c.e. open $V$ in $Y_0$ such that $V=U\cap Y_0$.
    \item\label{defn:subspace:2} There is a computable procedure which when given an index for a c.e. open $V$ in $Y_0$ returns an index for a c.e. open $U$ in $Y$ such that $V=U\cap Y_0$.
\end{enumerate}
\end{defn}

\begin{rmk}\label{rmk:howused}
To illustrate how this concept is used, note that when $Y_0$ is a computable Polish of $Y$, we have:
\begin{enumerate}[leftmargin=*]
    \item\label{rmk:howused:1} if $f:Y\rightarrow Z$ is a computable continuous function, then  $f\upharpoonright Y_0:Y_0\rightarrow Z$ is computable continuous.
    \item\label{rmk:howused:2} If $U_0, U_1, \ldots$ is a computable basis in $Y$, then $U_0\cap Y_0, U_1\cap Y_0, \ldots$ is a comptuable basis in $Y_0$.
\end{enumerate}
In the last point, a \emph{computable basis} in a computable Polish space is just a uniformly computable sequence of c.e. opens such that every c.e. open can be written as an effective union of sets from the sequence.
\end{rmk}

We build up to proving:
\begin{prop}\label{prop:sinfty}
$\mathbb{S}_{\infty}$ is a computable Polish subspace of $[0,1]^{\mathbb{N}}$. 
\end{prop}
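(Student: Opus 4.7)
The plan is to equip $\mathbb{S}_{\infty}$ with the $\ell^1$-metric $d_{TV}(\theta,\theta') = \sum_i |\theta(i) - \theta'(i)|$, and to take as countable dense set $D$ the finitely-supported non-negative rational sequences summing to $1$. Under $d_{TV}$, the simplex $\mathbb{S}_{\infty}$ is cut out of $\ell^1$ by the $\ell^1$-continuous closed conditions $\theta(i) \geq 0$ for each $i$ and $\sum_i \theta(i) = 1$, hence is complete; density of $D$ follows by truncate-and-normalize; and $d_{TV}$ is computable on $D$ since on two elements of $D$ only finitely many rational terms contribute. This gives $\mathbb{S}_{\infty}$ the structure of a computable Polish space, so Definition~\ref{defn:subspace}(\ref{defn:subspace:0}) is satisfied.

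For Definition~\ref{defn:subspace}(\ref{defn:subspace:1}), fix the Hilbert cube metric $d_H(x,y) = \sum_i 2^{-i}|x(i) - y(i)|$, for which $d_H \leq d_{TV}$ on $[0,1]^{\mathbb{N}}$. Given a basic open ball $B_H(x_0, r)$ with $x_0$ from the dense set of $[0,1]^{\mathbb{N}}$ and $r$ rational, I claim
\begin{equation*}
B_H(x_0, r) \cap \mathbb{S}_{\infty} \;=\; \bigcup_{(\eta, s) \in I} B_{TV}(\eta, s), \qquad I = \{(\eta,s) \in D \times \mathbb{Q}^{>0} : d_H(\eta, x_0) + s < r\}.
\end{equation*}
Here $I$ is c.e. uniformly in $(x_0, r)$. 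The inclusion $\supseteq$ uses $d_H \leq d_{TV}$ and the triangle inequality. For $\subseteq$, given $\theta \in B_H(x_0, r) \cap \mathbb{S}_{\infty}$ with slack $\delta = r - d_H(\theta, x_0) > 0$, I pick $\eta \in D$ with $d_{TV}(\theta, \eta) < \delta/3$ and set $s = \delta/2$. Since any c.e. open in $[0,1]^{\mathbb{N}}$ is an effective union of basic $d_H$-balls, this yields the required effective procedure.

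For Definition~\ref{defn:subspace}(\ref{defn:subspace:2}), consider a basic $d_{TV}$-ball $B_{TV}(\eta_0, r)$ with $\eta_0 \in D$ supported on $\{0, \ldots, N-1\}$. The key observation is that for every $\theta \in \mathbb{S}_{\infty}$,
\begin{equation*}
d_{TV}(\theta, \eta_0) \;=\; \sum_{i < N} |\theta(i) - \eta_0(i)| \,+\, 1 - \sum_{i < N} \theta(i),
\end{equation*}
an expression depending only on $(\theta(0),\ldots,\theta(N-1))$ and computable continuous in those coordinates. Hence
\begin{equation*}
U \;:=\; \Big\{x \in [0,1]^{\mathbb{N}} : \sum_{i < N} |x(i) - \eta_0(i)| + 1 - \sum_{i < N} x(i) < r\Big\}
\end{equation*}
is c.e. open in $[0,1]^{\mathbb{N}}$ uniformly in $(\eta_0, r)$, and $U \cap \mathbb{S}_{\infty} = B_{TV}(\eta_0, r)$.

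The main obstacle is conceptual rather than technical: because $\mathbb{S}_{\infty}$ is not $d_H$-closed in $[0,1]^{\mathbb{N}}$, the Hilbert-cube metric cannot directly supply completeness, so one must introduce $d_{TV}$ as a genuinely new metric. The fix rests on an effective version of Scheff\'e's theorem, namely that on $\mathbb{S}_{\infty}$ the product and $\ell^1$ topologies coincide; the concrete manifestation of this is the displayed formula above, which makes $d_{TV}(\cdot, \eta_0)$ visibly computable continuous on the Hilbert cube whenever $\eta_0$ is finitely supported, and it is precisely what lets both subspace conditions go through uniformly.
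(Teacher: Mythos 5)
Your proof is correct, but it takes a genuinely different route from the paper's. The paper first proves a general effectivization of the classical fact that $\Pi^0_2$ subspaces of Polish spaces are Polish (Proposition~\ref{prop:pi02eff}), using the standard Kechris-style metric $d_0(x,y)=d(x,y)+\sum_n \min(2^{-(n+1)}, |\tfrac{1}{d(x,C_n)}-\tfrac{1}{d(y,C_n)}|)$; it then writes $\mathbb{S}_{\infty}$ as an explicit effective intersection $\bigcap_n U_n\cap\bigcap_n V_n$ and uses computable compactness of the complements $C_n, D_n$ (via Proposition~\ref{prop:computemax}) to verify that the distance functions $y\mapsto d(y,C_n)$ are computable continuous, which is the hypothesis that makes $d_0$ computable on the dense set. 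You instead exploit the special structure of the simplex: the $\ell^1$ (total variation) metric is complete on $\mathbb{S}_{\infty}$, computable on the finitely supported rational dense set, and — this is your key identity — for finitely supported $\eta_0$ the function $d_{TV}(\cdot,\eta_0)$ restricted to $\mathbb{S}_{\infty}$ coincides with a computable continuous function of finitely many coordinates on all of $[0,1]^{\mathbb{N}}$, which is exactly the effective Scheff\'e phenomenon needed to push c.e. opens in both directions across Definition~\ref{defn:subspace}(\ref{defn:subspace:1})--(\ref{defn:subspace:2}). What each approach buys: the paper's argument is reusable for any effectively $\Pi^0_2$ subspace satisfying its hypotheses (and Proposition~\ref{prop:pi02eff} is stated as a standalone tool), at the cost of an opaque metric and a compactness detour; yours is more elementary, avoids compactness entirely, and produces a statistically natural metric, but is tailored to $\mathbb{S}_{\infty}$. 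Since conditions (\ref{defn:subspace:1})--(\ref{defn:subspace:2}) pin down the same effective topology on $\mathbb{S}_{\infty}$ either way, nothing downstream in the paper is affected by the choice. One trivial loose end: in your verification of condition (\ref{defn:subspace:1}) you set $s=\delta/2$ with $\delta$ a possibly irrational slack; you should instead search for a rational $s$ with, say, $\delta/3<s<\delta/2$, which changes nothing else in the estimate.
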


But the simplest example of Definition~\ref{defn:subspace} is:
\begin{ex}\label{rmk:closedcompsubspace}
If $Y_0$ is effectively closed in $Y$ and inherits the metric from $Y$ and has as a countable dense set a uniformly computable sequence of points from $Y$, then $Y_0$ is a computable Polish subspace of $Y$.
\end{ex}
However, $\mathbb{S}_{\infty}$ is $\Pi^0_2$ in $[0,1]^{\mathbb{N}}$, and so one needs an effective version of the classical result that $\Pi^0_2$ subspaces of Polish spaces are Polish (\cite[17]{Kechris1995-hr}):
\begin{prop}\label{prop:pi02eff}
Suppose that $U_n$ is a uniformly computable sequence of c.e. opens in computable Polish space $Y$, with $C_n=Y\setminus U_n$ the corresponding effectively closed sets. 

Let $d$ be the metric with respect to which $Y$ is a computable Polish space. Suppose that $d$ is bounded above by some rational constant $c>1$. Suppose further that the map $y\mapsto d(y, C_n):=\inf_{z\in C_n} d(y,z)$ is computable continuous, uniformly in $n\geq 0$.

Let $Y_0=\bigcap_n U_n$, and suppose that $Y_0$ contains a computable subset $D_0$ of the countable dense set $D$ of $Y$, such that $D_0$ is dense in $Y_0$.

Then $Y_0$ is a computable Polish subspace of $Y$ space with metric:
\begin{equation}\label{eqn:d0}
d_0(x,y)=d(x,y)+\sum_n \min\big(2^{-(n+1)}, \left|\frac{1}{d(x,C_n)}-\frac{1}{d(y,C_n)}\right|\big)
\end{equation}
\end{prop}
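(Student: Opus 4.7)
The plan is to verify the four parts of Definition~\ref{defn:subspace} by first equipping $Y_0$ with the structure of a computable Polish space with metric $d_0$ and dense set $D_0$, and then checking the effective correspondence between $d$-open and $d_0$-open sets.

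First I would check that $d_0$ is well-defined and a metric: for $x,y \in Y_0 = \bigcap_n U_n$ the reciprocals $1/d(x,C_n), 1/d(y,C_n)$ exist, and each summand is bounded by $2^{-(n+1)}$, so the series converges absolutely with tail $\leq 2^{-N}$ after $N$ terms; the metric axioms then transfer term-by-term from $|1/d(x,C_n)-1/d(y,C_n)|$. Next I would verify uniform computability of $d_0$ on $D_0 \times D_0$: for $x,y \in D_0$ each $d(x,C_n)$ is a positive computable real (by uniform computable continuity of $d(\cdot,C_n)$ and $x \in Y_0 \subseteq U_n$), hence so is its reciprocal, and truncating the sum at $N$ with $2^{-N} < \epsilon/2$ while computing each of the first $N$ summands to precision $\epsilon/(2N)$ yields an $\epsilon$-approximation. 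Density of $D_0$ in $(Y_0,d_0)$: given $z \in Y_0$ and $\epsilon>0$, choose $N$ with $2^{-N} < \epsilon/2$, then continuity of the finitely many $d(\cdot,C_n)$ for $n \leq N$ at $z$ (which has $d(z,C_n)>0$) lets us pick $y \in D_0$ within a $d$-distance small enough to control the first $N$ summands. For $d_0$-completeness of $Y_0$: a $d_0$-Cauchy sequence $(x_k)$ is also $d$-Cauchy (since $d_0 \geq d$) and so has a $d$-limit $x \in Y$; if $x \notin Y_0$ then $x \in C_n$ for some $n$, so $d(x_k,C_n) \to 0$ and $1/d(x_k,C_n) \to \infty$, contradicting the fact that the $d_0$-Cauchy condition on the $n$-th term (once below the cap $2^{-(n+1)}$) forces $(1/d(x_k,C_n))_k$ to be Cauchy in $\mathbb{R}$ and hence bounded.

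For the effective subspace conditions, condition~(\ref{defn:subspace:1}) follows from the identity $B_d(x,r) \cap Y_0 = \bigcup \{B_{d_0}(y,r') : y \in D_0,\ r' \in \mathbb{Q}^+,\ d(x,y)+r' < r\}$, where $\supseteq$ uses $d \leq d_0$ on $Y_0$ and $\subseteq$ uses density of $D_0$ in $(Y_0,d_0)$; the indexing set is c.e.\ in the index of $x$ and rational $r$, so $U \cap Y_0$ is effectively c.e.\ open in $Y_0$. For condition~(\ref{defn:subspace:2}) I would realize each basic $d_0$-ball as $B_{d_0}(y,r) = U \cap Y_0$ with $U = \bigcup_N U_N^{\#}$, where
\[
U_N^{\#} := \Bigl(\bigcap_{n \leq N} U_n\Bigr) \cap \Bigl\{z : d(y,z) + \sum_{n \leq N} \min\bigl(2^{-(n+1)},\, \bigl|\tfrac{1}{d(y,C_n)} - \tfrac{1}{d(z,C_n)}\bigr|\bigr) + 2^{-N} < r\Bigr\};
\]
the $\supseteq$ direction uses the tail bound $2^{-N}$, and the $\subseteq$ direction uses the slack $r - d_0(y,z) > 0$ to pick $N$ large. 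To enumerate each $U_N^{\#}$ as a c.e.\ open set in $Y$ uniformly in $N$, I would further stratify $U_n = \bigcup_{q \in \mathbb{Q}^+} \{z : d(z,C_n) > q\}$, so that on each stratum $\bigcap_{n \leq N} \{d(\cdot,C_n) > q_n\}$ the reciprocal $1/d(z,C_n)$ is a computable continuous function of $z$, making the whole sublevel condition c.e.\ open in $z$.

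The main obstacle is this last step: realising $d_0$-balls as $Y$-c.e.\ open sets intersected with $Y_0$ in the face of the fact that $z \mapsto 1/d(z,C_n)$ blows up along $C_n$ and so the defining sum of $d_0$ fails to be globally upper semicomputable on $Y$. The stratification trick localises the computability of each summand, while the truncation-plus-tail-bound identity links the truncated partial sums to the full $d_0$. Once this mechanism is in place, the effective basis claims of Definition~\ref{defn:subspace}(\ref{defn:subspace:1})–(\ref{defn:subspace:2}) reduce to routine enumerations of c.e.\ unions of c.e.\ open conditions.
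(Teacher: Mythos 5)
Your proposal is correct, and its overall skeleton matches the paper's: equip $(Y_0,d_0,D_0)$ with a computable Polish structure (computability of $d_0(i,j)$ for $i,j$ in $D_0$ via positive rational lower bounds on $d(i,C_n)$ plus truncation of the series), then verify the two effective clauses of Definition~\ref{defn:subspace}; clause~(\ref{defn:subspace:1}) is handled by essentially the same covering identity in both proofs. Where you genuinely diverge is clause~(\ref{defn:subspace:2}), which is the substantive step. The paper covers a basic ball $B_{d_0}(j,\epsilon)$ by small pieces centred at \emph{other} dense points $i$, each piece of the form $B_d(i,\frac{q}{3})\cap\bigcap_{n\le n_q}U_{i,q,n}$, where $U_{i,q,n}$ pins $d(\cdot,C_n)$ strictly between two positive computable constants manufactured from the computable real $d(i,C_n)$, and the index set is cut down by the triangle-inequality margin $d_0(i,j)+\frac{q}{3}<\epsilon$; this never requires $z\mapsto 1/d(z,C_n)$ to be computable as a function, since $d(\cdot,C_n)$ is only ever compared against computable constants. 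You instead keep the ball centred at its own centre $y\in D_0$ and write it as an increasing union over truncation levels $N$ of sublevel sets of the truncated sum, recovering computability of the reciprocals by stratifying over rational lower bounds $d(\cdot,C_n)>q_n$. Both devices neutralize the same obstacle (the blow-up of $1/d(\cdot,C_n)$ along $C_n$); yours is somewhat more direct in avoiding the covering-by-neighbouring-dense-points step, at the cost of the extra union over tuples $(q_1,\dots,q_N)$ and of invoking the (standard, but worth stating) fact that strict sublevel sets of a function that is computable continuous on a c.e.\ open stratum are uniformly c.e.\ open there. Your direct verifications of completeness and density of $D_0$ in $(Y_0,d_0)$ replace the paper's citation of the classical result, and are sound.
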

\begin{proof}

We identify the countable dense set $D$ of $Y$ with natural numbers and use variables $i,j$ for them in this proof.

Since classically $d_0$ provides a complete compatible metric for $Y_0$ (cf. \cite[17]{Kechris1995-hr}), to show that $Y_0$ is a computable Polish space it remains to show that if $i,j$ come from $D_0$, then $d_0(i,j)$ is uniformly computable. To see this, note that since $i,j$ are computable points of $Y\subseteq U_n$, we have that $d(i,C_n), d(j,C_n)>0$ are uniformly computable. We can search for, and find, a rational $q_n(i,j)$ with $d(i,C_n), d(j,C_n)>q_n(i,j)>0$. Using this, we can compute $\min\big(2^{-(n+1)}, \left|\frac{1}{d(i,C_n)}-\frac{1}{d(j,C_n)}\right|\big)$. Then by the Comparison Test, $d_0(i,j)$ is computable, and uniformly so. Hence indeed $Y_0$ is a computable Polish space. 

Now we show that $Y_0$ is a computable Polish subspace of $Y$.

We focus on condition~(\ref{defn:subspace:2}) of Definition~\ref{defn:subspace} since the other one is similar but easier. It suffices to consider the case where $V$ is an open ball $B_{d_0}(j,\epsilon):=\{y_0\in Y_0: d_0(j,y_0)<\epsilon\}$ in $Y_0$, where $j$ in $D_0$ and $\epsilon>0$ is rational; we must show that $B_{d_0}(j, \epsilon)=U\cap Y_0$ for some c.e. open $U$ in $Y$. We proceed as follows:
\begin{itemize}[leftmargin=*]
\item For each rational $q$ in $(0,\frac{1}{c})$ compute $n_q\geq 0$ such that $2^{-(n_q+1)}<\frac{q}{3}$.
\item For each $(i,q)$ in $D_0\times (\mathbb{Q}\cap (0,\frac{1}{c}))$, compute real numbers 
\begin{equation*}
\delta_{i,q,n}:=\frac{1}{d(i,C_n)}+ \frac{q}{3}\cdot 2^{-(n+1)}, \hspace{10mm} \eta_{i,q,n}:=\frac{1}{d(i,C_n)}- \frac{q}{3}\cdot 2^{-(n+1)}
\end{equation*}
Since the metric $d$ is bounded $<c$ and since $q$ is from $(0,\frac{1}{c})$, note that we have that $\eta_{i,q,n}>0$; and trivially $\delta_{i,q,n}>0$.

\begin{proofdetail}
In more detail:

Since $d$ is bounded $<c$, we have $d(i,C_n)<c$, and thus $0<q<\frac{1}{c}<\frac{1}{d(i,C_n)}$ and thus $0<\frac{q}{3}\cdot 2^{-(n+1)}<q<\frac{1}{d(i,C_n)}$ and thus $\eta_{i,q,n}>0$.
\end{proofdetail}

\item Define the c.e. index set $I=\{(i,q)\in D_0\times (\mathbb{Q}\cap (0,\frac{1}{c})): d_0(i,j)+\frac{q}{3}<\epsilon\}$.
\item For each $(i,q)$ in $I$ let $U_{i,q,n}$ be the c.e. open 
\begin{equation}
U_{i,q,n}=(d(\cdot, C_n))^{-1}(-\infty,\frac{1}{\eta_{i,q,n}})\cap (d(\cdot, C_n))^{-1}(\frac{1}{\delta_{i,q,n}},\infty)
\end{equation}
\end{itemize}
Then we claim that $B_{d_0}(j,\epsilon)=Y_0\cap \bigcup_{(i,q)\in I} B_d(i,\frac{q}{3})\cap \bigcap_{n\leq n(q)} U_{i,q,n}$:
\begin{itemize}[leftmargin=*]
\item First suppose that $x$ in $Y_0\cap B_d(i,\frac{q}{3})\cap \bigcap_{n\leq n(q)} U_{i,q,n}$ for some $(i,q)$ in $I$. Then for $n\leq n(q)$, membership in $U_{i,q,n}$ implies that $\left|\frac{1}{d(i,C_n)}-\frac{1}{d(x,C_n)}\right|<\frac{q}{3}\cdot 2^{-(n+1)}$. Together with membership in $ B_d(i,\frac{q}{3})$ and the definition of $n_q$ we then have that $d_0(x,i)<q$. Then $d_0(x,j)\leq d_0(x,i)+d_0(i,j)<q+d_0(i,j)<\epsilon$.
\item Second suppose that $x$ is in $B_{d_0}(j,\epsilon)$. Choose rational $0<q<\epsilon-d_0(x,j)$. Since for each $n\leq n_q$, the function $y\mapsto \frac{1}{d(y,C_n)}$ is continuous on $U_n$ and $x$ is in $U_n$, we can find $i$ in $D_0$ sufficiently close to $x$ such that $x$ is in $B_d(i,\frac{q}{3})$ and $x$ is in $U_{i,q,n}$ for all $n\leq n_q$; and hence $d_0(i,x)<\frac{2}{3} q$. Further $(i,q)$ is in $I$: for we have $d_0(i,j)+\frac{q}{3}\leq d_0(i,x)+d_0(x,j)+\frac{q}{3}<q+d_0(x,j)<\epsilon$.
\end{itemize}

\begin{proofdetail}

To show condition~(\ref{defn:subspace:1}) of Definition~\ref{defn:subspace}, it suffices to consider the case where $U$ is a open ball $B_d(j,\epsilon)$ in $Y$, where $\epsilon>0$ is rational; we must show that $B_d(j, \epsilon)\cap Y_0$ is c.e. open in $Y_0$. Define the c.e. index set $I=\{(i,q)\in D_0\times \mathbb{Q}^{>0}: d(i,j)+q<\epsilon\}$. Then we claim that $B_d(j,\epsilon)\cap Y_0=\bigcup_{(i,q)\in I} B_{d_0}(i,q)$:
\begin{itemize}[leftmargin=*]
    \item First suppose that $x$ is in $B_{d_0}(i,q)$, where $(i,q)$ is in $I$. Then $x$ is in $B_d(i,q)$ as well since $d\leq d_0$ everywhere. Then $d(x,j)\leq d(x,i)+d(i,j)<q+d(i,j)<\epsilon$.
    \item Second suppose that $x$ is in $B_d(j,\epsilon)\cap Y_0$. Choose rational $0<q<\frac{1}{2}(\epsilon-d(x,j))$. Choose a point $i$ in $D_0$ in the open ball $B_{d_0}(x,q)$. Then $x$ is in the open ball $B_{d_0}(i,q)$ and hence in the open ball $B_d(i,q)$ as well since $d\leq d_0$ everywhere. Further $(i,q)$ is in $I$: for we have $d(i,j)+q\leq d(i,x)+d(x,j)+q<2q+d(x,j)<\epsilon$.
\end{itemize}

\end{proofdetail}

\end{proof}

In order to apply this proposition to $\mathbb{S}_{\infty}$ and $[0,1]^{\mathbb{N}}$, we use:\footnote{This definition is implied by that given in \cite[Definition 11.2.10(2)]{Brattka2021-mv}.}
\begin{defn}
(Computably Compact)

 A computable Polish space $Y$ is \emph{computably compact} if there is a partial computable function which when given an index for a computable sequence of c.e. opens $U_0, U_1, \ldots$, if the sequence covers $Y$ then the computable function halts and returns $n$ such that $U_0, \ldots, U_n$ covers $Y$.
\end{defn}

The finite product $[0,1]^n$ is computably compact, using a basis of rectangles with rational-valued corners, and this extends to $[0,1]^{\mathbb{N}}$. Likewise, Cantor space $2^{\mathbb{N}}$ is computably compact since one can tell when a finite set of basic clopens covers the space. An important consequence of computable compactness is the following:\footnote{See \cite[Theorem 3.14]{Porter2017-il} and references therein.}
\begin{prop}\label{prop:computemax}
If $Y$ is computably compact and $f:Y\rightarrow \mathbb{R}$ is computable continuous, then both $\sup_{y\in Y} f(y)$ and $\inf_{y\in Y} f(y)$ are computable real numbers.
\end{prop}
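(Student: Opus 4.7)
The plan is to show that $\sup_{y \in Y} f(y)$ is both left-c.e.\ and right-c.e., hence a computable real, and then obtain $\inf f$ by applying the same argument to $-f$ (which is again computable continuous, since additive inversion on $\mathbb{R}$ is computable continuous).

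For the left-c.e.\ direction, I would use the countable dense set of computable points $\{y_n\}_{n \geq 0}$ in $Y$. Since $f$ is computable continuous, $f(y_n)$ is a uniformly computable real in $n$ (unpacking: each of $f(y_n) > q$ and $f(y_n) < q$ is c.e.\ in $n$ and rational $q$, from the c.e.\ open indices of $f^{-1}(q,\infty)$ and $f^{-1}(-\infty,q)$). By density of $\{y_n\}$ in $Y$ and continuity of $f$, one has $\sup_{y \in Y} f(y) = \sup_n f(y_n)$; hence ``$\sup f > q$'' is equivalent to ``$\exists n\; f(y_n) > q$'', a c.e.\ predicate in rational $q$. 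Note this direction uses only that $Y$ is a computable Polish space, not computable compactness.

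For the right-c.e.\ direction, I would use computable compactness. Given rational $q$, form the uniformly c.e.\ open $V_q := f^{-1}(-\infty, q)$. Feed the partial computable function from the definition of computable compactness the constant-plus-padding sequence $U_0 := V_q$ and $U_k := \emptyset$ for $k \geq 1$; the procedure halts iff this sequence covers $Y$, iff $V_q = Y$, iff $f(y) < q$ for every $y \in Y$. Since $Y$ is classically compact and $f$ is continuous, the supremum is attained on $Y$, so ``$f < q$ pointwise on $Y$'' is equivalent to ``$\sup f < q$''. Therefore $\{q \in \mathbb{Q}: \sup f < q\}$ is c.e.\ in $q$, which is exactly right-c.e.\ of $\sup f$. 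Combined with the previous paragraph, $\sup f$ is computable. This proof is fairly direct; the main thing to keep track of is simply applying the computable-compactness oracle uniformly in $q$ to the padded sequence, and the attainment-of-supremum observation that converts the pointwise statement into the strict inequality on the supremum.
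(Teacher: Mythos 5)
The paper does not actually prove this proposition --- it defers to the literature (the footnote cites Porter et al.\ and references therein) --- so there is no in-paper argument to compare against. Your proof is the standard one and its architecture is sound: $\sup_Y f=\sup_n f(y_n)$ over the countable dense set gives left-c.e.-ness using only the computable Polish structure, and right-c.e.-ness reduces, via attainment of the supremum on a compact space, to semi-deciding whether the c.e.\ open $V_q=f^{-1}(-\infty,q)$ covers $Y$; the reduction of $\inf f$ to $\sup(-f)$ is fine.

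The one step to be careful about is the biconditional ``the procedure halts iff this sequence covers $Y$.'' The paper's definition of computable compactness is a one-directional promise: it guarantees that the oracle halts (with a correct index) \emph{when} the input sequence covers $Y$, but it says nothing about inputs that fail to cover, so the oracle may well halt on $(V_q,\emptyset,\emptyset,\dots)$ even when $V_q\neq Y$. Halting alone therefore does not certify $\sup f<q$; it only shows that $\{q:\sup f<q\}$ is contained in a c.e.\ set, which is not enough for right-c.e.-ness. The repair is to work with the stronger, standard formulation that the paper's footnote points to (e.g.\ that the collection of finite basic-ball covers of $Y$ is c.e., as in Brattka--Hertling's definition, which the paper's definition is said to be implied by): with that, ``$V_q$ covers $Y$'' is genuinely semi-decidable --- enumerate the basic balls constituting $V_q$ and search the c.e.\ set of finite covers for a subfamily drawn from them, using classical compactness for the converse direction --- and the rest of your argument goes through verbatim. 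Since every space to which the paper applies the proposition ($[0,1]^{\mathbb N}$, $2^{\mathbb N}$, and effectively closed subsets thereof) satisfies the stronger property, this is a defect in how you invoke the stated definition rather than in the mathematical idea, but as written the ``halts implies covers'' direction is unjustified.
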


We now prove Proposition~\ref{prop:sinfty}:
\begin{proof}
The Hilbert cube $[0,1]^{\mathbb{N}}$ with its metric $d(x,y)=\sum_n 2^{-(n+1)}  \left|x(n)-y(n)\right|$ is a computable Polish space with the dense set being the points $\psvar$ in $[0,1]^{\mathbb{N}}$ which are zero after some point. One has that $\mathbb{S}_{\infty}=\bigcap_n U_n\cap \bigcap_n V_n$, where $U_n,V_n$ are the below c.e. opens with respective relative complements $C_n,D_n$:
\begin{align*}
& U_n = \{x\in [0,1]^{\mathbb{N}}: \sum_i x(i)>1-\frac{1}{n+1}\} & C_n = \{x\in [0,1]^{\mathbb{N}}: \sum_i x(i)\leq 1-\frac{1}{n+1}\} \\
& V_n = \{x\in [0,1]^{\mathbb{N}}: \sum_{i\leq n} x(i)<1+\frac{1}{n+1}\} & D_n = \{x\in [0,1]^{\mathbb{N}}: \sum_{i\leq n} x(i)\geq 1+\frac{1}{n+1}\}
\end{align*}
Then both $C_n, D_n$ are computable Polish spaces which are computably compact:
\begin{itemize}[leftmargin=*]
\item For $C_n$: it is effectively closed in $[0,1]^{\mathbb{N}}$ since it is $f^{-1}[0, 1-\frac{1}{n+1}]$, where $f:[0,1]^{\mathbb{N}}\rightarrow [0,\infty]$ is the lsc function $f(x)=\sum_i x(i)$. Further, the intersection of the countable dense set of $[0,1]^{\mathbb{N}}$ with $C_n$ is computable and dense in $C_n$. Finally, the computable compactness of $C_n$ follows from that $[0,1]^{\mathbb{N}}$, since $C_n$ is effectively closed in $[0,1]^{\mathbb{N}}$.
\item For $D_n$: it is effectively closed in $[0,1]^{\mathbb{N}}$ since it is $g_n^{-1}[1+\frac{1}{n+1},\infty)$, where $g:[0,1]^{\mathbb{N}}\rightarrow [0,\infty)$ is the computable continuous function $g_n(x)=\sum_{i\leq n} x(i)$. The other parts follows just as for $C_n$.
\end{itemize}

For each element $x$ of the countable dense set of $[0,1]^{\mathbb{N}}$ and each $n\geq 0$, one can consider the computable continuous map $f_{x,n}:C_n\rightarrow [0,\infty)$ given by $f_{x,n}(y)=d(x,y)$. By Proposition~\ref{prop:computemax}, one has that $\inf_{y\in C_n} d(x,y)=d(x,C_n)$ is a computable real, uniformly in a point $x$ of the countable dense set and in $n\geq 0$. Since $y\mapsto d(y,C_n)$ is 1-Lipschitz, one then has that $y\mapsto d(y,C_n)$ is computable continuous, uniformly in $n\geq 0$ (cf. \cite[Proposition 2.5]{Huttegger2024-zb}). The same is true of $y\mapsto d(y,D_n)$, and hence the hypotheses of Proposition~\ref{prop:pi02eff} are satisfied.
\end{proof}

From this it follows:
\begin{prop}\label{prop:sinftyrojection}
For each $i\geq 0$, the projection maps $\pi_i:\mathbb{S}_{\infty}\rightarrow [0,1]$ given by $\pi_i(\psvar)=\psvar(i)$ are computable continuous and computable open.
\end{prop}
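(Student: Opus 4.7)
The plan is to handle computable continuity and computable openness separately, in both cases leveraging the computable Polish subspace structure of $\mathbb{S}_{\infty}\subseteq[0,1]^{\mathbb{N}}$ just established in Proposition~\ref{prop:sinfty}.

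For computable continuity, I would first note that the ambient projection $\pi_i:[0,1]^{\mathbb{N}}\to[0,1]$ is computable continuous, since the preimage of a rational open interval $(a,b)$ is the cylinder $\{x:a<x(i)<b\}$, which is uniformly c.e.\ open in the Hilbert cube. An application of Remark~\ref{rmk:howused}(\ref{rmk:howused:1}) to the inclusion $\mathbb{S}_{\infty}\hookrightarrow[0,1]^{\mathbb{N}}$ then delivers computable continuity of the restriction $\pi_i\upharpoonright\mathbb{S}_{\infty}$, uniformly in $i\geq 0$.

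For computable openness, the plan is to reduce to computing $\pi_i$ on a convenient basis. By Remark~\ref{rmk:howused}(\ref{rmk:howused:2}) together with the standard observation that the rectangular cylinders $V=\prod_{j\leq n}(a_j,b_j)\times\prod_{j>n}[0,1]$ with rational $a_j<b_j$ form a computable basis of $[0,1]^{\mathbb{N}}$, it suffices to exhibit a uniform algorithm that, from $i$, $n$, and the rationals $a_j,b_j$, outputs an index for $\pi_i(V\cap\mathbb{S}_{\infty})$ as a c.e.\ open subset of $[0,1]$. A brief case analysis on whether $i\leq n$ or $i>n$ shows that, in each case, $\pi_i(V\cap\mathbb{S}_{\infty})$ is a rational-endpoint interval whose endpoints are explicit arithmetic combinations of $a_i$, $b_i$ and the partial sum $\sum_{j\leq n,\,j\neq i}a_j$ (or $\sum_{j\leq n}a_j$ when $i>n$): specifically $(a_i,\min(b_i,1-\sum_{j\leq n,\,j\neq i}a_j))$ in the first case and $[0,1-\sum_{j\leq n}a_j)\cap[0,1]$ in the second, understood as empty when the interval collapses.

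The main obstacle is this last step: carefully verifying that the strict inequalities $\psvar(j)>a_j$ together with the simplex constraint $\sum_k\psvar(k)=1$ yield exactly the claimed rational-endpoint interval. Specifically, one has to check that any non-negative ``slack'' $1-t-\sum_{j\leq n,\,j\neq i}\psvar(j)$ can always be distributed across the unconstrained tail coordinates $\psvar(k)$ for $k>n$ with each $\psvar(k)\in[0,1]$ (splitting if the slack exceeds $1$), and that the strictness of the bounds on $\psvar(j)$ is precisely what turns a candidate non-strict endpoint into the strict one. Once this geometric check is in hand, extracting the computable open description is routine, and openness of $\pi_i$ follows uniformly in $i$.
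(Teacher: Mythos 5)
Your continuity argument is exactly the paper's: the ambient projection $[0,1]^{\mathbb{N}}\rightarrow[0,1]$ is computable continuous and Remark~\ref{rmk:howused}(\ref{rmk:howused:1}) restricts it to the subspace. The gap is in the openness half, at precisely the step you yourself flag as the main obstacle: the explicit interval formula for $\pi_i(V\cap\mathbb{S}_{\infty})$ is incorrect.

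First, for the cylinders $\prod_{j\leq n}(a_j,b_j)\times\prod_{j>n}[0,1]$ to form a basis of $[0,1]^{\mathbb{N}}$ you must allow $a_j<0$ and $b_j>1$ (otherwise no basic set contains a point with a zero coordinate among the first $n+1$, so these sets cannot exhaust every c.e.\ open). But once $a_j<0$ is permitted, the infimum of $\sum_{j\leq n,\,j\neq i}\theta(j)$ subject to $\theta(j)\in(a_j,b_j)\cap[0,1]$ is $\sum_{j\leq n,\,j\neq i}\max(a_j,0)$, not $\sum_{j\leq n,\,j\neq i}a_j$. Concretely, take $n=2$, $i=0$, $(a_0,b_0)=(-1,2)$, $(a_1,b_1)=(-1,\tfrac{1}{2})$, $(a_2,b_2)=(\tfrac{3}{10},\tfrac{6}{10})$. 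Then $\pi_0(V\cap\mathbb{S}_{\infty})=[0,\tfrac{7}{10})$, since $\theta(2)>\tfrac{3}{10}$ forces $\theta(0)<\tfrac{7}{10}$, and every $t<\tfrac{7}{10}$ is realized by putting the slack on $\theta(3)$; your formula gives $\bigl(-1,\min(2,\,1-(-1+\tfrac{3}{10}))\bigr)\cap[0,1]=[0,1]$.

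Second, even after replacing $a_j$ by $\max(a_j,0)$, whether the right endpoint $1-\sum_{j}\max(a_j,0)$ belongs to the image depends on whether that infimum is attained, i.e.\ on which factors actually contain their left endpoint $0$, and similarly on whether the $i$-th factor contains $1$. This is exactly the case analysis the paper performs: its Case 2 checks whether $1\in V_i$ and whether every other $V_j$ is of the form $[0,q_j)$ or $[0,1]$. The paper also shows how to sidestep the explicit formula entirely: in the main case it sets $U=\{\theta\in\prod_{j<n}V_j:\sum_{j<n}\theta(j)<1\}$, proves $\pi_i(V)=\pi_i(U)$ by a small perturbation argument, and then uses only that $U$ is c.e.\ open in the finite product $[0,1]^n$, whose coordinate projections are automatically computable open. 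Either repair (the full case analysis on endpoints, or the reduction to a finite-dimensional projection) is needed; the formula as you state it cannot be used.
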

In this computable open means: images of c.e. opens are uniformly c.e. open.
\begin{proof}
Computable continuity follows automatically from Remark~\ref{rmk:howused}(\ref{rmk:howused:1}). For computable open, suppose that $V\subseteq \mathbb{S}_{\infty}$ is c.e. open; we want to show that its image $\pi_i(V)$ is c.e. open in $[0,1]$, uniformly in an index for $V$. Since images commute with unions, by Remark~\ref{rmk:howused}(\ref{rmk:howused:2}), it suffices to show it for a c.e. open of the form $V=\{\psvar\in \mathbb{S}_{\infty}: \psvar\in\prod_{j<n} V_j\times \prod_{j\geq n} [0,1]\}$ where $V_j$ is $(p_j,q_j)$ or $[0,q_j)$ or $(p_j,1]$ or $[0,1]$ for rational $0<p_j<q_j<1$. There are then three cases.

Case 1: $i<n$ and $1\notin V_i$. Then define $U=\{\psvar\in \prod_{j<n} V_j: \sum_{j<n} \psvar(j)< 1\}$; and note $\pi_i(V)=\pi_i(U)$.
\begin{proofdetail}
\begin{itemize}[leftmargin=*]
    \item Suppose that $r$ in $\pi_i(U)$. Then $r=\psvar(i)$ where $\psvar$ in $\prod_{j<n} V_j$ and satisfies $\sum_{j<n} \psvar(j)< 1$. Extend $\psvar$ to an element $\overline{\psvar}$ in $V$ by setting $\overline{\psvar}(n)=1-\sum_{j<n} \psvar(j)$ and $\overline{\psvar}(j)=0$ for $j>n$. Then $\overline{\psvar}$ in $V$ and so $r$ in $\pi_i(V)$.
    \item Suppose $r$ in $\pi_i(V)$. Then $r=\psvar(i)$ where $\psvar$ in $V$. If $\sum_{j<n} \psvar(j)<1$, then we are done. Suppose not. Then $\sum_{j<n} \psvar(j)=1$. Then since $1\notin V_i$ we have $n>1$. Then define $\psvar^{\prime}$ by changing $\psvar$ by decreasing some digit besides $i$ just a little while keeping in the open box $\prod_{j<n} V_j\times \prod_{j\geq n} [0,1]$; and then by restricting to the first $n$ bits. Then $\psvar^{\prime}$ in $U$ and so $r$ in $\pi_i(U)$.
\end{itemize}
\end{proofdetail}
Since $U$ is c.e. open in $[0,1]^n$,  $\pi_i(U)$ is c.e. open in $[0,1]$.

Case 2: $i<n$ and $1\in V_i$. Then we first check whether for all $j<n$ with $j\neq i$ we have $V_j=[0,q_j)$ for a rational $0<q_j<1$ or $V_j=[0,1]$. If no, $V$ is the same as what one gets by changing $V_i$ to $V_i\setminus \{1\}$, and we are done by the previous paragraph. If yes, then we have $\pi_i(V)=V_i$.

\begin{proofdetail}
\begin{itemize}[leftmargin=*]
    \item In the one direction, we always have $\pi_i(V)\subseteq V_i$, since every point in $V$ has $i$-th coordinate in $V_i$.
    \item In the other direction, consider an arbitrary point $r$ in $V_i$, and define $\psvar(i)=r$ and $\psvar(n)=1-r$ and $\psvar(j)=0$ for all other coordinates $j$. Then $\psvar$ in $V$ and $\pi_i(\psvar)=r$.
\end{itemize}    
\end{proofdetail}

Case 3: $i\geq n$. This follows Case 1-2 by setting $V_j=[0,1]$ for all $j\in [n,i]$.
\end{proof}

We show how to extend Proposition~\ref{prop:para} to $\mathbb{S}_{\infty}$:
\begin{prop}\label{prop:sinftyprob}
Suppose that the parameter space $\pspace$ is $\mathbb{S}_{\infty}$ and that the sample space $\sspace$ is Baire space $\mathbb{N}^{\mathbb{N}}$.

Suppose that that there is a $\prior$-measure one effectively closed set $C\subseteq \mathbb{S}_{\infty}$, and suppose that on $C$ the likelihood $\psvar\mapsto \pmap{\psvar}$ from $\pspace$ to $\sspace$ be given by taking $\pmap{\psvar}$ to be the countable product of $\psvar$, as a probability measure on the natural numbers; and that off of $C$ one has that the likelihood is the zero element of $\mathcal{M}^+(\sspace)$.

Then conditions~(\ref{thm:doob1})-(\ref{thm:doob2}) of Theorem~\ref{thm:doob} are satisfied. 
\end{prop}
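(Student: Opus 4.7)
I would adapt the strategy of Proposition~\ref{prop:para}, replacing the ``relative frequency of $1$'s'' by the full empirical distribution. Concretely, for $n\geq 1$ define $f_n:\sspace\rightarrow \pspace$ by $f_n(\ssvar)(i) = \tfrac{1}{n}\#\{k<n : \ssvar(k)=i\}$, and let $f_0$ be any fixed computable point. For each $\ssvar$, the point $f_n(\ssvar)$ has at most $n$ nonzero coordinates and sums to $1$, so it lies in $\mathbb{S}_\infty$; since $f_n(\ssvar)$ depends only on $\ssvar_n$, each $f_n$ is uniformly computable continuous into the computable Polish subspace $\mathbb{S}_\infty$ of $[0,1]^{\mathbb{N}}$ (cf.\ Proposition~\ref{prop:sinfty}).

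For condition~(\ref{thm:doob1}) of Theorem~\ref{thm:doob}, I would first note that every $\psvar$ in $\mathsf{KR}^{\prior}$ belongs to the effectively closed measure-one set $C$ (by the remark following Proposition~\ref{prop:sigma02nu}), so under $\pmap{\psvar}$ the coordinates $\ssvar(k)$ are i.i.d.\ $\psvar$-distributed $\mathbb{N}$-valued random variables. The classical SLLN applied to each indicator $I_{\{\ssvar(k)=i\}}$ (mean $\psvar(i)$) gives $f_n(\ssvar)(i)\to \psvar(i)$ $\pmap{\psvar}$-a.s.; intersecting over $i\geq 0$ yields coordinate-wise convergence $\pmap{\psvar}$-a.s., and integrating over $\psvar\in \mathsf{KR}^{\prior}$ (which has $\prior$-measure one) gives a $\prodm$-measure-one subset of $\mathsf{KR}^{\prior}\times \sspace$ on which $f_n(\ssvar)\to \psvar$ coordinate-wise. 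The remaining point is upgrading this to convergence in the $d_0$-metric of Proposition~\ref{prop:pi02eff}: the Hilbert-cube part $d$ converges by dominated convergence (coordinates are bounded by $1$ and the weights $2^{-(i+1)}$ are summable), and for each fixed $k$ the additional terms $\min(2^{-(k+1)},|d(f_n(\ssvar),C_k)^{-1}-d(\psvar,C_k)^{-1}|)$ tend to zero because $z\mapsto d(z,C_k)$ is $1$-Lipschitz and $d(\psvar,C_k)>0$ for $\psvar\in \mathbb{S}_\infty$. A further dominated-convergence step in the counting measure on $k$ closes the sum, and the $D_k$-terms are handled identically.

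For condition~(\ref{thm:doob2}), I would use the Effective Baire Category Theorem, following the template of Proposition~\ref{prop:fnthebasis}, to exhibit a $\prior$-computable basis of $\mathbb{S}_\infty$ consisting of cylinders $V=\{\psvar\in \mathbb{S}_\infty :\psvar(j)\in V_j \text{ for all } j<N\}$ where each $V_j$ is of the form $(p,q)$, $[0,q)$, $(p,1]$, or $[0,1]$ with uniformly computable endpoints $p,q$ chosen irrational (hence distinct from every rational $k/n$ with $0\leq k\leq n$, $n\geq 1$) and such that the resulting cylinder has computable $\prior$-measure. Since $f_n(\ssvar)(j)\in\{0,1/n,\ldots,1\}$, the condition ``$f_n(\ssvar)\in V$'' becomes a decidable property of $\ssvar_n\in \mathbb{N}^n$ under irrationality of the endpoints. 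Thus $f_n^{-1}(V)=\bigsqcup_{\sigma\in S_n}[\sigma]$ where $S_n\subseteq \mathbb{N}^n$ is decidable; both $S_n$ and $\mathbb{N}^n\setminus S_n$ being c.e.\ makes $f_n^{-1}(V)$ simultaneously c.e.\ open and effectively closed, uniformly in the index of $V$ and in $n$.

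The main obstacle is the topological step inside condition~(\ref{thm:doob1}): the metric $d_0$ on $\mathbb{S}_\infty$ is strictly finer than the restriction of the Hilbert-cube metric, and coordinate-wise convergence in $[0,1]^{\mathbb{N}}$ does not in general imply convergence in $\mathbb{S}_\infty$ (this is precisely why $\mathbb{S}_\infty$ is not closed in the Hilbert cube). What saves the argument is that the limit $\psvar$ genuinely lies in $\mathbb{S}_\infty$ and is therefore bounded away from each exceptional set $C_k, D_k$, so the reciprocal-distance terms in $d_0$ remain well-behaved along the sequence $f_n(\ssvar)\to \psvar$. The other steps are essentially a coordinate-wise reprise of Proposition~\ref{prop:para}.
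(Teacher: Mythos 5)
Your proposal is correct, and it uses the same key map $f_n$ (the empirical distribution) as the paper, but the verification runs along a genuinely different route. The paper never re-proves anything in $\mathbb{N}^{\mathbb{N}}$: it factors everything through the commuting maps $\pi_j\circ f_n = g_n\circ c_j$ (with $c_j(\ssvar)(i)=I_{\{j\}}(\ssvar(i))$ and $g_n$ the Cantor-space averaging map), pushes the prior and the joint distribution forward to $[0,1]$ and to $[0,1]\times 2^{\mathbb{N}}$ coordinate by coordinate, and then cites Proposition~\ref{prop:fnthebasis} for condition~(\ref{thm:doob2}) and Proposition~\ref{prop:limrelfreqs} for condition~(\ref{thm:doob1}), obtaining convergence on all of $\mathsf{SR}^{\prodm}$ (which is a $\prodm$-measure-one subset of $\mathsf{KR}^{\prior}\times\sspace$ by Proposition~\ref{prop:elefact}). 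You instead argue directly: the classical SLLN for condition~(\ref{thm:doob1}) and a direct decidability argument on length-$n$ strings of $\mathbb{N}^{<\mathbb{N}}$ for condition~(\ref{thm:doob2}). Both are sound. Your observation that condition~(\ref{thm:doob1}) only demands a classical $\prodm$-measure-one statement, so that the non-effective SLLN suffices, is correct and makes the argument more elementary; the paper's reduction buys re-use of already-proved effective statements and, incidentally, pointwise convergence at every $\prodm$-Schnorr random. Your condition~(\ref{thm:doob2}) construction (cylinders with computable endpoints avoiding every rational $k/n$, chosen by the Effective Baire Category Theorem so that measures are computable) is essentially what Proposition~\ref{prop:fnthebasis} produces coordinate-wise, so the two bases coincide in substance. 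Finally, your explicit verification that coordinate-wise convergence upgrades to $d_0$-convergence is more careful than the paper, which silently uses that $d_0$ is compatible with the subspace topology (so that convergence of a sequence of points of $\mathbb{S}_{\infty}$ to a limit \emph{in} $\mathbb{S}_{\infty}$ is just coordinate-wise convergence); your framing slightly overstates the danger, but your resolution via the $1$-Lipschitz distance functions and dominated convergence over $k$ is correct.
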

\begin{proof}
We use these four computable continuous maps, which commute:
\begin{multicols}{2}

\begin{itemize}[leftmargin=*]
    \item[] $f_n(\ssvar)(j)=\frac{1}{n}\sum_{i<n} I_{\{j\}}(\ssvar(i))$
    \item[]
    \item[] $\pi_j(\psvar)=\psvar(j)$
    \item[]
    \item[]  $c_j(\ssvar)(i)=I_{\{j\}}(\ssvar(i))$
    \item[]
    \item[]  $g_n(y)=\frac{1}{n}\sum_{i<n} y(i)$
 \end{itemize}
\xymatrix{  & & \mathbb{S}_{\infty} \ar[drr]^{\pi_j} & &   \\
           \mathbb{N}^{\mathbb{N}} \ar[drr]_{c_j}\ar[urr]^{f_n} & &   & & [0,1]\\ 
        & & 2^{\mathbb{N}} \ar[urr]_{g_n} & & \\ }
\end{multicols}

First we verify condition (\ref{thm:doob2}) of Theorem~\ref{thm:doob}.  For each $j\geq 0$, consider the pushforward probability measure $\prior_j:=\pi_j \# \prior$ on $[0,1]$. By  Proposition~\ref{prop:fnthebasis}, uniformly in $j\geq 0$ we can compute a $\prior_j$-computable basis on $[0,1]$ such that for all $n\geq 0$ and all elements $U$ of the $p_j$-computable basis on $[0,1]$ one has that $g_n^{-1}(U)$ is uniformly both c.e. open and effectively closed in $2^{\mathbb{N}}$. Since $c_j$ is computable continuous, this means that $(g_n\circ c_j)^{-1}(U)=(\pi_j \circ f_n)^{-1}(U)$ is uniformly both c.e. open and effectively closed in $\mathbb{N}^{\mathbb{N}}$. Then a $\prior$-computable basis on $\mathbb{S}_{\infty}$ is given by elements of the form $V=\bigcap_{j<m} \pi_j^{-1}(U_j)$, where $U_j$ for $j<m$ comes from the $p_j$-computable basis on $[0,1]$. For this $V$, we have that $f_n^{-1}(V)=\bigcap_{j<m} (\pi_j\circ f_n)^{-1}(U_j)$; thus $f_n^{-1}(V)$ is both c.e. open and effectively closed in $\sspace$.

Second we verify condition (\ref{thm:doob1}) of Theorem~\ref{thm:doob}. For each $j\geq 0$, consider the pushforward probability measure $\prodm_j:=(\pi_j\otimes c_j)\# \prodm$ on $[0,1]\times 2^{\mathbb{N}}$, and for each $j\geq 0$ and $\psvar$ in $\mathbb{S}_{\infty}$ consider the pushforward measure $\pmapi{j}{\psvar}:=c_j\# \pmap{\psvar}$ on Cantor space~$2^{\mathbb{N}}$. On the $\prior$-measure one set $C$, this is the same as the probability measure on Cantor space which gives heads probability $\psvar(j)$ on each flip. Using the Greek letter $\rho$ (rho), let $\rho(\cdot\hspace{-.5mm}\mid\hspace{-.5mm}\upsilon)$ be the probability on Cantor space parameterized by $\upsilon$ from $[0,1]$ which gives heads probability $\upsilon$ on each flip; then we have $\pmapi{j}{\psvar}=\rho(\cdot \hspace{-.5mm}\mid\hspace{-.5mm} \psvar(j))\cdot I_C(\psvar)$. Further one has the equation $\prodm_j((a,b)\times [\sigma]) = \int_a^b \rho([\sigma]\hspace{-.5mm}\mid\hspace{-.5mm} \upsilon) \; d \prior_j(\upsilon)$, which says that the pushforward $\prodm_j$ is the same as the joint distribution given by prior $\prior_j$ on $[0,1]$ and likelihood $\upsilon\mapsto \rho(\cdot\hspace{-.5mm}\mid\hspace{-.5mm}\upsilon)$. 
\begin{proofdetail}
To see this, first note that $\pi_j^{-1}(\pi_j(C))\supseteq C$ and so both have $\prior$-measure one, and hence the effectively closed set $\pi_j(C)$ has $\prior_j$ measure 1. Then we have the following:
\begin{align*}
& \prodm_j((a,b)\times [\sigma]) = \prodm(\pi_j^{-1}(a,b)\times c_j^{-1}([\sigma])) = \int_{\pi_j^{-1}(a,b)} \pmapews{c_j^{-1}([\sigma])}{\psvar} \; d\prior(\psvar) \\
& = \int_{\pi_j^{-1}(a,b)\cap \pi_j^{-1}(\pi_j(C))} \rho([\sigma] \hspace{-.5mm}\mid\hspace{-.5mm} \pi_j(\psvar)) \; d\prior(\psvar) = \int_{(a,b)\cap \pi_j(C)} \rho([\sigma]\hspace{-.5mm}\mid\hspace{-.5mm} \upsilon) \; d \prior_j(\upsilon) \\
& =\int_a^b \rho([\sigma]\hspace{-.5mm}\mid\hspace{-.5mm} \upsilon) \; d \prior_j(\upsilon)
\end{align*}
where the second-to-last identity follows from change of variables. 
    
\end{proofdetail}
We now finish the verification of condition (\ref{thm:doob1}) of Theorem~\ref{thm:doob}. By Proposition~\ref{prop:elefact}, it suffices to show that for all $(\psvar, \ssvar)$ in $\mathsf{SR}^{\prodm}$, one has that $\lim_n f_n(\ssvar) = \psvar$. To show this is to show that, for all $j\geq 0$, we have $\lim_n \pi_j(f_n(\ssvar)) = \pi_j(\psvar)$, which by the diagram is the same as $\lim_n g_n(c_j(\ssvar))=\psvar(j)$. Let $j\geq 0$. Since $(\psvar, \ssvar)$ is in $\mathsf{SR}^{\prodm}$ and since $\pi_j\otimes c_j:\prodspace\rightarrow [0,1]\times 2^{\mathbb{N}}$ is computable continuous, we have that $(\psvar(j), c_j(\ssvar))$ is in $\mathsf{SR}^{\prodm_j}$. By Proposition~\ref{prop:limrelfreqs} one has that $\lim_n g_n(c_j(\ssvar))=\psvar(j)$, which is what we wanted to show. 
\end{proof}

The following proposition is a preliminary to our result on identifiability:
\begin{prop}\label{prop:superspace}
Suppose that the parameter space $\pspace$ is an effectively closed subset of computable Polish space $\pspacealt$, and that $\pspace$ is a computable Polish subspace of $\pspacealt$.

Define extended priors and likelihoods on the superspace $\pspacealt$ as follows:
\begin{enumerate}[leftmargin=*]
    \item Define prior $\prioralt$ on $\pspacealt$ by pushforward $\prioralt=\iota\# \prior$, where $\iota:\pspace\rightarrow \pspacealt$ is the identity.
    \item Define likelihood $\widetilde{\psvar}\mapsto \pmapalt{\widetilde{\psvar}}$ on $\pspacealt$ by $\pmapalt{\widetilde{\psvar}}=\pmap{\widetilde{\psvar}}\cdot I_{\pspace}(\widetilde{\psvar})$.
\end{enumerate}
Suppose that the conditions~(\ref{thm:doob1})-(\ref{thm:doob2}) of Theorem~\ref{thm:doob} hold with respect to $\pspacealt$, $\prioralt$, and $\widetilde{\psvar}\mapsto \pmapalt{\widetilde{\psvar}}$.

Then all $\psvar$ in $\mathsf{SR}^{\prior}$ are computably consistent relative to $\pspace$, $\prior$, and $\psvar\mapsto \pmap{\psvar}$.
\end{prop}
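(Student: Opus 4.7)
The plan is to apply Theorem~\ref{thm:doob} to the extended triple $(\pspacealt,\prioralt,\widetilde\psvar\mapsto\pmapalt{\widetilde\psvar})$ and then transfer the conclusion back to the original triple through the inclusion $\iota:\pspace\hookrightarrow\pspacealt$. Two crucial facts drive the translation: first, $\prioralt$ is concentrated on $\pspace$ (since $\prior(\pspace)=1$), and second, by Definition~\ref{defn:subspace}, c.e. opens on $\pspace$ and c.e. opens on $\pspacealt$ are in two-way computable correspondence via intersection with $\pspace$.

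First I would verify that the extended triple satisfies the effective assumptions (Definition~\ref{defn:assum}) needed to invoke Theorem~\ref{thm:doob}. The prior $\prioralt=\iota\#\prior$ is computable on $\pspacealt$ because $\iota$ is computable continuous by Remark~\ref{rmk:howused}(\ref{rmk:howused:1}), so pushforwards of computable measures under it are computable. For the likelihood, I fix a version $\pmape{\sigma}{\psvar}=g_\sigma(\psvar)-h_\sigma(\psvar)$ with uniformly computable $L_1(\prior)$ Schnorr tests $g_\sigma,h_\sigma$ on $\pspace$, and extend each to $\pspacealt$ by setting $\widetilde{g}_\sigma=\infty$ on the c.e. open set $\pspacealt\setminus\pspace$ (which is c.e. open since $\pspace$ is effectively closed in $\pspacealt$). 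Using Definition~\ref{defn:subspace}(\ref{defn:subspace:2}) to lift each superlevel set $g_\sigma^{-1}(q,\infty]$ to a c.e. open $W$ in $\pspacealt$, one checks that $\widetilde{g}_\sigma^{-1}(q,\infty]=W\cup(\pspacealt\setminus\pspace)$ is uniformly c.e. open, so $\widetilde{g}_\sigma$ is lsc; moreover its $L_1(\prioralt)$ norm equals $\|g_\sigma\|_{L_1(\prior)}$ because $\prioralt$ is concentrated on $\pspace$, hence is computable. Thus $\widetilde{g}_\sigma$ is uniformly an $L_1(\prioralt)$ Schnorr test, and under the convention $\infty-\infty=0$ the difference $\widetilde{g}_\sigma-\widetilde{h}_\sigma$ serves as a version of $\pmapalte{\sigma}{\widetilde\psvar}$ in the sense of Definition~\ref{defn:conventionversions}.

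Next I would observe that $\mathsf{SR}^{\prior}$ agrees with $\mathsf{SR}^{\prioralt}\cap\pspace$ under the identification via $\iota$: any sequential Schnorr $\prioralt$-test on $\pspacealt$ restricts (via Definition~\ref{defn:subspace}(\ref{defn:subspace:1})) to a sequential Schnorr $\prior$-test on $\pspace$, and conversely any sequential Schnorr $\prior$-test on $\pspace$ lifts (via Definition~\ref{defn:subspace}(\ref{defn:subspace:2})) to one on $\pspacealt$, in both cases preserving the measure because $\prioralt(W)=\prior(W\cap\pspace)$ for every c.e. open $W\subseteq\pspacealt$. Similarly, for any c.e. open $V\subseteq\pspace$ with $\prior(V)$ computable, writing $V=U\cap\pspace$ for some c.e. open $U\subseteq\pspacealt$ gives $\prioralt(U)=\prior(V)$, and direct computation with formula~(\ref{eqn:post}), using that $\pmapalte{\sigma}{\widetilde\psvar}$ agrees with $\pmape{\sigma}{\widetilde\psvar}$ on $\pspace$ and vanishes off $\pspace$, yields the identity $\postalte{U}{\sigma}=\poste{V}{\sigma}$ for all $\sigma\in T$; also $I_U(\psvar)=I_V(\psvar)$ and $\pmapalt{\psvar}=\pmap{\psvar}$ whenever $\psvar\in\pspace$.

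Putting these together, for each $\psvar\in\mathsf{SR}^{\prior}$ the lifted point $\iota(\psvar)$ lies in $\mathsf{SR}^{\prioralt}$, so Theorem~\ref{thm:doob} applied to the extended triple yields $\lim_n\postalte{U}{\ssvarn{n}}=I_U(\iota(\psvar))$ for $\pmapalt{\iota(\psvar)}$-a.s. many $\ssvar$, for every c.e. open $U\subseteq\pspacealt$ with computable $\prioralt$-measure; translating via the identifications above gives $\lim_n\poste{V}{\ssvarn{n}}=I_V(\psvar)$ for $\pmap{\psvar}$-a.s. many $\ssvar$, for every c.e. open $V\subseteq\pspace$ with computable $\prior$-measure, which is exactly computable consistency of $\psvar$ relative to the original triple. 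The main obstacle I anticipate is the bookkeeping in the first step---specifically, checking that the $\infty$-extension $\widetilde{g}_\sigma$ is uniformly lsc on $\pspacealt$ with computable $L_1(\prioralt)$ norm, and that the resulting difference genuinely represents the extended likelihood as an $L_1(\prioralt)$-computable point compatible with Definition~\ref{defn:conventionversions}.
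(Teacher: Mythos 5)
Your proposal is correct and follows essentially the same route as the paper: extend the prior and likelihood to $\pspacealt$, represent the extended likelihood as a difference of $L_1(\prioralt)$ Schnorr tests supported off the $\prioralt$-null c.e.\ open $\pspacealt\setminus\pspace$, note that $\iota$ carries $\mathsf{SR}^{\prior}$ into $\mathsf{SR}^{\prioralt}$ and that $\postalte{V}{\sigma}=\poste{V\cap\pspace}{\sigma}$, then apply Theorem~\ref{thm:doob} to the extended triple and translate back through the subspace correspondence of Definition~\ref{defn:subspace}. The only (immaterial) difference is your choice of version of the extended likelihood\textemdash the $\infty$-extension of $g_\sigma,h_\sigma$ off $\pspace$ rather than the paper's pair $\gmape{\sigma}{\cdot}+\hmape{\sigma}{\cdot}\cdot I_{\pspacealt\setminus\pspace}$ and $\hmape{\sigma}{\cdot}+\gmape{\sigma}{\cdot}\cdot I_{\pspacealt\setminus\pspace}$\textemdash both of which yield the same difference on $\mathsf{SR}^{\prioralt}$ under the stated conventions.
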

\begin{proof}
Note that $\iota^{-1}(A)=A\cap \pspace$ for all events $A\subseteq \pspacealt$. Further note that $\pspacealt\setminus \pspace$ is a $\prioralt$-null c.e. open in $\pspacealt$. If $\prodmalt$ denotes the new joint distribution on $\pspacealt\times \sspace$ with respect to the extended prior and likelihood, then for Borel events $A\subseteq \pspacealt$ and $B\subseteq \sspace$, one has $\prodmalt(A\times B)=\prodm((A\cap \pspace)\times B)$. Further for Borel events $A\subseteq \pspacealt$ we have $\postalte{A}{\ssvarn{n}} =\poste{A\cap \pspace}{\ssvarn{n}}$ when the denominator is non-zero.

\begin{proofdetail}

To see these points, note: $\prodm(\iota^{-1}(A)\times B)=\int_{\iota^{-1}(A)} \pmape{B}{\psvar} \; d\prior(\psvar) = \int_{A\cap \pspace} \pmapalte{B}{\psvar} \; d\prior(\psvar)=\int_{A} \pmapalte{B}{\psvar} \; d\prioralt(\psvar)=\prodmalt(A\times B)$. Then for Borel events $A\subseteq \pspacealt$ we have $\postalte{A}{\ssvarn{n}} = \frac{\prodmalt(A\times [\ssvarn{n}])}{\prodmalt(\pspacealt\times [\ssvarn{n}])}=\frac{\prodm(\iota^{-1}(A)\times [\ssvarn{n}])}{\prodm(\pspace\times [\ssvarn{n}])}=\poste{\iota^{-1}(A)}{\ssvarn{n}}=\poste{A\cap \pspace}{\ssvarn{n}}$ when the denominator is non-zero.

\end{proofdetail}

We show that for any version of the original likelihood $\pmape{\sigma}{\cdot}$ in the sense of Definition~\ref{defn:conventionversions}, there is a version of extended likelihood $\pmapalte{\sigma}{\cdot}$ in the sense of Definition~\ref{defn:conventionversions} which agree on $\mathsf{SR}^{\prioralt}$. For, suppose that for a version of $\psvar\mapsto \pmape{\sigma}{\psvar}$ we use the difference $\gmape{\sigma}{\psvar}-\hmape{\sigma}{\psvar}$, where $\gmape{\sigma}{\cdot}, \hmape{\sigma}{\cdot}:\pspace\rightarrow [0,\infty]$ are $L_1(\prior)$ Schnorr tests. Since $\pspacealt\setminus \pspace$ is c.e. open in $\pspacealt$, we have that $\gmape{\sigma}{\widetilde{\psvar}} + \hmape{\sigma}{\widetilde{\psvar}} \cdot I_{\pspacealt\setminus \pspace}(\widetilde{\psvar})$ and $\gmape{\sigma}{\widetilde{\psvar}}\cdot I_{\pspacealt\setminus \pspace}(\widetilde{\psvar}) + \hmape{\sigma}{\widetilde{\psvar}}$ are $L_1(\prioralt)$ Schnorr tests. Further, on $\mathsf{SR}^{\prioralt}$, their difference is equal to $\big(\gmape{\sigma}{\widetilde{\psvar}}-\hmape{\sigma}{\psvar}\big)\big(1-I_{\pspacealt\setminus \pspace}(\widetilde{\psvar})\big) =\big(\gmape{\sigma}{\widetilde{\psvar}}-\hmape{\sigma}{\widetilde{\psvar}}\big)(I_{\pspace}(\widetilde{\psvar}))$.

Suppose that $\psvar$ is in $\mathsf{SR}^{\prior}$. Since $\iota:\pspace\rightarrow \pspacealt$ is computable continuous, we have that $\psvar$ is in $\mathsf{SR}^{\prioralt}$. Then by Theorem~\ref{thm:doob} we have that $\psvar$ is computable consistent relative to $\pspacealt$, $\prioralt$, and $\widetilde{\psvar}\mapsto \pmapalt{\widetilde{\psvar}}$. Let $U\subseteq \pspace$ be c.e. open with $\prior(U)$ computable. Since $\pspace$ is a computable Polish subspace of $\pspacealt$ we have that $U=\pspace\cap V$, where $V$ is a c.e. open in $\pspacealt$. Then $\prioralt(V)=\prior(\pspace\cap V)=\prior(U)$ is computable. Then $\lim_n \postalte{V}{\ssvarn{n}}=I_{V}(\psvar)$ for $\postalt{\psvar}$-many $\ssvar$ in $\sspace$. By the second to last paragraph and the fact that $\psvar$ is in $\pspace$, we have $\lim_n \postalte{V}{\ssvarn{n}}=I_{V}(\psvar)$ for $\post{\psvar}$-many $\ssvar$ in $\sspace$. By the first paragraph, we have $\lim_n \poste{V\cap \pspace}{\ssvarn{n}}=I_{V\cap \pspace}(\psvar)$ for $\post{\psvar}$-many $\ssvar$ in $\sspace$. By $U=\pspace\cap V$, we have that $\lim_n \poste{U}{\ssvarn{n}}=I_{U}(\psvar)$ for $\post{\psvar}$-many $\ssvar$ in $\sspace$.

\end{proof}

Here is then the proof of Proposition~\ref{thm:effectiveidentifiability}:
\begin{proof}
By passing to the image, we may assume that $\pspace$ is an effectively closed subset of $\pspacealt:=\mathbb{S}_{\infty}$, and that $\pspace$ is a computable Polish subspace of $\pspacealt$. Define the extended prior and likelihood relative to $\pspacealt$ as in the statement of Proposition~\ref{prop:superspace}. Then by Proposition~\ref{prop:sinftyprob} the conditions~(\ref{thm:doob1})-(\ref{thm:doob2}) of Theorem~\ref{thm:doob} hold with respect to $\pspacealt$ and the extended prior and likelihood. Then we are done by Proposition~\ref{prop:superspace}.
\end{proof}

\section{Sufficient conditions for inconsistency}\label{sec:suffincon}

Up to this point in the paper, we have been assuming that the prior $\prior$ is a fixed computability probability measure on the parameter space $\pspace$, as in the Effective assumptions  (Definition~\ref{defn:assum}(\ref{assum2})). To handle Freedman's Inconsistency Theorem, we need to shift to considering the prior to be an arbitrary probability measure on the parameter space $\pspace$, that is, an arbitrary element of $\mathcal{P}(\pspace)$.

We note an elementary proposition, which makes salient a double integral:

\begin{prop}\label{prop:DCTapp}
If $(\prior, \psvar)$ is classically consistent, then for all closed $C$ not containing $\psvar$ one has:
\begin{equation}
\label{prop:DCTapp:3} \lim_n \int_{\sspace} \poste{C}{\ssvarn{n}} \; d\pmap{\psvar}(\ssvar)=0, \hspace{5mm}
  \lim_n \int_{\sspace} \int_{\pspace} I_C(\psvar^{\prime}) \; d\post{\ssvarn{n}}(\psvar^{\prime}) \; d\pmap{\psvar}(\ssvar)=0
\end{equation}
The same holds true if $\prior$ is computable and $(\prior,\psvar)$ is computably consistent.
\end{prop}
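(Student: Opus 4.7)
My plan is to reduce the two limits to one, dispatch the classical case by the Dominated Convergence Theorem (DCT), and handle the computable case by manufacturing a c.e. open surrogate for $\pspace \setminus C$ with computable $\prior$-measure. The main obstacle will be the last step, since computable consistency in Definition~\ref{defn:con:eff}(\ref{defn:con:eff:2}) only controls posteriors on c.e. opens with computable prior-measure, whereas $\pspace \setminus C$ is merely open.

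First, since $\int_{\pspace} I_C(\psvar^{\prime}) \, d\post{\ssvarn{n}}(\psvar^{\prime}) = \poste{C}{\ssvarn{n}}$ by definition of integration against the posterior, the two limits in (\ref{prop:DCTapp:3}) coincide and I only need to prove the first. For the classical case, I would set $U = \pspace \setminus C$, which is open and contains $\psvar$, and apply classical consistency to $U$ to obtain $\lim_n \poste{U}{\ssvarn{n}} = I_U(\psvar) = 1$ for $\pmap{\psvar}$-a.s. many $\ssvar$. From (\ref{eqn:post}) one always has $\poste{C}{\ssvarn{n}} + \poste{U}{\ssvarn{n}} \leq 1$, with equality when the denominator is nonzero; hence $\limsup_n \poste{C}{\ssvarn{n}} \leq 0$ on the same conull set. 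Since $\pmap{\psvar} \in \mathcal{P}(\sspace)$ (built into classical consistency) and $0 \leq \poste{C}{\ssvarn{n}} \leq 1$, DCT with the constant function $1$ as dominator then delivers the first limit in (\ref{prop:DCTapp:3}).

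For the computable case, the strategy is to find a c.e. open $V \subseteq \pspace$ with $C \subseteq V$, $\psvar \notin V$, and $\prior(V)$ computable, and then run essentially the same argument with $V$ in place of $\pspace \setminus C$. To produce $V$, I would use the remark after Definition~\ref{defn:nucomputablebasis}: there is a $\prior$-computable basis of open balls $B(y_i, s_i)$ paired with effectively closed superballs $B[y_i, s_i]$ of the same $\prior$-measure, with centers from the countable dense set and radii dense enough for our purposes. Since $C$ is closed and $\psvar \notin C$, we have $d(\psvar, C) > 0$; by density I can select a basis ball with $d(\psvar, y_i) < s_i/2$ and $s_i < d(\psvar, C)/2$, so that $\psvar \in B[y_i, s_i]$ while $B[y_i, s_i] \cap C = \emptyset$ (if $z \in C$, then $d(y_i, z) \geq d(\psvar, z) - d(\psvar, y_i) > d(\psvar, C) - s_i/2 > s_i$). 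Setting $V := \pspace \setminus B[y_i, s_i]$ gives a c.e. open set containing $C$ but not $\psvar$, with $\prior(V) = 1 - \prior(B(y_i, s_i))$ computable.

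With such a $V$ in hand, computable consistency applied to $V$ yields $\lim_n \poste{V}{\ssvarn{n}} = I_V(\psvar) = 0$ for $\pmap{\psvar}$-a.s. many $\ssvar$, and since $C \subseteq V$ we have $0 \leq \poste{C}{\ssvarn{n}} \leq \poste{V}{\ssvarn{n}}$ pointwise for all $n$. One more application of DCT with dominator $1$ then gives the first limit in (\ref{prop:DCTapp:3}) in the computable case; the second limit follows as before via the Fubini-type identity from the first paragraph.
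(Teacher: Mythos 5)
Your proof is correct, and it takes a more elementary route than the paper's. The paper establishes the pointwise a.s.\ convergence $\poste{C}{\ssvarn{n}}\rightarrow 0$ by first proving full weak convergence $\post{\ssvarn{n}}\rightarrow\delta_{\psvar}$ via the ``open set'' form of the Portmanteau Theorem (using that the c.e.\ opens with computable $\prior$-measure form a topological basis, so that $\liminf_n\poste{V}{\ssvarn{n}}\geq\delta_{\psvar}(V)$ for \emph{every} open $V$), and then applies the ``closed set'' form of Portmanteau to the particular closed set $C$ before invoking DCT. You bypass weak convergence entirely: in the classical case you use complementation against the single open set $\pspace\setminus C$, and in the effective case you manufacture one c.e.\ open $V\supseteq C$ with $\psvar\notin V$ and $\prior(V)$ computable by taking the complement of a closed basis ball around $\psvar$ disjoint from $C$ (legitimate, since the Hoyrup--R\'ojas basis has dense centers and radii and the paired closed balls carry the same $\prior$-measure), then use monotonicity $\poste{C}{\ssvarn{n}}\leq\poste{V}{\ssvarn{n}}$. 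Your separation construction is in effect the concrete content of the paper's remark that the effective hypothesis suffices ``since the c.e.\ opens with computable $\prior$-measure form a topological basis,'' so the two arguments rest on the same underlying fact; what yours buys is the avoidance of the Portmanteau machinery and of quantifying over all open sets, at the cost of proving only what is needed for this one closed set rather than the (reusable) weak convergence $\post{\ssvarn{n}}\rightarrow\delta_{\psvar}$, which is the statement of statistical interest in its own right.
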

\begin{proof}
One uses the hypothesis and the ``open set'' form of Portmanteau to get that $\post{\ssvarn{n}}\rightarrow \delta_{\psvar}$ in $\mathcal{P}(\pspace)$ for $\pmap{\psvar}$-a.s. many $\ssvar$ in $\sspace$. Then one uses the ``closed set'' form of Portmanteau and DCT to get the conclusion.

The first step (the ``open set'' step) of the proof works under the effective hypothesis since the c.e. opens with computable $\prior$-measure form a topological basis.
\end{proof}

\begin{proofdetail}
Here is a more detailed version of the proof:
\begin{proof}
We can focus on the single integral since the double integral follows by writing $\poste{C}{\ssvarn{n}} = \int_{\pspace} I_C(\psvar^{\prime}) \; d\post{\ssvarn{n}}(\psvar^{\prime})$.

Suppose that $V_i$ is a countable basis for $\pspace$ (resp. a $\prior$-computable basis). By classical (resp. computable) consistency, one has that $\lim_n \poste{V_i}{\ssvarn{n}}=I_{V_i}(\psvar)$ on a $\pmap{\psvar}$-measure one set $A_i$ of points $\ssvar$ in $\sspace$. Then $A=\bigcap_i A_i$ also has $\pmap{\psvar}$-measure one. 

Then we claim that for any $\ssvar$ in $A$ and any open $V$ we have $\liminf_n \poste{V}{\ssvarn{n}}\geq \delta_{\psvar}(V)$. This is trivial if $\psvar$ is not in $V$. If $\psvar$ in $V$, then choose $i$ such that $\psvar$ in $V_i$ and $V_i\subseteq V$. Then since $\ssvar$ in $A_i$ we have $\liminf_n \poste{V}{\ssvarn{n}}\geq \liminf_n \poste{V_i}{\ssvarn{n}}=1=\delta_{\psvar}(V)$. 

By the Portmanteau Theorem we then have that for all $\ssvar$ in $A$ that $\post{\ssvarn{n}}\rightarrow \delta_{\psvar}$ in $\mathcal{P}(\pspace)$. 

Let $C$ be a closed set which does not contain $\psvar$. Then by Portmanteau again, we have for any $\ssvar$ in $A$ that $\limsup_n \poste{C}{\ssvarn{n}}\leq \delta_{\psvar}(C)=0$. Then for any $\ssvar$ in $A$ one has $\lim_n \poste{C}{\ssvarn{n}}=0$.

Then apply DCT with respect to $\pmap{\psvar}$ to obtain $\lim_n \int_{\sspace} \poste{C}{\ssvarn{n}} \; d\pmap{\psvar}(\ssvar)= \int_{\sspace} \lim_n\poste{C}{\ssvarn{n}} \; d\pmap{\psvar}(\ssvar) = \int_{\sspace}0 \; d\pmap{\psvar}(\ssvar)=0$.

\end{proof}
\end{proofdetail}

The following proposition provides a sufficient condition for inconsistency:
\begin{prop}\label{prop:thelprop}
Suppose that $\pspace$ has at least two elements. Suppose that $(\prior, \psvar)$ is classically consistent. Then there is a c.e. open $V$ such that 
\begin{equation*}
\limsup_n \int_{\sspace} \int_{\pspace} I_{V}(\psvar^{\prime}) \; d\post{\ssvarn{n}}(\psvar^{\prime}) \; d\pmap{\psvar}(\ssvar) <1
\end{equation*}
The same holds if $\prior$ is computable and $(\prior,\psvar)$ is computably consistent.
\end{prop}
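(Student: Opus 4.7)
The plan is to exhibit a c.e.\ open $V$ that misses the parameter $\psvar$ (and, in the effective case, additionally has computable $\prior$-measure), and then to conclude by the dominated convergence theorem. The double integral simplifies to a single one: for every $\ssvar$, unwinding the inner integral gives $\int_{\pspace} I_V(\psvar')\,d\post{\ssvarn{n}}(\psvar') = \post{\ssvarn{n}}(V) = \poste{V}{\ssvarn{n}}$, whether or not $\post{\ssvarn{n}}$ is an honest probability measure (recall that per equation~(\ref{eqn:post}) and the convention there, $\post{\ssvarn{n}}$ is the zero element of $\mathcal{M}^+(\pspace)$ on the null set where the denominator vanishes). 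So it suffices to produce $V$ with $\limsup_n \int_{\sspace} \poste{V}{\ssvarn{n}}\,d\pmap{\psvar}(\ssvar) < 1$.

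First I would produce $V$. Since $\pspace$ has at least two elements and is Hausdorff (being Polish), its countable dense set contains some point $d\neq \psvar$, and there is a rational $\epsilon>0$ with $2\epsilon < d_{\pspace}(d,\psvar)$. Setting $V := B(d,\epsilon)$ gives a c.e.\ open sitting inside the effectively closed ball $\overline{B}(d,\epsilon)$, which does not contain $\psvar$; this $V$ handles the classical case. For the effective case, I would instead draw $V$ from the $\prior$-computable basis of Definition~\ref{defn:nucomputablebasis} (the Hoyrup--R\'ojas construction recalled after that definition allows taking open balls around dense-set points), picking a basis ball centered at $d$ with radius $<d_{\pspace}(d,\psvar)$; this automatically makes $\prior(V)$ computable, while keeping $\psvar\notin V$.

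Next I would invoke consistency. Since $\psvar\notin V$, $I_V(\psvar)=0$. Classical consistency of $(\prior,\psvar)$ applied to the open $V$ (Definition~\ref{defn:con:eff}(\ref{defn:con:eff:20})) gives $\poste{V}{\ssvarn{n}}\to 0$ for $\pmap{\psvar}$-a.s.\ many $\ssvar$; in the effective case, computable consistency (Definition~\ref{defn:con:eff}(\ref{defn:con:eff:2})) applied to the c.e.\ open $V$ with $\prior(V)$ computable yields the same convergence. Since by the first conjunct of Definition~\ref{defn:con:eff} the likelihood $\pmap{\psvar}$ is a probability measure, and $0\le \poste{V}{\ssvarn{n}}\le 1$ uniformly, the dominated convergence theorem with dominating function $1$ yields $\int_{\sspace}\poste{V}{\ssvarn{n}}\,d\pmap{\psvar}(\ssvar)\to 0$. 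In particular the $\limsup$ equals $0<1$.

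The main obstacle, such as it is, is the separation step: exhibiting the c.e.\ open $V$ that avoids $\psvar$ and (in the effective case) lies in the $\prior$-computable basis. Once this is in hand, the remainder of the argument is a routine application of consistency and dominated convergence; no further control on the pointwise behavior of the likelihood or the posterior is needed, since the reduction to $\poste{V}{\ssvarn{n}}$ holds for every $\ssvar$ under the blanket conventions of Definition~\ref{defn:conventionversions}.
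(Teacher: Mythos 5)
Your proof is correct, but it takes a more direct route than the paper's. The paper proves the claim by choosing disjoint c.e.\ opens $U,V$ with $\psvar\in U$, passing to the closed set $C=\pspace\setminus U\supseteq V$, and invoking Proposition~\ref{prop:DCTapp}, whose proof runs through the Portmanteau theorem twice: the ``open set'' form to upgrade consistency to weak convergence $\post{\ssvarn{n}}\to\delta_{\psvar}$, and then the ``closed set'' form to control $\poste{C}{\ssvarn{n}}$, followed by DCT. You instead apply Definition~\ref{defn:con:eff} directly to an open $V$ with $\psvar\notin V$, reading off $\poste{V}{\ssvarn{n}}\to I_V(\psvar)=0$ a.s.\ and finishing with DCT. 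Your shortcut is legitimate precisely because this paper's consistency notion quantifies over \emph{all} (c.e.)\ opens rather than only neighborhoods of $\psvar$ (a point the paper itself flags after Definition~\ref{defn:con:eff}); had the traditional restricted definition been used, the detour through closed sets and Portmanteau in Proposition~\ref{prop:DCTapp} would be unavoidable, and that proposition also delivers the stronger weak-convergence information that the paper reuses. Your handling of the effective case --- drawing $V$ from the Hoyrup--R\'ojas basis of balls centered at dense points with radii dense in $\mathbb{R}^{>0}$, so that $\prior(V)$ is computable and $\psvar\notin V$ --- is sound, as is the observation that the inner integral collapses to $\poste{V}{\ssvarn{n}}$ regardless of whether the posterior is a probability measure, so that DCT with dominating constant $1$ applies.
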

\begin{proof}
Since $\pspace$ has at least two elements, choose disjoint c.e. opens $U,V$ such that $\psvar$ is in $U$. Let $C=\pspace\setminus U$ be the relative complement, so that $V\subseteq C$. Since  $(\prior, \psvar)$ is classically consistent, by Proposition~\ref{prop:DCTapp} we have $\lim_n \int_{\sspace} \poste{C}{\ssvarn{n}} \; d\pmap{\psvar}(\ssvar)=0$. Since $V\subseteq C$ we have $\lim_n \int_{\sspace} \poste{V}{\ssvarn{n}} \; d\pmap{\psvar}(\ssvar)=0$. From this the conclusion follows easily.
\end{proof}

Now we discuss how to ensure that this double integral is lsc:
\begin{prop}\label{prop:freeprep}
(Integrals lsc as function of prior, when likelihood comp. cont.).

Suppose the likelihood $\psvar\mapsto \pmap{\psvar}$ maps from $\pspace$ into $\mathcal{P}(\sspace)$. Suppose further the likelihood $\psvar\mapsto \pmape{\sigma}{\psvar}$ is computable continuous, uniformly in $\sigma$ from $T$. Then:

\begin{enumerate}[leftmargin=*]
    \item \label{prop:freeprep:1} The map $(\prior,\ssvar)\mapsto \poste{U}{\ssvarn{n}}$ is lsc from $\mathcal{P}(\pspace)\times \sspace$ to $[0,1]$, uniformly in c.e. open $U\subseteq \pspace$ and $n\geq 0$.
    \item \label{prop:freeprep:2} The map $(\prior,\ssvar)\mapsto \int_{\pspace} f(\psvar^{\prime}) \; d\post{\ssvarn{n}}(\psvar^{\prime})$ is lsc from $\mathcal{P}(\pspace)\times \sspace$ to $[0,\infty]$, uniformly in $n\geq 0$ and lsc $f:\pspace\rightarrow [0,\infty]$.
    \item \label{prop:freeprep:3} The map $(\prior, \psvar)\mapsto \int_{\sspace} g(p,\ssvar^{\prime}) \; d\pmap{\psvar}(\ssvar^{\prime})$ is lsc from  $\mathcal{P}(\pspace)\times \pspace$ to $[0,\infty]$, uniformly in lsc $g:\mathcal{P}(\pspace)\times \sspace\rightarrow [0,\infty]$.
    \item \label{prop:freeprep:4} The map $(\prior, \psvar)\mapsto \int_{\sspace} \int_{\pspace} f(\psvar^{\prime}) \; d\post{\ssvarn{n}}(\psvar^{\prime}) \;d\pmap{\psvar}(\ssvar)$ is lsc from  $\mathcal{P}(\pspace)\times \pspace$ to $[0,\infty]$, uniformly in $n\geq 0$ and lsc $f:\pspace\rightarrow [0,\infty]$.
\end{enumerate}
\end{prop}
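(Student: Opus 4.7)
The plan is to prove the four parts in sequence, with each building on its predecessors. For part~(\ref{prop:freeprep:1}), I would start from the observation that, by (\ref{eqn:post}), one has $\poste{U}{\ssvarn{n}}>q$ for rational $q>0$ iff there exist $\sigma$ in $T$ of length $n$ with $\ssvar\in[\sigma]$ and rationals $r,s>0$ with $s<r/q$ such that $\int_U \pmape{\sigma}{\psvar}\, d\prior(\psvar)>r$ and $0<\int_{\pspace}\pmape{\sigma}{\psvar}\, d\prior(\psvar)<s$. Since $\pmape{\sigma}{\cdot}$ takes values in $[0,1]$ and is computable continuous, the map $\psvar\mapsto I_U(\psvar)\cdot\pmape{\sigma}{\psvar}$ is lsc and bounded, hence $\prior\mapsto\int_U\pmape{\sigma}{\psvar}\,d\prior(\psvar)$ is lsc in $\prior$; and the denominator $\prior\mapsto\int_{\pspace}\pmape{\sigma}{\psvar}\,d\prior(\psvar)$ is computable continuous in $\prior$ because $\pmape{\sigma}{\cdot}$ is computable continuous and bounded. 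This makes both the ``$>r$'' and ``$<s$'' conditions c.e. open in $\prior$, and of course ``$\ssvar\in[\sigma]$'' is c.e. open in $\ssvar$, so the whole condition is uniformly c.e. open in $(\prior,\ssvar)$.

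For part~(\ref{prop:freeprep:2}), given an lsc $f:\pspace\to[0,\infty]$, I would effectively write $f$ as a supremum $f=\sup_k f_k$ of a computable increasing sequence of non-negative rational simple functions of the form $f_k=\sum_{j\leq k}q_j I_{U_j}$, where $q_j\geq 0$ is rational and $U_j$ comes from a computable basis of c.e. opens in $\pspace$. Then by MCT, $\int f\, d\post{\ssvarn{n}}=\sup_k\sum_{j\leq k}q_j\poste{U_j}{\ssvarn{n}}$, and each $(\prior,\ssvar)\mapsto\poste{U_j}{\ssvarn{n}}$ is uniformly lsc by~(\ref{prop:freeprep:1}). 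Finite non-negative rational combinations of uniformly lsc functions are uniformly lsc, and an effectively increasing supremum of uniformly lsc functions is uniformly lsc, which gives the result.

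For part~(\ref{prop:freeprep:3}), I would proceed analogously. Given lsc $g:\mathcal{P}(\pspace)\times\sspace\to[0,\infty]$, effectively approximate it from below by non-negative rational simple functions $g_k=\sum_{j\leq k}q_j I_{V_j\times W_j}$, where $V_j\subseteq\mathcal{P}(\pspace)$ and $W_j\subseteq\sspace$ are c.e. opens (obtained from a product basis). Then
\begin{equation*}
\int_{\sspace}g_k(\prior,\ssvar^{\prime})\,d\pmap{\psvar}(\ssvar^{\prime})=\sum_{j\leq k}q_j\, I_{V_j}(\prior)\cdot\pmape{W_j}{\psvar}.
\end{equation*}
The factor $I_{V_j}(\prior)$ is lsc in $\prior$; the factor $\pmape{W_j}{\psvar}$ is lsc in $\psvar$, because a c.e. open $W_j\subseteq\sspace$ decomposes as a computable disjoint union $\bigsqcup_i[\sigma_i]$ of basic clopens, whence $\pmape{W_j}{\psvar}=\sup_N\sum_{i<N}\pmape{\sigma_i}{\psvar}$ is a computable increasing supremum of uniformly computable continuous functions of $\psvar$. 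Hence each summand is lsc in $(\prior,\psvar)$, and so is their sum, and so is the supremum in $k$; then MCT yields the claim.

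Finally, part~(\ref{prop:freeprep:4}) follows directly by composition: define $h(\prior,\ssvar):=\int_{\pspace}f(\psvar^{\prime})\,d\post{\ssvarn{n}}(\psvar^{\prime})$, which is uniformly lsc in $(\prior,\ssvar)$ by~(\ref{prop:freeprep:2}), and then apply~(\ref{prop:freeprep:3}) to $h$ to conclude that $(\prior,\psvar)\mapsto\int_{\sspace}h(\prior,\ssvar)\,d\pmap{\psvar}(\ssvar)$ is uniformly lsc in $(\prior,\psvar)$. The main subtle point I expect is in~(\ref{prop:freeprep:1}): one must establish that the denominator is not merely lsc but genuinely computable continuous in $\prior$, which is what licenses the strict inequality ``denominator $<s$'' as a c.e. open condition; this is the only place the upper bound $\pmape{\sigma}{\psvar}\leq 1$ (ensured by $\pmap{\psvar}\in\mathcal{P}(\sspace)$) is essential.
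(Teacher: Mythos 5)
Parts~(\ref{prop:freeprep:1}) and (\ref{prop:freeprep:4}) of your proposal coincide with the paper's argument: the same unfolding of (\ref{eqn:post}) into c.e.~open conditions on numerator and denominator, with the denominator $\prior\mapsto\int_{\pspace}\pmape{\sigma}{\psvar}\,d\prior(\psvar)$ shown computable continuous (the paper gets upper semi-computability by integrating $1-\pmape{\sigma}{\cdot}$, which is exactly where the hypothesis $\pmap{\psvar}\in\mathcal{P}(\sspace)$ enters, as you correctly flag); and (\ref{prop:freeprep:4}) is obtained by composing (\ref{prop:freeprep:2}) and (\ref{prop:freeprep:3}). For (\ref{prop:freeprep:2}) you take a genuinely different route: the paper reduces ``$\int f\,d\post{\ssvarn{n}}>r$'' to positivity of $\poste{f^{-1}(q,\infty]}{\ssvarn{n}}$ for some rational $q>r$, whereas you approximate $f$ from below and apply MCT. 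Your route is sound for a single space, with one small correction: the increasing approximants cannot be taken to be plain sums $\sum_{j\leq k}q_jI_{U_j}$ over basis elements (such sums need not stay below $f$); they are the layer-cake rewritings of $\max_{m\leq k}q_mI_{f^{-1}(q_m,\infty]}$, namely $\sum_i(q_{(i)}-q_{(i-1)})I_{W_i}$ where each $W_i$ is a finite union of level sets. Since each $W_i$ is still c.e.~open, part~(\ref{prop:freeprep:1}) applies and the argument closes; this route also has the advantage of only needing the Markov inequality in the direction where the integrand is non-negative, whereas the backward direction of the paper's biconditional applies Markov to $f-q$, which need not be non-negative.

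The genuine gap is in (\ref{prop:freeprep:3}). The approximation scheme does not transfer to the product $\mathcal{P}(\pspace)\times\sspace$: the level sets of $g$ are c.e.~opens of the product, i.e.\ countable unions of rectangles $V_j\times W_j$, and the increasing approximants produced by the layer-cake rewriting are non-negative combinations of indicators of \emph{finite unions of rectangles}, not sums of rectangle indicators. One cannot in general rearrange a finite union of open rectangles into a non-negative sum of open rectangle indicators, because $\mathcal{P}(\pspace)$ is typically connected (e.g.\ $\mathcal{P}(\mathbb{S}_{\infty})$, the case needed for Theorem~\ref{thm:freedman}), so the rectangles cannot be disjointified in the first coordinate. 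Concretely, for $R=\bigcup_{j\leq k}(V_j\times W_j)$ one has $\int_{\sspace}I_R(\prior,\ssvar^{\prime})\,d\pmap{\psvar}(\ssvar^{\prime})=\pmape{\bigcup_{j:\,\prior\in V_j}W_j}{\psvar}$, which is not $\sum_{j\leq k}I_{V_j}(\prior)\cdot\pmape{W_j}{\psvar}$: the latter double-counts overlaps and can exceed $1$. So the displayed identity in your proof of (\ref{prop:freeprep:3}) fails for the approximants you would actually have to use. The repair is the paper's key move at this point: show directly that $\{(\prior,\psvar):\pmape{g_{\prior}^{-1}(q,\infty]}{\psvar}>s\}$ is c.e.~open by quantifying over finite subfamilies $j_1,\dots,j_m$ of the rectangle cover of $\{g>q\}$ such that $\prior\in V_{j_1}\cap\dots\cap V_{j_m}$ and $\pmape{W_{j_1}\cup\dots\cup W_{j_m}}{\psvar}>s$, using (as you do correctly establish) that $\psvar\mapsto\pmape{W}{\psvar}$ is lsc for c.e.~open $W\subseteq\sspace$; combining this with a layer-cake characterization of ``$\int_{\sspace}g_{\prior}\,d\pmap{\psvar}>r$'' then yields (\ref{prop:freeprep:3}).
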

Note that (\ref{prop:freeprep:1}) improves Proposition~\ref{prop:posteriorintegral}, since the latter just concerned a fixed computable prior.

\begin{proof}
For (\ref{prop:freeprep:1}), the hypothesis of the proposition implies that uniformly in c.e. open $U\subseteq \pspace$ and $\sigma$ in $T$, one has that $\psvar \mapsto I_U(\psvar) \cdot \pmape{\sigma}{\psvar}$ is lsc from $\pspace$ to $[0,1]$. Then uniformly in c.e. open $U\subseteq \pspace$ and $\sigma$ in $T$ one has that $\prior\mapsto \int_U  \pmape{\sigma}{\psvar} \; d\prior(\psvar)$ is lsc from $\mathcal{P}(\pspace)$ to $[0,1]$, by \cite[Proposition 4.3.1]{Hoyrup2009-pl}.

Further,  the hypothesis of the proposition, implies that $\psvar \mapsto (1- \pmape{\sigma}{\psvar})$ is also lsc from $\pspace$ to $[0,1]$, uniformly in $\sigma$ from $T$. Then uniformly in $\sigma$ from $T$ one has that $\prior\mapsto \int_{\pspace}  (1-\pmape{\sigma}{\psvar}) \; d\prior(\psvar)$ is lsc from $\mathcal{P}(\pspace)$ to $[0,1]$ by \cite[Proposition 4.3.1]{Hoyrup2009-pl}. Hence, uniformly in $\sigma$ from $T$ one has that $\prior\mapsto \int_{\pspace}  \pmape{\sigma}{\psvar} \; d\prior(\psvar)$ is usc (that is, upper semi-computable) from $\mathcal{P}(\pspace)$ to $[0,1]$. By the previous paragraph with $U=\pspace$, one has that uniformly in $\sigma$ from $T$ the map $\prior\mapsto \int_{\pspace}  \pmape{\sigma}{\psvar} \; d\prior(\psvar)$ is computable continuous from $\mathcal{P}(\pspace)$ to $[0,1]$.

To finish (\ref{prop:freeprep:1}), it suffices to note by the definition of the posterior $\poste{U}{\ssvarn{n}}$ in (\ref{eqn:post}) that for rational $q>0$ we have $\poste{U}{\ssvarn{n}}>q$ iff there is $\sigma$ in $T$ of length $n$ with $\ssvar$ in $[\sigma]$ and there is rational $r>0$ such that $\int_U  \pmape{\sigma}{\psvar} \; d\prior(\psvar)>r$ and $0< \int_{\pspace}  \pmape{\sigma}{\psvar} \; d\prior(\psvar)<\frac{r}{q}$. These inequalities are c.e. open conditions in $(\prior,\ssvar)$ by the previous two paragraphs.

For (\ref{prop:freeprep:2}), by (\ref{prop:freeprep:1}) it suffices to show that for lsc $f:\pspace\rightarrow [0,\infty]$ and rational $r\geq 0$:
\begin{equation*}\label{eqn:freeprep:2}
\int_{\pspace} f(\psvar^{\prime}) \; d\postws{\ssvarn{n}}(\psvar^{\prime})>r \mbox{ iff there is rational }q>r \mbox{ with} \; \poste{f^{-1}(q,\infty]}{\ssvarn{n}}>0\big)
\end{equation*}
The proof of this biconditional is similar to the proof of \cite[Proposition 7.1]{Huttegger2024-zb}, with $\post{\ssvarn{n}}:\sspace\rightarrow \mathcal{M}^+(\pspace)$ replacing $\rho^{(n)}_x:X\rightarrow \mathcal{M}^+(X)$, but with the difference that $\post{\ssvarn{n}}$ is a function of the prior $\prior$ in this paper, whereas $\rho^{(n)}_x$ was fixed in the other paper.

\begin{proofdetail}
    
For the sake of completeness, we include the proof of this biconditional here. We use the fact that the posterior $\post{\ssvarn{n}}$ is either the zero element of $\mathcal{M}^+(\pspace)$ or is in $\mathcal{P}(\pspace)$. 
\begin{itemize}[leftmargin=*]
    \item First suppose that $\int_{\pspace} f(\psvar^{\prime}) \; d\post{\ssvarn{n}}(\psvar^{\prime})>r$. Then the posterior $\post{\ssvarn{n}}$ is not the zero element of $\mathcal{M}^+(\pspace)$. Choose rational $q>r$ with $q< \int_{\pspace} f(\psvar^{\prime}) \; d\postws{\ssvarn{n}}(\psvar^{\prime})$. Then $\poste{f^{-1}(q,\infty]}{\ssvarn{n}}>0$ since otherwise we would have $0\leq f(\psvar^{\prime})\leq q$ for $\post{\ssvarn{n}}$-a.s. many $\psvar^{\prime}$ and then $\int_{\pspace} f(\psvar^{\prime}) \; d\post{\ssvarn{n}}(\psvar^{\prime})\leq q$.
\item Second suppose that $q>r$ satisfies $\poste{f^{-1}(q,\infty]}{\ssvarn{n}}>0$. Then the posterior $\post{\ssvarn{n}}$ is not the zero element of $\mathcal{M}^+(\pspace)$. If $f$ is not $\post{\ssvarn{n}}$-integrable then trivially we have $\int f(\psvar^{\prime}) \; d\post{\ssvarn{n}}(\psvar^{\prime}) >r$. Hence suppose $f$ is $\post{\ssvarn{n}}$-integrable. Since $\poste{f^{-1}(q,\infty]}{\ssvarn{n}}>0$, choose  $\epsilon>0$ such that $\poste{f-q>\epsilon}{\ssvarn{n}}>0$. Then $0<\poste{f-q>\epsilon}{\ssvarn{n}}\leq \frac{1}{\epsilon} \int_{\pspace} (f(\psvar^{\prime}) -q) \;  d\postws{\ssvarn{n}}(\psvar^{\prime})$. Then $0<\int_{\pspace} (f(\psvar^{\prime}) -q) \; d\postws{\ssvarn{n}}(\psvar^{\prime})$. Then $q<\int_{\pspace} f(\psvar^{\prime}) \; d\postws{\ssvarn{n}}(\psvar^{\prime})$. Then $r<\int_{\pspace} f(\psvar^{\prime}) \; d\postws{\ssvarn{n}}(\psvar^{\prime})$.
\end{itemize}

\end{proofdetail}

 For (\ref{prop:freeprep:3}), it suffices to show that for lsc $g:\mathcal{P}(\pspace)\times \sspace\rightarrow [0,\infty]$ and rational $r\geq 0$:
 \begin{equation*}\label{eqn:freeprep:3}
 \int_{\sspace} g(p,\ssvar^{\prime}) \; d\pmap{\psvar}(\ssvar^{\prime})>r \mbox{ iff there is rational }q>r \mbox{ with }  \pmape{g^{-1}_{\prior}(q,\infty]}{\psvar}>0\big)
 \end{equation*}
In the last conjunct, we let $g_{\prior}:\sspace\rightarrow [0,\infty]$ be $g_{\prior}(\ssvar)=g(\prior, \ssvar)$. Let us note that the condition $ \pmape{g^{-1}_{\prior}(q,\infty]}{\psvar}>0$ is a c.e. open condition in $\mathcal{P}(\pspace)\times \pspace$: for uniformly in $g$ and $q$, one can write $\{(\prior, \ssvar): g(\prior, \ssvar)>q\}=\bigcup_i (U_i\times V_i)$, where $U_i\subseteq \mathcal{P}(\pspace)$ and $V_i\subseteq \sspace$ are uniformly c.e. open. Then $g_{\prior}^{-1}(q,\infty] = \bigcup_{\prior\in U_i} V_i$. Then we have: $\pmape{g^{-1}_{\prior}(q,\infty]}{\psvar}>0$ iff there is $m\geq 0$ and $i_1, \ldots, i_m$ such that $\prior$ in $U_{i_1}\cap \cdots \cap U_{i_m}$ and $\pmape{V_{i_1}\cup \cdots \cup V_{i_m}}{\psvar}>0$. And this is a c.e. open condition in $\mathcal{P}(\pspace)\times \pspace$. 

The proof of the above biconditional for (\ref{prop:freeprep:3}) is similar to the previous biconditional and so we omit it.

\begin{proofdetail}

In more detail, here is the proof of the biconditional. In this proof, we use the fact that we are assuming, in this specific proposition, that the likelihood $\pmap{\psvar}$ is in $\mathcal{P}(\sspace)$ for all $\psvar$ in $\pspace$.
\begin{itemize}[leftmargin=*]
    \item First suppose that $\int_{\sspace} g(\prior, \ssvar^{\prime}) \; d\pmap{\psvar}(\ssvar^{\prime})>r$. Choose rational $q>r$ with $q<\int_{\sspace} g(\prior, \ssvar^{\prime}) \; d\pmapws{\psvar}(\ssvar^{\prime})$.  Then $\pmape{g^{-1}_{\prior}(q,\infty]}{\psvar}>0$ since otherwise we would have $0\leq g(\prior, \ssvar^{\prime})\leq q$ for $\pmap{\psvar}$-a.s. many $\ssvar^{\prime}$ and then $\int_{\sspace} g(\prior, \ssvar^{\prime}) \; d\pmap{\psvar}(\ssvar^{\prime})\leq q$.
\item Suppose that rational $q>r$ satisfies $\pmape{g^{-1}_{\prior}(q,\infty]}{\psvar}>0$. If $g_{\prior}$ is not $\pmap{\psvar}$-integrable then trivially we have $\int g(\prior, \ssvar^{\prime}) \; d\pmap{\psvar}(\ssvar^{\prime}) >r$. Hence suppose $g_{\prior}$ is $\pmap{\psvar}$-integrable. Since $\pmape{g^{-1}_{\prior}(q,\infty]}{\psvar}>0$, choose  $\epsilon>0$ such that $\pmape{g_{\prior}-q>\epsilon}{\psvar}>0$. Then $0<\pmape{g_{\prior}-q>\epsilon}{\psvar}\leq \frac{1}{\epsilon} \int_{\sspace} (g(\prior, \ssvar^{\prime}) -q) \;  d\pmap{\psvar}(\ssvar^{\prime})$. Then $0<\int_{\sspace} (g(\prior, \ssvar^{\prime}) -q) \; d\pmap{\psvar}(\ssvar^{\prime})$. Then $q<\int_{\sspace} g(\prior, \ssvar^{\prime}) \; d\pmap{\psvar}(\ssvar^{\prime})$. Then $r<\int_{\sspace} g(\prior, \ssvar^{\prime}) \; d\pmap{\psvar}(\ssvar^{\prime})$.
\end{itemize}

\end{proofdetail}

Finally, (\ref{prop:freeprep:4}) follows directly from (\ref{prop:freeprep:2})-(\ref{prop:freeprep:3}).

\begin{proofdetail}
In more detail:

 For (\ref{prop:freeprep:4}) suppose $f:\pspace\rightarrow [0,\infty]$ is lsc and $n\geq 0$. Let $g_n(\prior,\ssvar) = \int_{\pspace} f(\psvar^{\prime}) \; d\post{\ssvarn{n}}(\psvar^{\prime})$, which is lsc from $\mathcal{P}(\pspace)\times \sspace$ to $[0,\infty]$ uniformly in $n\geq 0$ and $f$ by (\ref{prop:freeprep:2}). Further, $(\prior,\psvar)\mapsto \int_{\sspace} g_n(p,\ssvar^{\prime}) \; d\pmap{\psvar}(\ssvar^{\prime})$ is lsc from  $\mathcal{P}(\pspace)\times \pspace$ to $[0,\infty]$, uniformly in $n\geq 0$ and $f$ by (\ref{prop:freeprep:3}). By substituting the definition of $g_n$, we have $(\prior, \psvar)\mapsto \int_{\sspace} \int_{\pspace} f(\psvar^{\prime}) \; d\post{\ssvarn{n}}(\psvar^{\prime}) \;d\pmap{\psvar}(\ssvar)$ is lsc from  $\mathcal{P}(\pspace)\times \pspace$ to $[0,\infty]$, uniformly in $n\geq 0$ and $f:\pspace\rightarrow [0,\infty]$.    
\end{proofdetail}
 
\end{proof}

\section{Effective Freedman inconsistency}\label{sec:efincon}

We begin with the proof of Theorem~\ref{thm:freedman}, which effectivizes the proof in \cite[Theorem 6.12]{Ghosal2017-jk}. In this theorem, recall that the parameter space $\pspace$ is  $\mathbb{S}_{\infty}$, and so we just write $\mathbb{S}_{\infty}$ instead of $\pspace$ in this section.

Note that we have the formula $\pmape{\sigma}{\psvar} = \prod_{i<\left|\sigma\right|} \psvar(\sigma(i))$ for the likelihood, and that $\psvar \mapsto \prod_{i<\left|\sigma\right|} \psvar(\sigma(i))$ is computable continuous from $[0,1]^{\mathbb{N}}$ to $[0,1]$. By Proposition~\ref{prop:sinfty} and Remark~\ref{rmk:howused}(\ref{rmk:howused:1}), we have that $\psvar \mapsto \pmape{\sigma}{\psvar}$ is computable continuous from $\mathbb{S}_{\infty}$ to $[0,1]$, which puts us in a position to later apply Proposition~\ref{prop:freeprep}.

Further, $\mathbb{S}_{\infty}^+:=\{\psvar\in \mathbb{S}_{\infty} : \forall \; i\geq 0\; \psvar(i)>0\}$ is effectively comeager in $\mathbb{S}_{\infty}$. For, the set $\{\psvar\in \mathbb{S}_{\infty} : \psvar(i)>0\}$ is c.e. open and dense for each $i\geq 0$: for, it is $\pi_i^{-1}((0,1])$, where $\pi_i:\mathbb{S}_{\infty}\rightarrow [0,1]$ is the projection map, which is computable continuous and computable open by Proposition~\ref{prop:sinftyrojection}. By Baire Category Theorem, $\mathbb{S}_{\infty}^+$ is dense in $\mathbb{S}_{\infty}$. By the Effective Baire Category Theorem (\cite[Theorem 5.1.2]{Hoyrup2009-pl}) we can find a uniformly computable sequence of points $\psvar_0^+, \psvar_1^+, \ldots$ in $\mathbb{S}_{\infty}^+$ which is dense in $\mathbb{S}_{\infty}$.

Further, $\mathbb{S}_{\infty}^-=\{\psvar\in \mathbb{S}_{\infty} : \exists \; i\geq 0\; \psvar(i)=0\}$ is dense since it is a superset of the countable dense set. 

Finally, let $D_i$ be the set of $\sum_{j=1}^m q_j \delta_{\psvar_j}$, where $0<q_j<1$ is rational and $\sum_{j=1}^m q_j =1$, and $\psvar_1=\psvar^+_i$ and for $1<j\leq m$ one has that $\psvar_j$ is from $\mathbb{S}_{\infty}^-$. It is not hard to see, by a simple classical argument, that this is dense in $\mathcal{P}(\mathbb{S}_{\infty})$.

\begin{proofdetail}

 Here is the classical argument. For, suppose that $\nu$ in $\mathcal{P}(\mathbb{S}_{\infty})$ is given. We can find a point of the form $\sum_{j=2}^m q_j \delta_{\psvar_j}$, where $0<q_j<1$ is rational and $\sum_{j=2}^m q_j =1$ and $\psvar_j$ is from the countable dense set of $\mathbb{S}_{\infty}$, which is arbitrarily close to $\nu$, since these finite rational averages of Diracs are dense in $\mathcal{P}(\mathbb{S}_{\infty})$ (indeed, this is the traditional choice of a countable dense set in $\mathcal{P}(\mathbb{S}_{\infty})$). Further, since $\mathbb{S}_{\infty}^-$ is a superset of the countable dense set, we have that $\psvar_j$ for $2\leq j\leq m$ is from  $\mathbb{S}_{\infty}^-$. As $\epsilon\rightarrow 0$, one has that $\big(\epsilon \delta_{\psvar_i^+}+(1-\epsilon)\cdot \sum_{j=2}^m q_j \delta_{\psvar_j}\big)\rightarrow \sum_{j=2}^m q_j \delta_{\psvar_j}$ in $P(\mathbb{S}_{\infty})$. Then by taking $\epsilon$ a sufficiently small rational, we are done.

\end{proofdetail}

Then one has:
\begin{prop}\label{prop:helperpost}
Let $\psvar_0$ be from $\mathbb{S}_{\infty}^+$. Let $\prior$ be from $D_i$. Then for $\pmap{\psvar_0}$-a.s. many $\ssvar$ in $\sspace$ one has $\post{\ssvarn{n}}\rightarrow \delta_{\psvar_i^+}$ in $\mathcal{P}(\mathbb{S}_{\infty})$.
\end{prop}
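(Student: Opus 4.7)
The plan is to exploit the atomic structure of the prior. Since $\prior \in D_i$ has the form $\sum_{j=1}^m q_j \delta_{\psvar_j}$ with $\psvar_1 = \psvar_i^+$, the posterior defined in (\ref{eqn:post}) becomes a finite convex combination of the same Diracs, with Bayes weights proportional to $q_j \cdot \pmape{\ssvarn{n}}{\psvar_j}$. The first step is to observe that the denominator $\int_{\mathbb{S}_{\infty}} \pmape{\ssvarn{n}}{\psvar} \, d\prior(\psvar) \geq q_1 \cdot \pmape{\ssvarn{n}}{\psvar_i^+} > 0$, since $\psvar_i^+ \in \mathbb{S}_{\infty}^+$ makes every coordinate of $\psvar_i^+$ positive and hence $\pmape{\ssvarn{n}}{\psvar_i^+} = \prod_{\ell < n} \psvar_i^+(\ssvar(\ell)) > 0$ for every string $\ssvarn{n}$. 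So $\post{\ssvarn{n}}$ is a bona fide probability measure on $\mathbb{S}_{\infty}$ for all $n$ and all $\ssvar$.

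Next, I would kill the competing atoms one at a time. For each $j \in \{2,\ldots,m\}$, choose $k_j \geq 0$ with $\psvar_j(k_j) = 0$, which is possible since $\psvar_j \in \mathbb{S}_{\infty}^-$. Because $\psvar_0 \in \mathbb{S}_{\infty}^+$ we have $\psvar_0(k_j) > 0$, and under $\pmap{\psvar_0}$ the coordinates $\ssvar(\ell)$ are i.i.d.\ with law $\psvar_0$ on $\mathbb{N}$. Hence the probability that $\ssvar(\ell) \neq k_j$ for all $\ell < n$ equals $(1 - \psvar_0(k_j))^n \to 0$ as $n \to \infty$. Thus for $\pmap{\psvar_0}$-a.s. many $\ssvar$, there exists $N_j$ with $\ssvar(\ell) = k_j$ for some $\ell < N_j$. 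Intersecting over the finitely many $j$ yields a set of $\pmap{\psvar_0}$-measure one on which $N := \max_{j>1} N_j$ works simultaneously for all $j > 1$.

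For $\ssvar$ in this set and $n \geq N$, each product $\pmape{\ssvarn{n}}{\psvar_j} = \prod_{\ell < n} \psvar_j(\ssvar(\ell))$ with $j > 1$ contains the factor $\psvar_j(k_j) = 0$ and so vanishes. The Bayes formula then collapses: every term indexed by $j > 1$ in the numerator and denominator is zero, leaving $\post{\ssvarn{n}} = \delta_{\psvar_i^+}$ exactly. So on this a.s.\ set the sequence $\post{\ssvarn{n}}$ is eventually constantly $\delta_{\psvar_i^+}$, which certainly converges to $\delta_{\psvar_i^+}$ in $\mathcal{P}(\mathbb{S}_{\infty})$. There is no real obstacle; the one thing to verify with care is that the denominator of the posterior never vanishes, which is precisely why we placed $\psvar_i^+$ (rather than an arbitrary point of the countable dense set) as the first atom of the prior in the definition of $D_i$.
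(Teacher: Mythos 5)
Your proof is correct, and the central idea is the same as the paper's: each competing atom $\psvar_j$ ($j>1$) lies in $\mathbb{S}_{\infty}^-$, so it assigns probability zero to some symbol $k_j$ that $\pmap{\psvar_0}$ emits with positive probability, and once that symbol appears the likelihood $\pmape{\ssvarn{n}}{\psvar_j}$ vanishes for all later $n$, collapsing the posterior onto $\delta_{\psvar_i^+}$. Where you differ is in the probabilistic and topological machinery. The paper works with the tail event that each $k_j$ occurs \emph{infinitely often}, invokes the Kolmogorov zero--one law plus a contradiction with the independence estimate $\prod_{j=2}^m \psvar_0(k_j)>0$ to show this event has full measure, and then runs a Portmanteau $\liminf$ argument on open neighborhoods of $\psvar_i^+$ to get weak convergence. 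You instead observe that a \emph{single} occurrence of each $k_j$ suffices, compute directly that $\pmap{\psvar_0}(\forall\,\ell<n\;\ssvar(\ell)\neq k_j)=(1-\psvar_0(k_j))^n\to 0$, and conclude that a.s.\ the posterior is \emph{eventually exactly} $\delta_{\psvar_i^+}$, which trivially implies weak convergence. Your route is more elementary (no zero--one law, no Portmanteau) and yields a strictly stronger a.s.\ conclusion (eventual equality rather than convergence); the paper's tail-set formulation costs a little more but packages the full-measure set $A$ in a form it can quantify over uniformly in $\ell$ for its subsequent contradiction argument. Your observation that the denominator $\int_{\mathbb{S}_{\infty}}\pmape{\ssvarn{n}}{\psvar}\,d\prior(\psvar)\geq q_1\cdot\pmape{\ssvarn{n}}{\psvar_i^+}>0$ because $\psvar_1=\psvar_i^+\in\mathbb{S}_{\infty}^+$ is exactly the point the paper also relies on, and the degenerate case $m=1$ is handled implicitly by your argument since the set of competing atoms is then empty.
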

\begin{proof}
Let $\prior = \sum_{j=1}^m q_j \delta_{\psvar_j}$, where this is as in the definition of $D_i$ in the previous paragraph. The non-trivial case is $m>1$. For each $1<j\leq m$, choose $k_j\geq 0$ with $\psvar_j(k_j)=0$. Let $\epsilon = \prod_{j=2}^m \psvar_0(k_j)$, which since  $\psvar_0$ is in $\mathbb{S}_{\infty}^+$ is necessarily $>0$. Let $A=\bigcap_{\ell} A_{\ell}$, where $A_{\ell} = \{ \ssvar\in \sspace: \forall \; 1<j\leq m \; \exists \; n\geq \ell \; \ssvar(n)=k_j\}$. Then $A$ is a tailset, i.e. if $\ssvar, \ssvar^{\prime}$ differ by finitely much and one is in $A$, then the other is as well. Since $\pmap{\psvar_0}$ is a product measure, by the zero-one law, $A$ either has measure zero or one. We claim it has measure one. If not, then it has measure zero. Then choose sufficiently large $\ell\geq 0$ such that $\pmape{A_{\ell}}{\psvar_0}<\epsilon$. Consider the event $B=\{\ssvar: \ssvar(\ell+2)=k_2, \ssvar(\ell+3)=k_3, \ldots, \ssvar(\ell+m)=k_m\}$. Then by independence we have $\pmape{B}{\psvar_0}= \prod_{j=2}^m \psvar_0(k_j)$, which is $\epsilon$, a contradiction to $B$ being a subset of $A_{\ell}$. Hence rather we have that $\pmape{A}{\psvar_0}=1$.

By Portmanteau it suffices to check that for all $\ssvar$ in $A$ and all open $U$ containing $\psvar_1$ one has $\liminf_n \poste{U}{\ssvarn{n}}=1$. Suppose not. Then there is $\ssvar$ in $A$ and open $U$  containing $\psvar_1$ such that $\liminf_n \poste{U}{\ssvarn{n}}< 1$. Then there is $\ell_0\geq 0$ such that for all $\ell\geq \ell_0$ one has $\inf_{n\geq \ell} \poste{U}{\ssvarn{n}}< 1$. Since $\ssvar$ is in $A_{\ell_0}$, choose $\ell>\ell_0$ such that for all $1<j\leq m$ one has that $\ssvar(\ell^{\prime})=k_j$ for some $\ell^{\prime}$ in the interval $[\ell_0, \ell)$. This has consequence that for all $n\geq \ell$ one has that $\pmape{\ssvarn{n}}{\psvar_j}=0$ for all $1<j\leq m$. Hence for any $n\geq \ell$ and any event $C$, we have that 
\begin{equation*}
\int_C \pmape{\ssvarn{n}}{\psvar} \; d\prior(\psvar) =\sum_{j=1}^m q_j \cdot I_C(\psvar_j) \cdot \pmape{\ssvarn{n}}{\psvar_j} = q_1 \cdot I_C(\psvar_1) \cdot \pmape{\ssvarn{n}}{\psvar_1}=q_1 \cdot I_C(\psvar_i^+) \cdot \pmape{\ssvarn{n}}{\psvar_i^+}
\end{equation*}
When we set $C=\mathbb{S}_{\infty}$, then we get a value $>0$ since $\psvar_i^+$ comes from $\mathbb{S}_{\infty}^+$. For $n\geq \ell$:
\begin{equation*}
\poste{U}{\ssvarn{n}}= \frac{\int_U \pmape{\ssvarn{n}}{\psvar}\; d\prior(\psvar)}{\int_{\mathbb{S}_{\infty}} \pmapews{\ssvarn{n}}{\psvar}\; d\prior(\psvar)} = \frac{ q_1 \cdot I_U(\psvar_i^+) \cdot \pmape{\ssvarn{n}}{\psvar_i^+}   }{q_1 \cdot \pmape{\ssvarn{n}}{\psvar_i^+} }=I_U(\psvar_i^+)=1
\end{equation*}
and this contradicts that  $\inf_{n\geq \ell} \poste{U}{\ssvarn{n}}< 1$.

\end{proof}

For each $i\geq 0$ and each $\psvar_i^+$ on our list, we construct a uniformly computable continuous sequence $\chi_{i,k}: \mathbb{S}_{\infty} \rightarrow [0,1]$ such that (i) $\chi_{i,k}(\psvar_i^+)=1$ for all $k\geq 0$, and (ii) if $\psvar^+_i$ in open $V$, then eventually  $\chi_{i,k}\leq I_V$. Let $\epsilon_k\rightarrow 0$ be a computable sequence of positive rationals which is strictly decreasing. Let $V_{i,k} = B(\psvar^+_i, \epsilon_k)$, noting that $\overline{V_{i,k+1}}\subseteq B[\psvar^+_i, \epsilon_{k+1}]\subseteq B(\psvar^+_i, \epsilon_k)=V_{i,k}$. Since $B[\psvar^+_i, \epsilon_{k+1}]$ and $\mathbb{S}_{\infty} \setminus B(\psvar^+_i, \epsilon_k)$ are effectively closed and disjoint, by the effective Uryshon Lemma (\cite[Lemma II.7.(1), Lemma II.7.3 p. 90]{Simpson2009aa}) there are uniformly computable continuous $\chi_{i,k}: \mathbb{S}_{\infty} \rightarrow [0,1]$ such that $\chi_{i,k}\upharpoonright B[\psvar^+_i, \epsilon_{k+1}]=1$ and $\chi_{i,k}\upharpoonright (\mathbb{S}_{\infty} \setminus B(\psvar^+_i, \epsilon_k))=0$. Then $\chi_{i,k}\upharpoonright V_{i,k+1}=1$ and  $\chi_{i,k}\upharpoonright (\mathbb{S}_{\infty} \setminus V_{i,k})=0$.

By Proposition~\ref{prop:freeprep}(\ref{prop:freeprep:4}), the following are c.e. opens, where $0<\epsilon<1$ rational: 
\begin{equation}
U_{i,\epsilon,m,k}=\bigcup_{n\geq m} \{ (\prior, \psvar): \int_{\sspace} \int_{\mathbb{S}_{\infty}} \chi_{i,k}(\psvar^{\prime}) \; d\post{\ssvarn{n}} (\psvar^{\prime}) \; d\pmap{\psvar}(\ssvar) > 1-\epsilon\}
\end{equation}

We show that each of these c.e. opens is dense in $\mathcal{P}(\mathbb{S}_{\infty})\times \mathbb{S}_{\infty}$. Suppose that a pair in $\mathcal{P}(\mathbb{S}_{\infty})\times \mathbb{S}_{\infty}$ is given. Find $(\prior, \psvar_0)$ in $D_i\times \mathbb{S}_{\infty}^+$ arbitrarily close to the given pair. We argue that the pair $(\prior, \psvar_0)$ is in $U_{i,\epsilon,m,k}$. By Proposition~\ref{prop:helperpost}, one has that $\post{\ssvarn{n}}\rightarrow \delta_{\psvar_i^+}$ for $\pmap{\psvar_0}$-a.s. many $\ssvar$ in $\sspace$. By the Portmanteau Theorem, we have $\int_{\mathbb{S}_{\infty}} \chi_{i,k}(\psvar^{\prime})\; d\post{\ssvarn{n}}(\psvar^{\prime})\rightarrow \int_{\mathbb{S}_{\infty}} \chi_{i,k}(\psvar^{\prime})\;d\delta_{\psvar_i^+}(\psvar^{\prime})$ for $\pmap{\psvar_0}$-a.s. many $\ssvar$ in $\sspace$. By the property (i) of $\chi_{i,k}$ above, we have $\int_{\mathbb{S}_{\infty}} \chi_{i,k}(\psvar^{\prime})\;d\delta_{\psvar_i^+}(\psvar^{\prime})= \chi_{i,k}(\psvar_i^+)=1$. Hence $\int_{\mathbb{S}_{\infty}} \chi_{i,k}(\psvar^{\prime})\; d\post{\ssvarn{n}}(\psvar^{\prime})\rightarrow 1$ for $\pmap{\psvar_0}$-a.s. many $\ssvar$ in $\sspace$. By applying DCT with respect to $\pmap{\psvar_0}$, we have  $\lim_n \int_{\sspace} \int_{\mathbb{S}_{\infty}} \chi_{i,k}(\psvar^{\prime})\; d\post{\ssvarn{n}}(\psvar^{\prime}) \; d\pmap{\psvar_0}(\ssvar)=1$. This then puts the pair $(\prior, \psvar_0)$ in $U_{i,\epsilon,m,k}$. Thus indeed $U_{i,\epsilon,m,k}$ is dense. 

Let $V\subseteq \mathbb{S}_{\infty}$ be non-empty c.e. open and let $0<\epsilon<1$ be rational and consider \begin{equation}
  \{ (\prior, \psvar): \limsup_n \int_{\sspace} \int_{\mathbb{S}_{\infty}} I_V(\psvar^{\prime}) \; d\post{\ssvarn{n}} (\psvar^{\prime}) \;\; d\pmap{\psvar}(\ssvar) >1-\epsilon\} \label{eqn:thesetv} 
\end{equation}
Suppose $(\prior,\psvar)$ is in $\bigcap_{i,\epsilon, m,k} U_{i,\epsilon, m,k}$. We argue that $(\prior,\psvar)$ is in the set in (\ref{eqn:thesetv}). Since $V$ is non-empty, there is $\psvar^+_i$ in $V$. Then by the property (ii) of $\chi_{i,k}$ above we have that there is $k_0\geq 0$ such that for all $k\geq k_0$ we have $\chi_{i,k}\leq I_V$. Then we are done since $(\prior, \psvar)$ is in $U_{i,\epsilon, m,k}$ for all $m\geq 0$ and all $k\geq k_0$.

Then by Proposition~\ref{prop:thelprop}, we have finished proving Theorem~\ref{thm:freedman}. 

It remains to prove Corollary~\ref{cor:freedman}. By the Effective Baire Category Theorem (\cite[Theorem 5.1.2]{Hoyrup2009-pl}) and Theorem~\ref{thm:freedman}, in any non-empty c.e. open in $\mathcal{P}(\mathbb{S}_{\infty})\times \mathbb{S}_{\infty}$ we can find a computable point $(\prior, \psvar_0)$ in $\bigcap_{i,\epsilon, m,k} U_{i,\epsilon, m,k}$. Using the end of the proof of Theorem~\ref{thm:freedman}, starting at (\ref{eqn:thesetv}), together with Proposition~\ref{prop:thelprop} specialized to the computable prior $\prior$, we further get that $(\prior, \psvar_0)$ is not computably consistent. Hence we have finished part (\ref{cor:freedman:1}). And part (\ref{cor:freedman:2}) follows from Proposition~\ref{prop:sinftyprob} and Theorem~\ref{thm:doob}.

\bibliographystyle{alpha}
\bibliography{00-bib}

\end{document}